\newcommand{\lt}{\left}
\newcommand{\rt}{\right}
\theoremstyle{thmstyleone}%
\newtheorem{theorem}{Theorem}
\newtheorem{proposition}[theorem]{Proposition}%
\newtheorem{suppproposition}[theorem]{Proposition F\hspace{-0.3em}}%
\newtheorem{lemma}[theorem]{Lemma}
\theoremstyle{thmstyletwo}%
\theoremstyle{thmstylethree}%
\newcommand{\yk}[1]{{\color{blue}#1}}
\begin{document}
\thispagestyle{empty}
\newgeometry{top=1in,bottom=1in,right=1.25in,left=1.25in}

\title[Article Title]{Efficient mathematical programming formulation and algorithmic framework for optimal camera placement}

\author*[1]{\fnm{Yash} \sur{Kumar}}\email{yashkumar1803@utexas.edu}

\author[1]{\fnm{Raghu} \sur{Bollapragada}}

\author[1]{\fnm{Benjamin} D. \sur{Leibowicz}}

\affil[1]{\orgdiv{Operations Research and Industrial Engineering, Walker Department of Mechanical Engineering}, \orgname{The University of Texas at Austin}, \orgaddress{\street{204 E Dean Keeton St}, \city{Austin}, \state{TX} \postcode{78712}, \country{USA}}}

\abstract{Optimal camera placement plays a crucial role in applications such as surveillance, environmental monitoring, and infrastructure inspection. Even highly abstracted versions of this problem are NP-hard due to the high-dimensional continuous domain of camera configurations (i.e., positions and orientations) and difficulties in efficiently and accurately calculating camera coverage. In this paper, we present a novel framework for optimal camera placement that uses integer programming and adaptive sampling strategies to maximize coverage, given a limited camera budget. We develop a modified maximum $k$-coverage formulation and two adaptive sampling strategies, Explore and Exploit (E\&E) and Target Uncovered Spaces (TUS), that iteratively add new camera configurations to the candidate set in order to improve the solution. E\&E focuses on local search around camera configurations chosen in previous iterations, whereas TUS focuses specifically on covering regions that were previously uncovered. We first conduct theoretical analysis to provide bounds on the probability of finding an optimal solution and expected sampling needs, while ensuring monotonic improvements in coverage. Then, we conduct a detailed numerical analysis over different environments. Results show that E\&E achieves coverage improvements of 3.3\textendash16.0\% over all baseline random sampling approaches, while maintaining manageable computational times. Meanwhile, TUS performs well in open environments and with tight camera budgets, achieving gains of 6.9\textendash9.1\% in such conditions. Compared to the baseline, our approach achieves similar coverage using only 30\textendash70\% of the sampling budget, demonstrating its computational efficiency. Through a case study, we obtain insights into optimal camera placement decisions for a typical indoor surveillance application.}

\keywords{Camera placement, Adaptive sampling, Simulation optimization, Set covering problem}

\newpage

\maketitle

\section*{Acknowledgments}
This work was supported by the Office of the Vice President for Research, Scholarship and Creative Endeavors at The University of Texas at Austin. The authors would like to thank Qixing Huang for helpful input on the study design and problem definition.

\thispagestyle{empty}

\newpage

\section{Introduction} \label{section:1-introduction}
\setcounter{page}{1}

The problem of optimizing camera placement in three-dimensional (3D) environments is important for various applications such as surveillance \citep{kritter_optimal_2019, bodor_optimal_2007}, environmental monitoring \citep{fuentes_method_2020, chaudhary_observing_2017, ali_optimal_2021}, and infrastructure inspection \citep{gai_using_2021, zarzycki_leveraging_2024, yang_computeraided_2018, khaloo_hierarchical_2015, kim_systematic_2019, bai_assessment_2024}. The objective is to determine the optimal positions and orientations of cameras to maximize the free space visible to at least one camera in the camera network, where a camera network is defined as a set of camera configurations which is feasible to the original problem. Even highly simplified approximations of this problem are NP-hard due to the high-dimensional continuous domain of camera configurations and difficulties in efficiently and accurately calculating camera coverage.

The costs incurred for deploying and maintaining camera systems are an important consideration. High-grade security cameras, especially those with advanced features, such as high resolution, night vision, and motion detection, are expensive. Additionally, installation including wiring and labor needs, further escalates overall costs. Moreover, ongoing video monitoring, video review, and network maintenance require continuous human or automated oversight, adding to the operational expenses. Furthermore, there may be use cases where the same camera needs to be repositioned to take multiple images, with time becoming a limiting factor. Given these constraints, optimizing camera placement becomes imperative to ensure that the maximum possible coverage is achieved using a limited budget of cameras. Efficient camera placement strategies can significantly enhance feasibility for widespread use, as they reduce initial capital expenditure and recurring cost components. Thus, research efforts focus on developing efficient algorithms that identify the optimal positioning of cameras to cover the largest space with the least expenditure.

We develop an efficient framework for camera placement that possesses certain key characteristics making it effective in real-world applications. Firstly, it is generalizable to a large set of environments, ensuring its applicability across different scenarios such as urban spaces, natural landscapes, or indoor spaces. Additionally, we ensure fast solution times allowing the algorithm to be implemented and operational with minimal delay. This is especially important in scenarios where quick deployment is crucial. Moreover, our algorithm's robustness to environmental changes allows for efficient redeployment in response to minor environmental shifts, enabling it to quickly improve upon its previous solutions rather than starting anew. Also, our algorithm is capable of traversing a large search space efficiently, exploring numerous potential camera configurations to identify the optimal placements. Finally, it can calibrate to high precision, ensuring that the system can fine-tune camera positions and angles to maximize coverage and minimize blind spots. With these characteristics, our algorithm provides a comprehensive and practical approach to the problem of optimal camera placement.

Our algorithm for solving the optimal camera placement problem comprises three main steps in a simulation optimization framework. First, the pre-processing step prepares the 3D polygonal mesh of the environment by adding a voxel grid format suitable for the optimization algorithm. Second, the camera placement algorithm utilizes an adaptive sampling algorithm that iteratively refines the current camera network by identifying configurations that best improve coverage, either by adjusting existing positions or sampling new ones not covered by previous iterations. There are two different types of sampling algorithms that we develop for this purpose. Finally, we utilize an optimization framework which finds the best positions based on the set of sampled positions. The algorithm iteratively updates the sampled positions, improving coverage continuously. There are two optimization frameworks to select the best camera configurations amongst the given samples to find the best camera network: (a) a greedy algorithm that incrementally selects the best camera configurations from the sample provided and (b) an integer program (IP) that selects the provably optimal subset from the sample set.

We introduce two novel adaptive sampling strategies: the Explore and Exploit (E\&E) strategy and the Target Uncovered Spaces (TUS) strategy. Both strategies aim to maximize coverage while balancing constraints such as camera budgets and partial visibility due to obstructions. In our results, the E\&E strategy demonstrates robust performance, consistently outperforming the random sampling (RS) baseline with coverage improvements ranging from 3.3\% to 16.0\%, while maintaining manageable computational times. Its blend of local and global search allows it to adapt to various room layouts and resource budgets effectively. The TUS strategy is effective in environments with unobstructed spaces with limited resource budgets, providing coverage gains between 6.9\% and 9.1\% in these scenarios. Theoretical analysis further provides bounds on the probability of finding an optimal solution and expected sampling needs, while allowing us to ensure monotonic improvements in coverage.

Case studies of real-world indoor surveillance applications reveal practical insights, showing that conventional strategies like placing cameras at room corners or along edges can be suboptimal. Instead, positioning cameras along the faces of walls and near ceilings can yield better coverage. With limited resources, central placements along corridors prove more effective, while high-resource scenarios benefit from targeting open spaces with peripheral vision directed at corridors and doorways. These findings refine our understanding of optimal camera placement, offering valuable guidance for practical implementations.

The remainder of this paper is organized as follows. Section \ref{section:2-literature} provides a review of the most relevant literature and clarifies our primary novel contributions. In Section \ref{section:3-problem-description}, the exact problem description and mathematical programming formulation are described. In Section \ref{section:4-solution-algorithms}, we describe the solution algorithms, i.e., the adaptive sampling strategies used by our algorithm. In Section \ref{section:5-theoretical-analysis}, we develop consistency results for the algorithm and prove monotonic improvements in the optimization framework. In Section \ref{section:6-case-study}, we develop a case study and obtain unique insights into how the two solution algorithms work on a large-scale, real-world problem. Section \ref{section:7-algo-perform} presents results on algorithm performance and then Section \ref{section:8-conclusions} summarizes conclusions, including valuable directions for related future work. Some of the algorithmic pseudocodes, and comprehensive notation for the various parts of the algorithmic framework, are described in the Appendix.

\section{Literature review}\label{section:2-literature}

This literature review examines the optimal camera placement problem, which aims to minimize the number of camera positions required to monitor complex spaces under practical constraints. The review begins by discussing the foundational concepts of the art gallery problem, similar to the optimal camera placement problem, including its theoretical underpinnings. Next, it introduces unique challenges pertaining to the camera placement problem, such as limited fields of view, range restrictions, and complex 3D environments, which significantly expand the problem’s complexity beyond simplistic and traditional 2D models. The review then dives into the set covering combinatorial framework which is highly related to the structure of the optimal camera placement problem. We discuss frameworks that efficiently solve set covering problems and utilize its variants to solve similar problems. The review then delves into computational strategies, presenting both visibility calculation techniques and methods for selecting optimal camera configurations from sampled positions. Subsequently, various algorithmic approaches are evaluated, including combinatorial optimization and heuristic-based methods, each addressing the computational challenges posed by real-world applications.  Finally, our approach is presented and compared with existing models, highlighting recent advancements and adaptive strategies that enhance coverage in dynamic, high-dimensional environments, setting the stage for further innovations in this field.

The optimal camera placement problem is similar to another well-established and widely studied problem, the art gallery problem \citep{chvatal_combinatorial_1975}, wherein the objective is to find the minimum number of stationary guards, each with panoptic visibility, capable of observing the whole art gallery. In a geometric setting, the objective of the art gallery problem corresponds to finding the minimum number of position vectors such that a line segment can be drawn from at least one of them to each point within the polygon's interior, ensuring that the entire line segment remains inside the polygon. Victor Klee introduced it and a bounded solution was provided by Chvatal in 1973 \citep{chvatal_combinatorial_1975, honsberger_mathematical_1976}. The problem was recently proven to be $\exists \mathbb{R}$-complete by Abrahamsen et al., which is a superset of NP \citep{abrahamsen_art_2022}. Although the literature for the 2D version of the problem is extensive, there have been fewer papers researching theoretical properties and exact algorithms \citep{bottino_towards_2009, ghosh_approximation_2010, kranakis_brief_nodate, marzal_three-dimensional_2012, nishizeki_lower_1979, grunbaum_polytopal_1975, orourke_art_1987} for the 3D version of the problem. The optimal camera placement problem generalizes the art gallery problem by incorporating additional constraints specific to cameras, such as limited fields of view (FOV) rather than assuming panoptic visibility. The objective is to position cameras so that their combined viewing frustums cover a designated space, which could be a room, open area, or complex environment. This problem often includes restrictions on camera resolution, depth of field (DOF), and the ability to track moving objects. Variants exist that further increase complexity, such as dynamic conditions like shifting lighting sources, redundant coverage requirements, and network protection constraints. Figure \ref{fig1:camera-terms} provides a brief explanation of the viewing frustum and other technical camera-related terms that will be utilized in our discussion. Theoretical discussions on the properties of camera placement problems are limited in the literature, with one such discussion found in \cite{cheng_time-optimal_2008}. This is mostly due to the lack of simplifying, valid assumptions. Thus, it is more common to use efficient heuristics to solve the optimal camera placement problem.

The space is typically modeled using a discrete approach due to the computational and modeling challenges associated with a purely continuous approach. The problem generally has two aspects: the first is visibility calculation for a given camera configuration, and the second is selecting a subset of camera configurations from a set of sampled camera configurations. Several algorithms exist to address the first aspect \citep{angella_optimal_2007, penha_coverage_2013, moller_fast_1997, murray_coverage_2007, yaagoubi_hybvor_2015}. Improvements in computational hardware and software have allowed fast visibility calculations to be performed with some approaches utilizing specialized software from gaming and 3D graphics to accelerate this process \citep{angella_optimal_2007, penha_coverage_2013}. Another focuses purely on algorithmic improvements, such as efficient ray-tracing algorithms \citep{moller_fast_1997}. In \citep{andersen_wireless_2009}, visibility coverage is calculated by randomly sampling free space points and checking their visibility from a camera configuration in the network. Two papers have used geographic information system (GIS) software to implement novel algorithms for visibility calculations \citep{murray_coverage_2007, yaagoubi_hybvor_2015}. Our solution employs a novel flood-filling approach for visibility calculation, along with software acceleration using Python's Numba. Typically visibility calculation is broken up into two parts: ensuring that a free space is within the field of view and range of a camera, and ensuring that no obstacle obstructs the free space. Obstacle checking is time-consuming. Our method utilizes the idea that the visibility polygon is one single connected component to reduce the calculations required for obstacle checking. To the best of our knowledge, no previous research has utilized this method. Visibility checking has been highly optimized. Despite significant computational bottlenecks, research on efficient sample camera configuration selection is sparse. There are two main methods to solve this.

Selecting an optimal subset of camera configurations from sampled positions often aligns with the set-covering problem, a combinatorial and strongly NP-hard optimization problem \citep{balas_set-covering_1972, caprara_algorithms_2000}. Researchers have proposed various methods to solve the set-covering problem, including genetic algorithms \citep{solar_parallel_2002}, greedy heuristics \citep{alihodzic_exact_2020}, and Lagrangean-based approaches for sensor networks \citep{jarray_lagrangean-based_2013}. Methods differ in their emphasis on cost \citep{bautista_grasp_2007}, coverage \citep{costa_enhancing_2017}, or dynamic conditions like sensor movement \citep{chrissis_dynamic_1982}. Some specialized variants focus on correlated cost and coverage structures \citep{brusco_morphing_1999}, or penalty-based variants \citep{carrabs_solving_2024}. Recent work also considers redundancy in visual fields and its impact on optimal placement in continuous spaces \citep{costa_enhancing_2017}. For applications like environmental monitoring, the focus may shift to maximizing network lifetime \citep{castano_column_2014} and information transmission efficiency \citep{elloumi_optimization_2021, rebai_sensor_2015}. Some recent research has adapted these traditional frameworks for novel applications, like multistatic sonar networks \citep{thuillier_efficient_2024}.

Despite the set-covering literature being rich, few have utilized set-covering algorithmic frameworks to solve the optimal camera placement problem. Some of these will be discussed here. \cite{erdem_automated_2006} directly use a mixed-integer programming solver to find the optimal configurations. \cite{angella_optimal_2007} and \cite{alihodzic_exact_2020} employ a greedy heuristic to solve the set-covering part, while \cite{penha_coverage_2013} utilize an exact algorithm. \cite{rebai_exact_2016} address a bi-objective problem, incorporating both coverage maximization and a cost component in the objective. \cite{kenichi_yabuta_optimum_2008} consider a variant of the set-covering model, the maximum $k$-coverage model, and develop a heuristic that adds a new camera if the marginal reward exceeds a certain threshold.

The second set of algorithms that researchers have developed to select a good subset of camera configurations is based on derivative-free optimization (DFO) methods, where a camera network is chosen as an initial solution and iteratively improved. In \cite{penha_coverage_2013}, the visibility space is modeled as a graph with nodes representing randomly sampled open spaces and undirected arcs representing cross-visibility between connected nodes. An initial solution of $k$ cameras is chosen, and a genetic algorithm is used to improve coverage at each iteration. In \cite{aissaoui_designing_2018}, a similar model is considered, with objective design specifically focusing on the image quality of human motion capture. The genetic algorithm is implemented directly on the camera network solution of $n$ cameras. Each member of the population constitutes an entire camera network, each with $n$ position and rotation vectors. Adaptive mutations and crossovers are iteratively implemented on the entire network vector to iteratively improve the solution using the `elitism' criteria. A similar model was implemented in \cite{indu_optimal_2009}, but focusing on a priority-based objective, which requires multiple coverage of priority points, and using the tournament selection criteria to iteratively improve the solution.

In \cite{morsly_particle_2012}, a new type of particle swarm optimization (PSO) algorithm, specifically binary particle swarm optimization (BPSO), is implemented for IPs. Here, a population of particles corresponds to solutions of the optimization problem, and the particles are updated iteratively to improve the configurations of the chosen camera network. Each particle update is akin to taking a step in the function space to improve the objective. The novelty of this model is that the position variables can only take values 0 or 1 based on particle velocity, which acts as a probability threshold. In \cite{fu_surveillance_2014}, the BPSO is refined by improving the information-sharing mechanism of \cite{morsly_particle_2012}. Recently, \cite{wang_solving_2020} use a different type of PSO algorithm, based on Latin hypercube-based resampling.

Our method is based on the framework of simulation optimization, and instead of utilizing DFO for choosing the next best solution, we utilize DFO to develop the candidate set of solutions and then use it as input to a set covering variant optimization model that uses a standard optimization software. Firstly, our model improves over those in \cite{wang_solving_2020} because it considers 3D coverage rather than a 2D approximation. These ignore blind spots that typically occur while considering 2D coverage approximations \citep{zhang_optimized_2013}. A similar 3D environment framework is considered in \cite{rebai_exact_2016}, but their model is not focused on a global search and selecting efficient camera positions through sampling strategies, unlike ours. It is more focused on developing a Pareto frontier for optimizing the dual objectives of maximizing coverage and minimizing cost, while considering a small subset of restricted camera configurations. 

\cite{penha_coverage_2013} consider large 3D environment frameworks, however they only consider omnidirectional light sources to solve the art gallery problem, which reduces the dimensionality of sample camera configurations and does not consider actual restricted-field cameras. \cite{morsly_particle_2012} consider a few 3D environments along with efficient visibility calculations, but their environments are simplistic and their framework focuses on complete coverage rather than minimizing costs. Also, the camera pitch angle is fixed at -90$^\circ$ which restricts all cameras to point downwards. Furthermore, real 3D models are exceedingly complex and single uncovered blocks which cannot be covered would render the formulation infeasible, especially if there are obstacles blocking the free space below. 

Our problem framework is similar to what \citet{sun_learning_2021} consider, where they focus on complex 3D environments with few restrictions on camera placements, but our model allows for different levels of discretization whereas their model is restricted by the size of the neural network input. Furthermore, our model's visibility calculation function is exact whereas their model relies on training their neural networks on sufficiently large datasets, only to achieve an approximation of the visibility calculation. Furthermore, we develop adaptive sampling strategies under resource constraints which are a novel approach, not previously utilized to solve optimal camera placement problems. Adaptive sampling strategies have been discussed in other applications, e.g., under partial information availability in data quality control systems, as shown by \cite{liu_adaptive_2015}, \cite{nabhan_correlation-based_2021}, and \cite{zan_spatial_2023}. Our sampling strategy, focusing on generating camera configurations, is discrete in position and continuous from an angle perspective. This algorithmic structure allows us to use the inherent structure and speed that IPs provide along with adaptive sampling methods to find good candidates for the optimal camera network. There are two different adaptive sampling strategies that we consider for choosing configurations iteratively. The first is based on the local sampling of camera configurations which were found to be optimal in the previous iteration. The idea is to improve local coverage while sampling over the entire space. The second adaptive sampling strategy is to sample camera configurations that help in covering blind spots. This is done by choosing camera configurations that specifically cover subsets of the open space that were largely uncovered under the previous optimal network configuration.

\section{Problem description and formulation} \label{section:3-problem-description}

In this section, we will provide the problem description and the mathematical formulation modeling the problem. We begin by defining terminology, key variables, and constraints, followed by a detailed explanation of the objective function that characterizes the problem.

\subsection{Problem outline}

We will first define terminology. In our problem statement, the term \emph{coverage space} refers to the physical 3D environment that needs to be observed by the camera network. A \emph{camera network} refers to a set of camera configurations which is feasible, i.e., it satisfies all the constraints. A particular \emph{camera configuration} represents a location in 3D space, represented by a position vector $p \in \mathbb{R}^3$, and the direction of the camera lens, represented by a direction vector $d \in \mathbb{R}^3$. This direction vector is directed from the position $p$ to the center point of the camera focus. These two vectors can categorize the position, pitch, and yaw of the camera. We assume the roll angle of the camera to be zero. However, this is an easy extension to include, and requires an additional degree of freedom. In practical applications, a setup with a non-zero roll angle is unnecessary. Brief descriptions of the \emph{roll}, \emph{pitch}, and \emph{yaw} can be found in Figure \ref{fig1a:roll-pitch-yaw} and its caption.

\begin{figure}[H]
    \centering
    \subfigure[]{\includegraphics[width=0.4\textwidth]{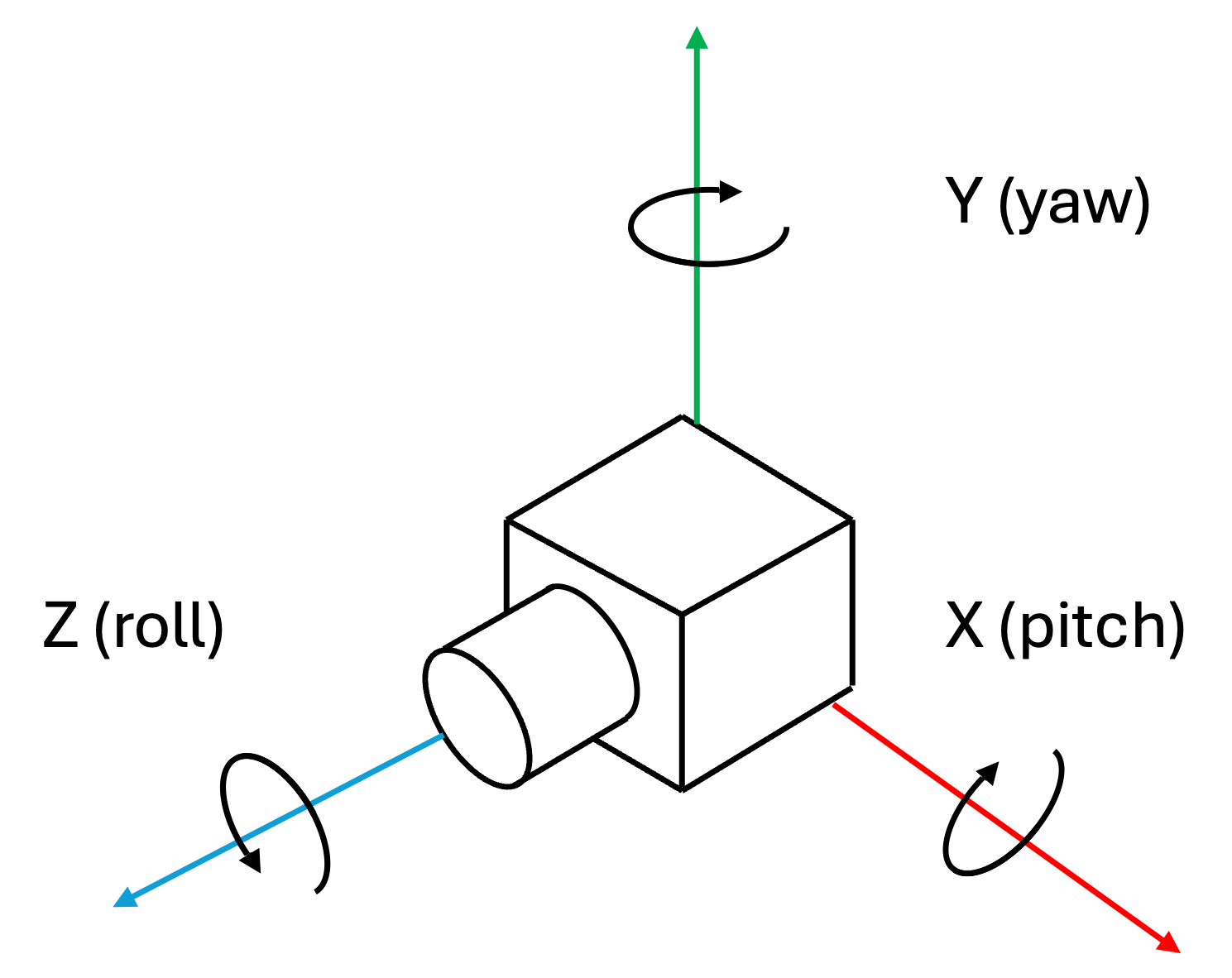} \label{fig1a:roll-pitch-yaw}}
    \hspace{1em}
    \subfigure[]{\includegraphics[width=0.4\textwidth]{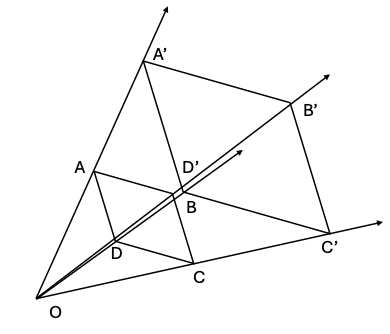} \label{fig1b:viewing-frustum}}
    \caption{Part (a) provides a description of the rotational axes pertaining to roll, pitch, and yaw. Roll refers to the rotation around the camera's forward axis. It changes the ``tilt" of the image, making the horizon appear slanted. Pitch is the rotation around the camera's horizontal axis. Yaw refers to the rotation around the camera's vertical axis. Part (b) describes the viewing frustum. Here, $O$ refers to the camera viewpoint. The planes that cut the frustum perpendicular to the viewing direction are called the near plane $(ABCD)$ and the far plane $(A'B'C'D')$. The 3D space encompassed within the frustum $(ABCDD'A'B'C')$ represents the viewing frustum. Objects closer to the camera than the near plane or beyond the far plane are not visible. The horizontal field of view is the angular measure subtended by center of lines $AD$ and $BC$ on point $O$, whereas the vertical field of view is the angular measure subtended by center of lines $AB$ and $CD$ on point $O$. The distance between the center of $(ABCD)$ and $(A'B'C'D')$ is known as the depth of field. The unit normalized vector connecting the point $O$ to the center of plane $(ABCD)$ is the view direction vector, where the tail of the direction vector is the camera position and the head is the point the camera is focused on}
    \label{fig1:camera-terms}
\end{figure}

As we are utilizing a discrete model, the space is segmented into a spatial grid made of cubes. These cubes are considered free space \emph{voxels} (3D pixels) if there is no object occupying them. If any object occupies the space, these are considered closed voxels and do not need to be observed. Maximizing coverage refers to maximizing the number of free space voxels observable by at least one camera. Within the model context, the coverage can be calculated as the total number of free space voxels that are observable in the free space by the camera network. Also, observability refers to the center of the free space voxel being visible, unobstructed by any other obstacles, within the viewing frustum of the camera. Sufficient granularity is maintained so that the non-coverage of a voxel corresponds to almost or partial coverage of the voxel by an obstacle, wall, or boundary. The standard algorithm structure is defined as follows: the scene data is imported and a voxel grid of appropriate size is constructed, then sample camera configurations are selected based on a specific sampling strategy, and the data are utilized to construct an IP.

\subsection{Mathematical program} \label{section:3.2-IP}

We formulate the optimal camera placement problem based on the maximum $k$-coverage model \citep{khuller_budgeted_1999, hochbaum_analysis_1998}, which is a variant of the popular set-covering problem from the combinatorics literature. The problem takes several sets, and a number $k$, as input. These sets typically have elements in common. Solving the problem requires taking at most $k$ of these sets such that the maximum number of elements are covered within the chosen set, i.e. the union of the selected sets has maximal size. This structure can be modified for our problem statement, as each camera configuration can be mapped to a set, and each voxel represents an element that may or may not be covered by the set.

\subsubsection{Model formulation} \label{section:3.2.1-model-formulation}

We will firstly define the notation for the mathematical program. The set of all positions $P$ is a subset of all free space voxels $V$ that need to be covered. Here, $p\in P$ refers to a camera position index (3D vector) and $d$ refers to the view direction vector described in Figure \ref{fig1b:viewing-frustum}, or just direction vector, where $d \in D$, with $D$ theoretically representing an uncountably infinite set as there are no restrictions on directions. It is normalized to a unit value. $(p,d)$ together constitute a camera configuration index. $v\in V$ represents a free space voxel index. $PD$ refers to the set of all possible camera configurations. As the sets $P$ and $D$ are independent, $PD = |P \times D|$, the Cartesian product of the two sets. Further, let $PD' \subseteq PD$ represent the set of sampled camera configurations. $V_{pd}$ represents the set of all free space voxels visible under camera configuration $(p,d)$. It is generated as the output of Algorithm \ref{alg:visibility-calc}, which conducts visibility calculations. $PD'_v$ is the subset of all camera configurations $PD'$ which can view free space voxel $v$. $PD^{\text{adj}}_p$ is the set of all camera configurations which are in the immediate neighborhood of a camera located at position $p$. As cameras have finite space, there can only be one camera per neighborhood. $x_{pd}$ is a binary variable, which is 1 iff the camera at configuration $(p,d)$ is part of the selected solution at that iteration. $y_v$ is a binary variable that is 1 iff the selected camera network covers voxel $v$.

Now, the objective of the program is to maximize the number of visible voxels, or covered voxels. Visibility of a voxel is determined by whether one of the cameras in the selected camera network has a clear line of sight to that particular voxel. This is subject to the constraint that the chosen subset of all camera configurations should be such that the sum of the cost $f_{pd}$ corresponding to the chosen camera sample at $(p,d)$ does not exceed total resource budget $\beta$. Note that the resource budget can represent the maximum number of cameras possible in the camera network. If that is the case, $f_{pd}$ reduces to 1, referring to a single camera. Another variant of the resource budget could account for varying costs of each sampled camera configuration, influenced by factors such as wiring or other extraneous expenses, with the budget representing the upper limit of these costs. In addition to the modified resource budget constraint, the model has an additional positional constraint, requiring that not more than one camera be placed at any location. The overall framework is very similar to a column-based formulation in IPs \citep{barnhart_branch-and-price_1998, wilhelm_technical_2001}. Typically, a row-based formulation for this problem is infeasible as the visibility constraints are nonlinear and continuous.

The model below is based on having mapped the entire sample of all camera configurations to the corresponding voxels visible from it, and encoding all sampled configurations as decision variables to construct an IP model. This is in the form of a modified maximum $k$-coverage problem. We can parameterize the problem by $PD'$, $V$, and $\beta$ and $\{f_{pd} |f_{pd}: (p, d) \in PD'\}$. This is equivalent to solving a reduced master problem under a column generation framework.
\[
\begin{array}{lllll}
    IP(PD', V, \beta, \{f_{pd}\}) = & \max & \sum_{v \in V} y_v  && (1) \\
    & \text{s.t.} &&& \\
    & \text{Camera cost:} & \sum_{(p,d) \in PD'} f_{pd} x_{pd} \le \beta && (2) \\
    & \text{Voxel coverage:} & y_v - \sum_{(p,d) \in PD'_v} x_{pd} \le 0 &\quad \forall v \in V & (3) \\
    & \text{One camera per locale:} & \sum_{(p, d) \in PD'^{\text{adj}}_p} x_{pd} \le 1 &\quad \forall p \in P & (4) \\
    & \text{Integrality:} & x_{pd}, y_{v} \in \{0, 1\} && (5)
\end{array}
\]

Under this practical framework, we assume that, as it is infeasible to consider all possible camera configurations explicitly, we only solve the program with a subset of variables, i.e. the set $PD'$ represents a sampled set of camera configurations, instead of the entire space of camera configurations ($PD' \subseteq PD$). Constraint (3) requires that we map every voxel $v$ to the subset of selected camera configurations $PD'_v$ which can cover it. We develop an efficient algorithm to solve the inverse problem, i.e., calculate the voxels covered by each of the selected configuration samples and then invert the function to get the mapping $PD'_v$. Even so, it is computationally prohibitive to perform these calculations for all possible camera configurations $PD$. So, we conduct adaptive sampling to obtain camera configurations and improve coverage efficiently. This is equivalent to solving a subproblem and generating new columns. The main differences are that:
\begin{enumerate}
    \item The subproblem is not an IP and is a heuristic because of the nature of the problem, which admits infinite number of columns.
    \item The new columns thus generated correspond only to constraints and do not augment the objective function directly, which typically happens in column generation. The objective is to maximize coverage and does not directly involve camera configurations, which represent the columns being generated.
    \item Generating columns and updating the master problem requires using a visibility algorithm to create the mapping between the new sampled configurations and the visible voxels.
\end{enumerate}

We will proceed to explain the adaptive sampling strategies in the next section.

\section{Solution algorithms} \label{section:4-solution-algorithms}

In this section, we will expand on how the various camera configurations are obtained and form the input for the IP. In our framework, the term camera configuration samples refers to all the sampled camera configurations, sampled based on a strategy, and then utilized in the optimization algorithm. The solution of the IP is post-processed to extract the optimal solutions from the IP and the exact coverage obtained. This process is repeated with new camera configurations being sampled and added to the original set of samples for the optimization model. As we are continuously adding new variables to the IP, we can guarantee that our solution improves monotonically. We will prove this statement in Section \ref{section:5-theoretical-analysis}. Furthermore, we use the solution from the previous iteration to warm-start the optimization algorithm in the next iteration. The total number of samples that are considered over the period of the whole algorithm, from which the best camera network is chosen, is referred to as the \emph{camera sampling budget}. The flowchart of the algorithm is presented in Figure \ref{fig2:optimization-flowchart}.

\tikzstyle{terminator} = [rectangle, draw, text centered, rounded corners, very thick, minimum height=2em]
\tikzstyle{process} = [rectangle, draw, text centered, very thick, minimum height=2em]
\tikzstyle{decision} = [diamond, 
minimum width=0.8cm,   
minimum height=0.4cm,  
text centered, 
draw=black, very thick, text width=3em]  
\tikzstyle{data}=[trapezium, draw, text centered, trapezium left angle=85, trapezium right angle=95, very thick, minimum height=1.5em]
\tikzstyle{connector} = [draw, -latex']
\tikzstyle{block} = [rectangle, draw, text width=5em, text centered, very thick, minimum height=4em]
\tikzstyle{line} = [very thick,->,>=stealth] 

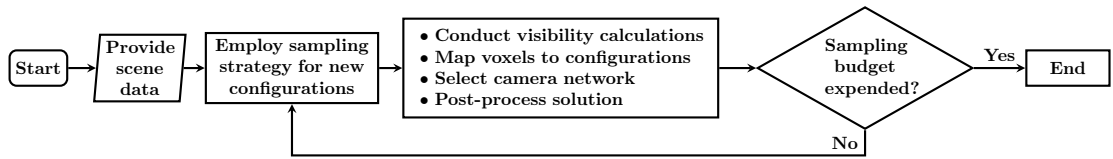
\begin{figure}[h!]
\centering
\resizebox{14.5cm}{!}{%
\begin{tikzpicture}[node distance = 2.5cm]  
\node [terminator] (start) {\textbf{Start}};
\node [data, right of=start, text width=3.8em, xshift=-0.5cm] (data) {\textbf{Provide scene data}};
\node [process, right of=data, text width=9em, xshift=0.5cm] (sample) {\textbf{Employ sampling strategy for new configurations}};
\node [process, right of=sample, text width=17em, xshift=2.8cm] (solve) { 
    \textbf{
    \begin{tabular}{l}
    $\bullet$ Conduct visibility calculations \\
    $\bullet$ Map voxels to configurations
    \\ 
    $\bullet$ Select camera network \\
    $\bullet$ Post-process solution
    \end{tabular}
    }
};
\node [decision, right of=solve, yshift=0cm, aspect=1.75, text width=4.5em, xshift=3.5cm] (solvemore) {\textbf{Sampling budget expended?}}; 
\node [process, right of=solvemore, text width=4em, xshift=1.5cm, yshift=-0cm] (end) {\textbf{End}}; 

    \draw [line] (start) -- (data);
    \draw [line] (data) -- (sample);
    \draw [line] (sample) -- (solve);
    \draw [line] (solve) -- (solvemore);
    \draw [line] (solvemore) -- node [midway,above] {\textbf{Yes}} (end);

    \draw [line] (solvemore.south) -- ++(0,-.5) node[midway, left]{\textbf{No}} -- ++ (-11.3,0) -- (sample.south);
\end{tikzpicture}}
\caption{Flowchart of the optimization process. It describes an iterative process where the environmental data are processed, and iteratively more sample camera configurations are added to improve the solution.}
\label{fig2:optimization-flowchart}
\end{figure}

\subsection{Preliminaries: visibility calculations}

The model requires mapping every camera configuration with the free space voxels that it covers. This is an implicit requirement of the IP model in Section \ref{section:3.2-IP}. We aim to achieve this by utilizing a fast visibility algorithm. The purpose of this is to mimic camera vision and calculate which voxels are unobstructed and observable from a particular camera configuration. Thus, given a configuration, subject to field of view and depth of field restrictions, it allows us to compute the free space voxels that are observable by the camera. A free space voxel is said to be observable if three conditions are satisfied:

\begin{enumerate}[label=(V\arabic*)]
    \item It is within the depth of field of the camera configuration. \label{4.1-vis-condition-1}
    \item It is within the horizontal and vertical field of view of the camera configuration. \label{4.1-vis-condition-2}
    \item It is not blocked by any obstacles. \label{4.1-vis-condition-3}
\end{enumerate}

We develop and employ a novel flood-filling approach \citep{levoy_area_1981} to determine which of the voxels, $r^v \in r^V$ (representing position vectors of free space voxels in the environment), are present in the viewing frustum of a specified camera configuration. The procedure starts with preliminary calculations by computing normalized direction vectors $\hat{d}_{wn}$ and $\hat{d}_{hn}$ which respectively define the vectors normal to the horizontal plane and vertical plane in the field of view of the camera configuration's viewing frustum relative to its position $r^c_p$. These vectors are derived from corner points $r^c_{w1}, r^c_{w2}$ for the horizontal axis and $r^c_{h1}, r^c_{h2}$ for the vertical axis.  

After these preliminary calculations, we employ flood-filling: the main idea here is that we start from all voxels which are at a minimum depth of field distance from the camera and check connected voxels to visible ones to find out which of them are present in the viewing frustum. If any voxel $r^v$ is within the viewing frustum and is a free space voxel, then the voxels around it also have a possibility of being visible. Voxels that are a unit distance away from current voxel $r^v$ are then added to the visibility stack $r^{V_{\text{stack}}}$. This way, we only stack and consider the voxels that are nearby a visible voxel. As the visibility set is a connected set, this allows us to omit large sets of voxels which are behind the camera viewing direction. Also, this further allows us to omit voxels from consideration which completely block the visibility frustum with an obstacle. Checking for obstacle obstruction is the most computationally intensive aspect of visibility calculations. The flood-filling approach allows us to bypass all voxels which are within the FOV, but are segmented completely and behind obstructions.

Then, we must assess whether each voxel in $r^{V_{\text{stack}}}$ satisfies conditions \ref{4.1-vis-condition-1} and \ref{4.1-vis-condition-2}, i.e., whether the FOV and DOF requirements are satisfied. To ensure this, we calculate the direction vector $\hat{d}^c_v$, which is the vector from the camera location to each voxel location $r^v$. We decompose this vector into two components, one parallel to $\hat{d}_{wn}$ and the other vector, $\hat{d}^c_{vw}$, parallel to the horizontal field of view plane. This process is repeated for the vertical field of view plane, providing $\hat{d}^c_{vh}$. Then, we check whether the vector $\hat{d}^c_{vw}$ and $\hat{d}^c_{vh}$ is within the non-reflex angle created by the horizontal and vertical FOV plane. This is done assuming the camera field of view forms an angle less than $180^\circ$. If both of these statements are true, then the voxel is within the field of view of the camera configuration. We repeat this procedure until the stack is empty. This way we generate a set of voxels $r^{\tilde{V}}$ which are connected, within the viewing frustum, and within the set of free space voxels.

Finally, we assess whether the subset of free space voxels, which have been checked for FOV and DOF restrictions, are not obstructed by objects in the environment \ref{4.1-vis-condition-3}. Within the set of voxels, $r^v \in r^{\tilde{V}}$, we calculate the first intersection point of the ray with vertex $r^c_p$ and direction vector $r^v - r^c_p$ with the environment mesh object which is a set of triangles, providing us with position vector $r^c_{Fv}$. This is done natively using the pre-implemented M\"{o}ller-Trumbore algorithm \citep{moller_fast_1997} in Python's Open3D package.\footnote{This algorithm is referred to as the Ray-Intersection minor function in Algorithm \ref{alg:visibility-calc} and is equivalent to the \texttt{\detokenize{o3d.t.geometry.RaycastingScene().cast_rays()}} method in Open3D.} This provides us with the first intersection point of a camera trying to view voxel $r^v$. If the length of vector $\hat{d}^c_v$ is less than the maximum depth of field and length of $r^c_{Fv} - r^c_p$, that means the voxel $r^v$ is unobstructed and near enough from the camera. The pseudocode for this algorithm is given in Algorithm \ref{alg:visibility-calc}. The notation for the model is provided in Appendix \ref{appB:vis-notation}.

In the following section, we will describe and compare the two proposed adaptive sampling strategies. The idea behind adaptive sampling of camera configurations is to use information obtained about the environment and the solution network from previous iterations to improve the selection of new camera configurations. This allows the solution to reach a better coverage and with a lower sampling budget. In both strategies that we discuss here, there are two types of camera configurations sampled: 1) informed configurations, that utilize knowledge of the environment and/or previous camera network solutions; and 2) random configurations, that sample positions and directions at random.

\subsection{Adaptive sampling strategy: Explore and Exploit (E\&E)}

This sampling strategy is designed to efficiently sample camera configurations by balancing the exploration of new potential configurations and the exploitation of existing ones, i.e. it balances the need for exploring new configurations and refining existing solutions to ensure comprehensive coverage of the search space while achieving optimal camera placements. Prior to this, techniques employing adaptive sampling frameworks that balance the trade-off between exploration and exploitation have been applied to a few domains, such as bandit problems \citep{lu_daisee_2023} and robotics-related challenges \citep{munir_analysis_2021}. We propose a similar targeted strategy applied to the camera placement domain.

The algorithm begins by determining the number of camera configurations to be sampled, denoted as \( N_{\text{tot}} \). This is then expressed as the sum of two components: \( N_{\text{explore}} \), representing the number of exploration samples, and \( N_{\text{exploit}} \), representing the number of exploitation samples. This partitioning is conducted using $f_{\text{exploit}}$, which denotes the fraction of all camera configurations in an iteration that are to be sampled during the exploit phase. Correspondingly, $(1-f_{\text{exploit}})$ represents the fraction to be sampled during the explore phase.

The exploration phase aims to diversify the search space by considering random configurations, exploring unexplored regions, or supporting other camera configurations to achieve the best coverage. In this phase, the algorithm samples \( N_{\text{explore}} \) exploratory camera configurations, i.e. set of randomly sampled position-direction pairs, \( PD_{\text{explore}} \), from the position set \( P \) and direction set of unit norm vectors \( D \). $N_{\text{pos-explore}}$ represents the number of positions sampled per iteration in the exploration phase, whereas $N_{\text{dir-pos}}$ represents the number of directions sampled per position. This allows for a many to one mapping, i.e. a single position is attached to $N_{\text{dir-pos}}$ directions to provide $N_{\text{dir-pos}}$ configurations per sampled position. Thus, the number of configurations to be sampled in the exploration phase is $N_{\text{dir-pos}} \cdot N_{\text{pos-explore}} \approx N_{\text{explore}}$.
 
This sample is generated using the Sample-Random-Configurations auxiliary function. The function definition is described in Appendix \label{app:random_config}.

For the exploitation phase, we need to calculate the number of exploitative configurations (informed configurations) that can be sampled, while maintaining the sampling budget designated. For this reason, we calculate \( N_{\text{config-sol}} \), which is derived by taking the ratio of the number of configurations set aside for exploitation, \( N_{\text{exploit}} \), to the number of camera configurations chosen in the optimal solution of the previous iteration, $N_{\text{sol}}$. Then a mapping of ordinals to directions, \( \mathcal{D}_z \), is sampled within a specified jitter range, \( \theta_{\text{jitter}} \), of the direction vector pointing to the positive $z$-axis allowing for slight variations in the original solution's camera orientation. 

The logic behind the exploitation phase is that the configurations included in the previous optimal solutions are likely close to a good solution, so more configurations are added to the sample set by choosing sample configurations that have a position, $p_\text{new}$, and direction, $d_\text{new}$, close to the previous optimal camera network's configurations. This is done by first rotating \( N_{\text{config-sol}} \) angles from \( \mathcal{D}_z \) in the same direction as rotating the unit z-axis vector, $z_0$, towards the previous optimal configuration $d$. Additionally, random voxel offsets, which are uniformly sampled from a multivariate uniform distribution ($\mathcal{U}_{\text{d}}$) with bounds $\{-v_{\text{jitter}}, v_{\text{jitter}}\}$, are applied to the solution positions, introducing variability in the camera placement while ensuring proximity to the original solution. This provide us with the set $PD_{\text{exploit}}$.

Finally, the algorithm combines the exploration and exploitation sets to form the final set of sampled configurations, \( PD_{\text{E\&E}} \), which is then returned.  The complete algorithm is provided in Algorithm \ref{alg:ee-strategy}, 
and the complete notation is given in Appendix \ref{appC:ee-notation}. The algorithm utilizes three auxiliary functions: Sample-Random-Configurations, Sample-Spherical-Cap and Rotate-Along, which are detailed in Appendix \ref{section:F-minor-functions}.

\begin{algorithm}
\caption{E\&E strategy pseudocode}
\label{alg:ee-strategy}
\begin{algorithmic}[h!]
\Procedure{Explore-and-Exploit}{$N_{\text{pos}}, N_{\text{dir-pos}}, f_{\text{exploit}}, P, PD_{\text{iter}}, \theta_{\text{jitter}}, v_{\text{jitter}}$}
    \State $N_{\text{tot}} \gets N_{\text{pos}} \cdot N_{\text{dir-pos}}$
    \State $N_{\text{explore}} \gets \lceil \textcolor{white}{\rceil} N_{\text{tot}} \cdot (1 - f_{\text{exploit}})
    \textcolor{white}{\lfloor} \rfloor$
    \Comment{For explore-phase samples}
    \State $N_{\text{pos-explore}} \gets \lceil \textcolor{white}{\rceil} N_{\text{explore}} / N_{\text{dir-pos}}
    \textcolor{white}{\lfloor} \rfloor$
    \State $PD_{\text{explore}} \gets \text{Sample-Random-Configurations}(N_{\text{pos-explore}}, P, N_{\text{dir-pos}})$
    
    \State $N_{\text{sol}} \gets |PD_{\text{iter}}|$         \Comment{For exploit-phase samples}
    \State $N_{\text{exploit}} \gets \lceil \textcolor{white}{\rceil} N_{\text{tot}} \cdot f_{\text{exploit}}
    \textcolor{white}{\lfloor} \rfloor$
    
    \State $N_{\text{config-sol}} \gets \lceil \textcolor{white}{\rceil} N_{\text{exploit}} / N_{\text{sol}} \textcolor{white}{\lfloor} \rfloor$
    \State $\mathcal{D}_z \gets \text{Sample-Spherical-Cap}(\theta_{\text{jitter}},  N_{\text{sol}} \cdot N_{\text{config-sol}})$
    \State $i \gets 0$
    \State $PD_{\text{exploit}} \gets \emptyset$
    
    \For{$(p,d) \in PD_{\text{iter}}$}
    
        \State $i \gets i+1$
        \For{num $= 1:N_{\text{config-sol}}$}
            \State $d_z \gets \mathcal{D}_z[\text{num} \cdot i]$
            \State $p_{\text{new}} \gets \emptyset$
            \While{$p_{\text{new}} = \emptyset$ or $p_{\text{new}} \notin P$ }
                \State $p_{\text{perturb}} \sim \mathcal{U}_{\text{d}}\{- v_{\text{jitter}},  v_{\text{jitter}}\}^3$
                \State $p_{\text{new}} = p + p_{\text{perturb}}$
            \EndWhile
            \State $d_{\text{new}} \gets \text{Rotate-Along}
            (z_0, d_z, d)$
            \State $PD_{\text{exploit}} \gets PD_{\text{exploit}} \cup \{(p_{\text{new}}, d_{\text{new}})\}$
        \EndFor
    \EndFor
    \State $PD_{\text{E\&E}} \gets PD_{\text{exploit}} \cup PD_{\text{explore}}$ 
    \State \Return $PD_{\text{E\&E}}$
    \EndProcedure
\end{algorithmic}
\end{algorithm}

\subsubsection{Illustrative example of E\&E strategy}

Figure \ref{fig3a:ee-illustration} illustrates a typical iteration of the E\&E strategy as visualized in an individual room as part of a larger apartment, which forms the environment. This environment will be discussed in more detail in Section \ref{section:6-case-study}. The legend for the individual cubic components is described in Figure \ref{fig3b:legend-box}.

In the first iteration, there are only randomly sampled configurations in that section, which correspond to the exploration phase. The visibility calculations are conducted and one of the camera configurations is made part of the solution camera network. In the next iteration, additional configurations are sampled which have positions and directions near to the one chosen in the previous iteration. This improves the coverage in iteration 2. This process is repeated for a total of 10 iterations in this case. In the end, randomly chosen configurations (explore phase) along with informed configurations (exploit phase) ensure a good coverage overall.

\begin{figure}[h!]
    \centering
    \subfigure[]{%
        \includegraphics[width=\textwidth]{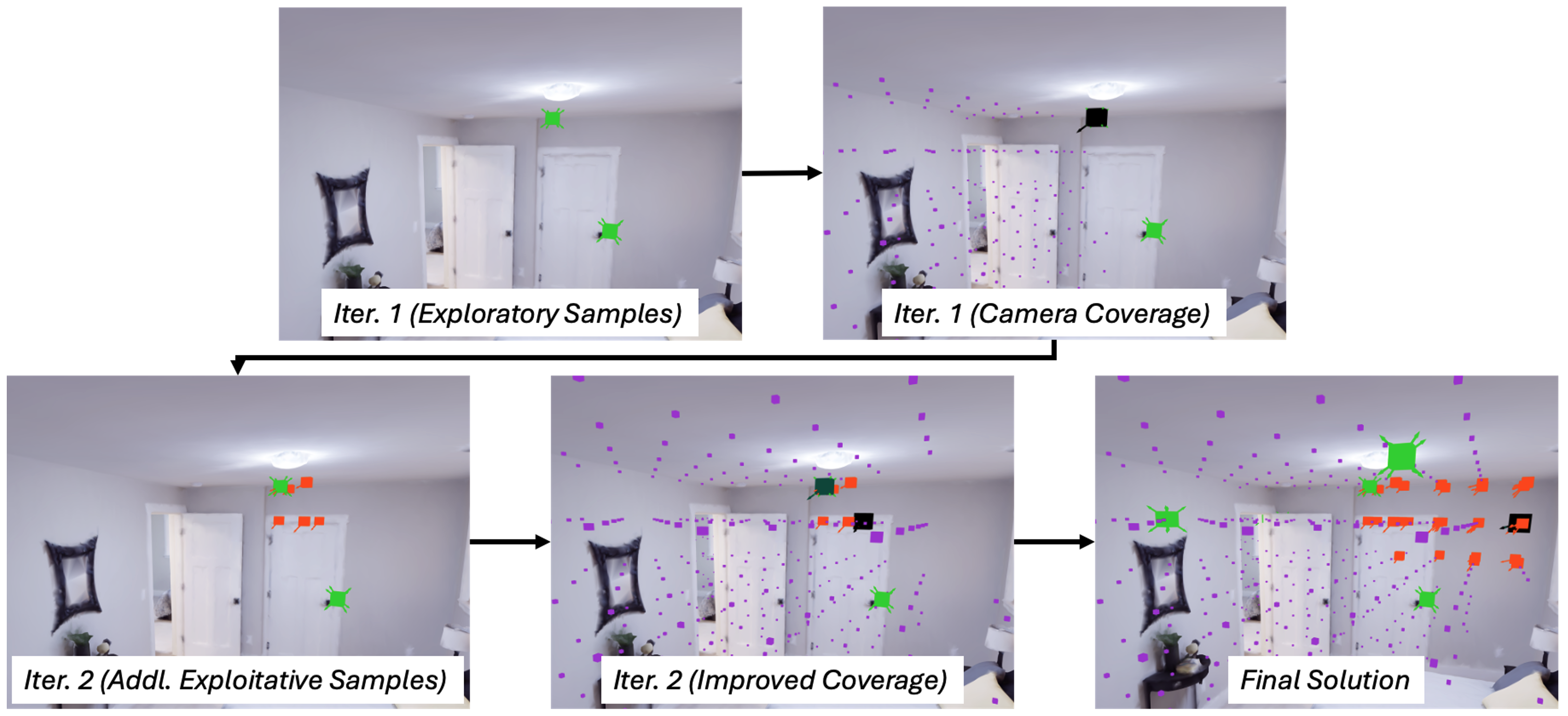}%
        \label{fig3a:ee-illustration}%
    }\\[1em]
    \subfigure[]{%
        \includegraphics[width=0.8\textwidth]{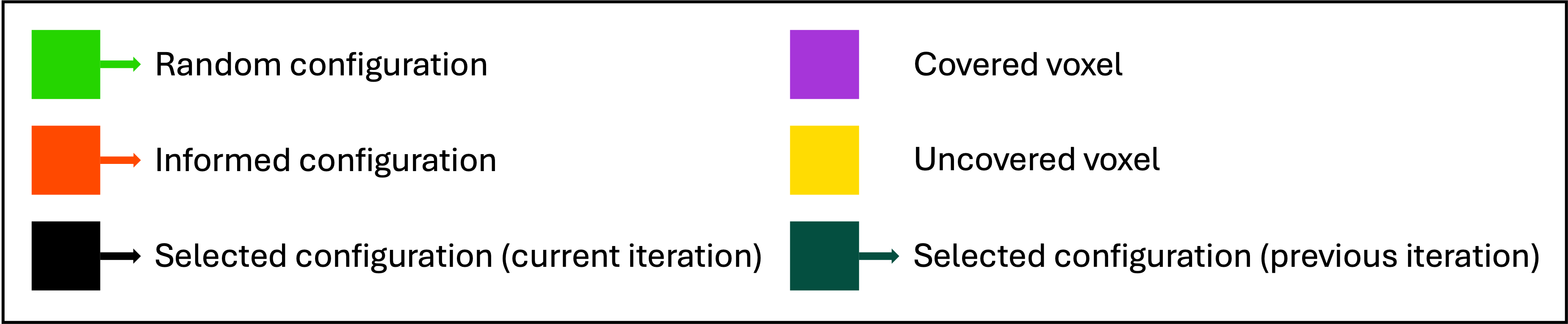}%
        \label{fig3b:legend-box}%
    }
    \caption{Part (a) (top) illustrates the iterative approach of the E\&E strategy in a portion of the environment. In the first iteration (abbreviated as iter.), two voxel positions are sampled in the section, with eight directions per position (green cubes and arrows). One of the configurations is selected as the network solution, and visible free space is shown in dark pink. Subsequent iterations include exploratory (green) and exploitative (orange) configurations. The final solution optimizes camera placements for maximum coverage. Part (b) (bottom) provides descriptions of components in different iterations of the sampling strategy. The position of the boxes corresponds to the position of the corresponding free space voxel or position vector. If the box has an arrow, the arrow direction refers to the view direction vector of the configuration. Multiple arrows may indicate that there are multiple view directions \( d \) sampled for that particular position \( p \)}
    \label{fig3:ee-illustration-legend-box}
\end{figure}

\subsection{Adaptive sampling strategy: Target Uncovered Spaces (TUS)}

This is another comprehensive and consistent strategy that ensures that the algorithm efficiently focuses on random search and more targeted search for spaces that have not been adequately covered in previous iterations, thereby optimizing camera placement for maximum coverage. There are two phases in the TUS strategy similar to the E\&E strategy: 1) random search phase, same as the exploration phase in the E\&E strategy which generates random configurations, and 2) targeted coverage phase, which generates informed configurations.

The TUS strategy starts similar to the E\&E strategy, i.e., by determining the total number of camera configurations to be sampled, denoted as \( N_{\text{tot}} \). This is further expressed as the sum of two components: \( N_{\text{random}} \), representing the number of samples in the random search phase, and \( N_{\text{targeted}} \), representing the number of samples in the targeted search phase. This partitioning is conducted using $f_{\text{unc}}$, which denotes the fraction of all camera configurations in an iteration that are to be sampled during the targeted search phase.

The random search phase works exactly like the explore phase as it aims to diversify the search space by considering random configurations to achieve the best coverage. The algorithm samples \(N_{\text{random}}\) exploratory configurations, by randomly pairing positions from position set \(P\) with directions from direction set \(D\). Each of the $N_{\text{pos-random}}$ positions is paired with \(N_{\text{dir-pos}}\) directions each, yielding \(N_{\text{dir-pos}} \cdot N_{\text{pos-random}} \approx N_{\text{random}}\) configurations. This uses the auxiliary function Sample-Random-Configurations (refer to Appendix \ref{app:random_config}.

The targeted search phase focuses on sampling a subset of camera configurations specifically designed to cover sparsely-covered areas. This requires finding spaces that have the lowest coverage. The relative focus on different uncovered spaces is calculated by superimposing a supervoxel grid over the initial voxel grid. The counts of individual voxels uncovered in the optimal solution of the previous iteration, belonging to each supervoxel, are stored in \( \mathcal{G}_{\text{unc}} \). These provide relative probability densities on where the focus of targeted coverage should be. These probabilities are normalized, such that they sum to 1, using $C_{\text{normalize}}$.

Once the probabilities have been calculated, then we sample configurations proportionally based on the calculated probabilities. For a total of \( N_{\text{targeted}} \) times, a supervoxel center is sampled with replacement from a categorical distribution with probabilities \(Pr(v^{\text{super}}_{\text{center}}) \). Thus, the probability of choosing a particular supervoxel center is proportional to the number of uncovered free space voxels that particular supervoxel contains. Then, for each sample, a position $\tilde{p}$ is sampled. There is a strict visibility hyperparameter considered here. If this hyperparameter is set to `true', then we ensure that from the chosen camera position $p_{\text{new}}$, $v^{\text{center}}$ is visible without an environmental obstruction like walls or objects. We do this by finding the furthest position away from the $v_{\text{center}}$ along the direction $-(v_{\text{center}} - \tilde{p})$ that the camera position might be located while being unobstructed. This ensures that any new camera configuration thus found will always cover the target supervoxel center without obstructions.\footnote{This computation is encoded in the auxiliary function Linear-Visibility in Algorithm \ref{alg:tus-strategy}.}  If this hyperparameter is set to `false', then this visibility condition is not checked and any position $\tilde{p} \in P$ works as the camera position $p_{\text{new}}$. Further, the camera direction vector $d_{\text{new}}$ is set to $v_{\text{center}} - p_{\text{new}}$. Thus, the new sampled camera configuration is $(p_{\text{new}}, d_{\text{new}})$. This process is repeated until the desired number of configurations is achieved. This constitutes the targeted camera configuration set. Finally, the algorithm combines the random search set and the targeted set to form the final set of sampled camera configurations, \( PD_{\text{TUS}} \), which is then returned.

The complete algorithm is given in Algorithm \ref{alg:tus-strategy}, and the notation is explained in Appendix \ref{appD:tus-notation}.

\begin{algorithm}[H]
\caption{TUS strategy pseudocode}
\label{alg:tus-strategy}
\begin{algorithmic}[1]
\Procedure{Target-Uncovered-Spaces}{$N_{\text{pos}}, N_{\text{dir-pos}}, f_{\text{unc}}, P, V^{\text{super}}_{\text{center}}, \mathcal{G}_{\text{unc}}$}

    \State $N_{\text{tot}} \gets N_{\text{pos}} \cdot N_{\text{dir-pos}}$
    \State $N_{\text{random}} \gets \lceil \textcolor{white}{\rceil} N_{\text{tot}} \cdot (1 - f_{\text{unc}})
    \textcolor{white}{\lfloor} \rfloor$
    \Comment{For random search-phase samples}
    \State $N_{\text{pos-random}} \gets \lceil \textcolor{white}{\rceil} N_{\text{random}} / N_{\text{dir-pos}}
    \textcolor{white}{\lfloor} \rfloor$
    \State $PD_{\text{random}} \gets \text{Sample-Random-Configurations}(N_{\text{pos-random}}, P, N_{\text{dir-pos}})$

\State $N_{\text{targeted}} \gets \lceil \textcolor{white}{\rceil} N_{\text{tot}} \cdot f_{\text{unc}}
    \textcolor{white}{\lfloor} \rfloor$
\State $C_{\text{normalize}} \gets 0$
\For{$v^{\text{super}}_{\text{center}} \in V^{\text{super}}_{\text{center}}$}
    \State $C_{\text{normalize}} \gets C_{\text{normalize}} + \mathcal{G}_{\text{unc}}\lt(v^{\text{super}}_{\text{center}}\rt)$
\EndFor
\For{$v^{\text{super}}_{\text{center}} \in V^{\text{super}}_{\text{center}}$}
    \Comment{Calculate probability of targeting supervoxel}
    \State $Pr(v^{\text{super}}_{\text{center}}) \gets \frac{\mathcal{G}_{\text{unc}}\lt(v^{\text{super}}_{\text{center}}\rt)}{C_{\text{normalize}}}$
\EndFor
\State $PD_{\text{unc}} \gets \emptyset$
\For{$\text{num} = 1:N_{\text{targeted}}$}
    \Comment{For targeted search-phase samples}
    \State $v_{\text{center}} \sim \text{Categorical}\lt( Pr(v) | v \in V^{\text{super}}_{\text{center}} \rt)$
    \State $\Tilde{p} \sim P$
    \If{$\text{strict\_vis\_req} = \text{True}$}
        \State $p_{\text{new}} = \text{Linear-Visibility}(\Tilde{p}, v_{\text{center}}, P)$
    \Else
        \State $p_{\text{new}} = \Tilde{p}$
    \EndIf
    \State $d_{\text{new}} \gets v_{\text{center}} - p_{\text{new}}$
    \State $PD_{\text{unc}} \gets PD_{\text{unc}} \cup \{(p_{\text{new}},d_{\text{new}})\}$
\EndFor
\State $PD_{\text{TUS}} \gets PD_{\text{unc}} \cup PD_{\text{random}}$
\State \Return $PD_{\text{TUS}}$
\EndProcedure
\end{algorithmic}
\end{algorithm}

\subsubsection{Illustrative example of TUS strategy}

Figure \ref{fig4:tus-illustration} illustrates an iteration of the TUS strategy applied to a different section, the kitchen and dining room space of the apartment that will be studied in more detail in Section \ref{section:6-case-study}. Again, the legend for the individual cubic components is described in Figure \ref{fig3b:legend-box}.

In the first block, we segment the whole duplex apartment into large cubes, represented by the black demarcating lines. These represent supervoxels, within which we estimate the count of unviewable voxels, that help guide the strategy. In the first iteration, there are randomly sampled configurations in that section, which correspond to the random search phase as seen in the second block. The best network is chosen from the first set of samples as in the third block. In the fourth block, we observe that new informed configurations are considered which point towards the floor, as the floor space has a large count of uncovered voxels. In the fifth block we observe the algorithm choose a better camera network that covers the floor space more thoroughly. This process is repeated for $10$ iterations, with the final solution showing that a single camera is able to cover a much larger space, with recursive updates. As we are visualizing a part of the entire environment, some of the resource budget is utilized to cover other spaces and are not observed in the block.

\begin{figure}
    \centering
    \includegraphics[width=\textwidth]{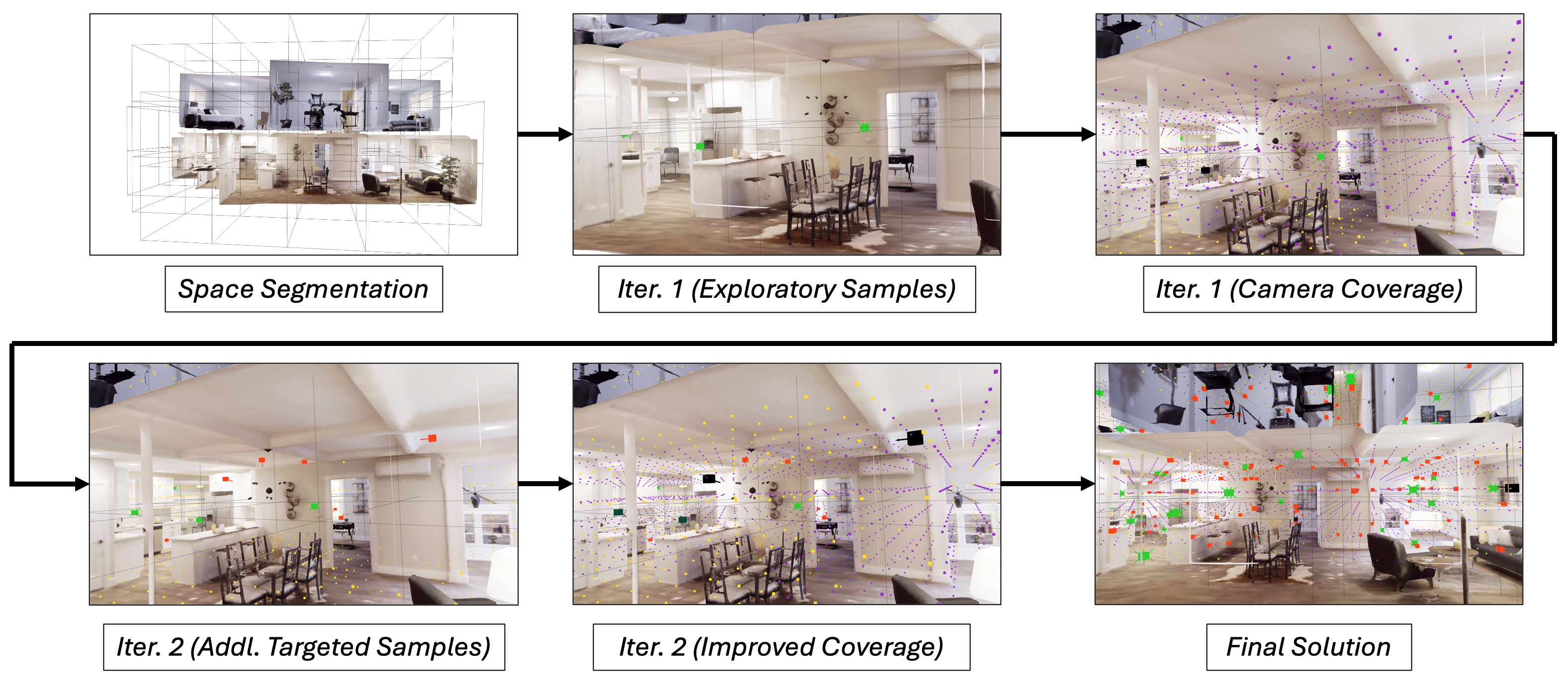}
    \caption{Iteration of the TUS strategy applied to indoor apartment, focusing on a subset of the environment—specifically, the kitchen and dining room. (detailed in Section \ref{section:6-case-study}). Large cubes (supervoxels) are segmented in the first block, followed by random sampling in the second (iteration 1). The best network is then selected in the third block, with two cameras visible in the observable portion. The fourth and fifth blocks (iteration 2) refine camera placements for better floor coverage. The sixth block illustrates the final iteration, achieving better overall coverage with a single camera placed in the observed space. The rest of the resource budget is utilized to cover other spaces which are not observed in the block. See Figure \ref{fig3b:legend-box} for the legend of cubic components}
    \label{fig4:tus-illustration}
\end{figure}

\subsection{Benchmark algorithms}

Our solution framework features three main components: sampling strategy, visibility calculations, and solving an optimization problem. We benchmark the efficiency of the components we propose by comparing them against benchmark algorithms. We conduct this benchmarking for the sampling strategy and the optimization method used to find the chosen camera network given the sample camera configurations.

\subsubsection{Sampling strategy: Random Selection (RS)} \label{section:4.4.1-random-select}

We benchmark the efficiency of the sampling strategy by considering a naive sampling strategy. The naive approach is merely choosing all sample configurations at once and at random. The approach contrasts with the adaptive sampling strategy, where we iterate over the sampling function multiple times and sample strategically. In the naive approach, the entire sampling budget is exhausted at once in one iteration. So, none of the new samples are chosen using the environmental information or based on previous iteration solutions. 

We check efficiency of our strategy by two means: time taken and maximum coverage achieved, while holding other hyperparameters constant. So, in this case, we hold the sampling budget constant. Therefore, we can set the sample configuration set as $PD' = \text{Sample-Random-Configurations}(N_{\text{pos}} \cdot N_{\text{iter}}, P, N_{\text{dir-pos}})$, where $N_{\text{iter}}$ represents the total count of iterations during which the sampling algorithm is invoked in a single run. The complete pseudocode for random sampling is provided in Appendix \ref{app:random_config}.

\subsubsection{Optimization framework: greedy heuristic} \label{section:4.4.2-greedy-heuristic}

We can compare the optimal solution of the IP with that of an alternative greedy heuristic. The core idea behind the greedy heuristic is that we greedily choose the best camera configuration, i.e., we keep choosing the camera configuration that maximizes the marginal coverage gain per unit cost after each configuration selection until we exhaust the resource budget. Simultaneously, we also remove configurations which conflict with previously chosen ones according to constraint $(4)$ of the optimization model provided in Section \ref{section:3.2-IP}. As the original formulation is a provably NP-hard problem, the greedy heuristic will provide a lower bound on the optimal solution of the IP. This algorithm is solvable in pseudopolynomial time with a complexity of $\mathcal{O}(\beta \cdot |PD'| \cdot |V|)$, as it depends on data $\beta$. Here, \( \mathcal{O}(\cdot) \) denotes an upper bound. Specifically, if \( f(n) = \mathcal{O}(g(n)) \), it means \( f(n) \) grows at most as fast as \( g(n) \), up to a constant factor. The proof of this complexity and the pseudocode for the algorithm are provided in Appendix \ref{app:greedy_heuristic}.

\section{Theoretical analysis} \label{section:5-theoretical-analysis}

In this section, we theoretically establish the consistency of our algorithmic framework. Furthermore, we demonstrate that our optimization framework exhibits monotonicity, i.e., 
the solution keeps on improving as more camera configuration samples are considered.

\subsection{Sampling strategy}

The analysis in this section provides a consistency proof, demonstrating that as the number of samples increases, the probability of identifying the optimal camera network converges to $1$.

Specifically, we analyze two key aspects: (1) the probability of obtaining the true optimal solution after sampling $n$ camera configurations, and (2) the expected number of samples required to achieve the optimal solution. Such a result is useful, as it ensures that the framework being used, while heuristic in nature, possesses provable guarantees of success. It simultaneously allows us to comment on the rate of convergence.

We begin by defining a discrete set of camera configurations, where each configuration consists of a position \( p \) and a direction \( d \). The set of feasible positions \( P \) is a discrete subset of free space voxels \( V \). In contrast, the set of feasible directions \( D \) is continuous and represented as points sampled on the unit sphere. To handle this, we discretize \( D \) using single-precision floating-point representation, where \(\epsilon\) denotes the smallest positive difference between distinct representable values in this format.

We introduce additional notation to develop the theoretical analysis. To express the growth rate of functions, we use \( \mathcal{O}(\cdot) \), which gives an upper bound. A more rigorous definition is provided in Section \ref{section:4.4.2-greedy-heuristic}. Additionally, we use \( Pr(\cdot) \) and $\mathbb{E}[\cdot]$ to denote the probability and the expectation of the expression inside the parentheses, respectively.

We now proceed to calculate the number of possible camera configurations.\\

\begin{lemma}
    Given a single-precision floating-point representation, with \( \epsilon \) as the smallest distinguishable increment, and \( |P| \) as the cardinality of the position set, the number of possible camera configurations is on the order of \( \mathcal{O} \left( \frac{|P|}{\epsilon^2} \right) \).
\end{lemma}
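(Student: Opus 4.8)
The plan is to count configurations by exploiting the Cartesian product structure of the configuration space $PD$ established in Section \ref{section:3.2.1-model-formulation}: since a configuration is an independent pairing of a position $p \in P$ with a direction $d \in D$, the total number of configurations factors as the number of admissible positions times the number of admissible (discretized) directions. The position count is given as $|P|$, so the entire argument reduces to bounding the number of distinct directions representable in the single-precision format. First I would isolate this direction-counting problem and treat the product step as a trivial multiplication at the end.

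The key observation is that directions are \emph{unit} vectors, so they do not range freely over $\mathbb{R}^3$ but are confined to the unit sphere $S^2 = \{d \in \mathbb{R}^3 : \|d\| = 1\}$, which is a two-dimensional manifold. Concretely, a representable direction $d = (d_1, d_2, d_3)$ must satisfy $d_1^2 + d_2^2 + d_3^2 = 1$, so fixing the first two coordinates determines the third up to a sign. I would count as follows: each coordinate lies in $[-1,1]$, and because $\epsilon$ is the smallest gap between distinct representable values, the interval $[-1,1]$ contains at most $2/\epsilon + 1 = \mathcal{O}(1/\epsilon)$ representable numbers. Treating $d_1$ and $d_2$ as the two free coordinates yields $\mathcal{O}(1/\epsilon)\cdot\mathcal{O}(1/\epsilon) = \mathcal{O}(1/\epsilon^2)$ admissible pairs, and the normalization constraint then fixes $d_3$ up to a constant factor of two for the sign. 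Hence the discretized direction set has size $|D| = \mathcal{O}(1/\epsilon^2)$. Combining the two counts gives the claim:
\[
|PD| = |P| \cdot |D| = |P| \cdot \mathcal{O}\!\left(\frac{1}{\epsilon^2}\right) = \mathcal{O}\!\left(\frac{|P|}{\epsilon^2}\right).
\]

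The hard part will be justifying the exponent $2$ rather than $3$: a naive count that treats all three coordinates of $d$ as independent would give the looser bound $\mathcal{O}(1/\epsilon^3)$, and the reduction to $\mathcal{O}(1/\epsilon^2)$ rests entirely on the unit-norm constraint collapsing one degree of freedom. Everything else is routine, but I would flag two points to keep the upper bound rigorous. First, single-precision spacing is non-uniform on $[-1,1]$ (sparser away from $0$), so using the minimal increment $\epsilon$ as a uniform spacing \emph{overcounts} the true grid; this only inflates the count and therefore preserves the $\mathcal{O}(\cdot)$ upper bound. Second, not every pair $(d_1,d_2)$ admits a real $d_3$—only those with $d_1^2 + d_2^2 \le 1$—but discarding the infeasible pairs only removes candidates and so again preserves the bound. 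An equivalent and slightly cleaner route, which I would mention as a sanity check, is to parametrize $S^2$ by two angles, each discretized to $\mathcal{O}(1/\epsilon)$ levels, arriving at the same $\mathcal{O}(1/\epsilon^2)$ directly from the two intrinsic degrees of freedom of the sphere.
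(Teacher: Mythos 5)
Your proof is correct, but it reaches the bound on $|D|$ by a genuinely different counting argument than the paper's. The paper counts lattice points volumetrically: it notes there are $(2/\epsilon)^3$ representable points in $[-1,1]^3$, assumes they are uniformly distributed, and multiplies by the ratio of the volume of a hollow spherical shell of thickness $\epsilon$ (radii $1-\epsilon$ to $1$) to the volume of the cube, obtaining $|D| \le 4\pi/\epsilon^2$ explicitly. You instead project onto two free coordinates: $d_1, d_2$ each admit $\mathcal{O}(1/\epsilon)$ representable values, and the exact constraint $\|d\|=1$ fixes $d_3$ up to sign, giving $|D| = \mathcal{O}(1/\epsilon^2)$ directly from the two intrinsic degrees of freedom of $S^2$. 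Your route is more elementary and, as you note, rigorous as an upper bound without the paper's implicit uniform-distribution assumption (floating-point spacing is denser near $0$, and you correctly observe that treating $\epsilon$ as a uniform spacing only overcounts); the paper's route buys an explicit constant $4\pi$ and, more importantly, naturally handles vectors that are only \emph{approximately} unit norm, i.e., all representable vectors whose norm falls in $[1-\epsilon,1]$. That is the one caveat worth flagging in your write-up: your argument leans on the exact identity $d_1^2+d_2^2+d_3^2=1$, which true floating-point unit vectors generally satisfy only approximately. If one adopts the shell model, the third coordinate is no longer pinned down to two values per pair $(d_1,d_2)$ (near the equator it can range over $\mathcal{O}(1/\sqrt{\epsilon})$ representable values), and recovering the $\mathcal{O}(1/\epsilon^2)$ bound requires a slightly more careful partition of the pairs; your exact-norm interpretation sidesteps this and is a legitimate reading of the lemma, so the proof stands as written.
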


\begin{proof}
    The proof is provided in Appendix \ref{appE:lemma1}.
\end{proof}

Now that we have a finite number representing the cardinality for the set $(P \times D)$, we can calculate the probability of getting the optimal solution. Let us assume that we have sampled $n$ camera configurations randomly, and we intend to find the probability of finding the optimal solution. Without loss of generality, we assume that the resource budget $\beta$ represents the maximum number of cameras that can be placed in the network, implying a unit cost for each camera configuration ($f_{pd} =1$). While we consider a unit cost for simplicity, this analysis can be extended to scenarios with variable configuration costs, leading to tighter bounds.

As the sample space of possible camera configurations is discretized, the problem is now equivalent to finding the optimal subset of camera configurations with cardinality $\beta$, which provides the maximum coverage. There is at least one subset of camera configurations which achieves this maximum coverage as the coverage objective function is bounded by $|V|$. Suppose we sample $n$ configurations. In the next part, we compute the probability that this sample of size $n$ contains the optimal solution. \\

\begin{proposition}
    The probability that a randomly sampled set of $n$ configurations contains the set of optimal configurations with cardinality \(\beta\) is at least $\lt(\frac{n - (\beta - 1)}{|P \times D|}\rt)^\beta$.
\end{proposition}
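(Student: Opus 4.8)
The plan is to fix one optimal configuration set $S^\star \subseteq P \times D$ with $|S^\star| = \beta$ (its existence is guaranteed, as noted just before the statement, because the coverage objective is bounded by $|V|$), and to compute the probability that a random sample of $n$ configurations contains all $\beta$ elements of $S^\star$. I would obtain this probability exactly and then bound it from below; the ``at least'' in the statement arises precisely from that final estimate, so the bulk of the argument is a clean combinatorial identity followed by an elementary inequality.

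First I would model the draw of $n$ configurations as selecting an $n$-element subset uniformly at random, without replacement, from the universe of $N := |P \times D|$ configurations, which is finite by Lemma 1. Under this model the probability that the sample contains the fixed set $S^\star$ is the hypergeometric ratio
$$\frac{\binom{N-\beta}{\,n-\beta\,}}{\binom{N}{n}} = \prod_{i=0}^{\beta-1}\frac{n-i}{N-i},$$
where the right-hand side follows by cancelling factorials into a ratio of falling factorials. This exact expression is the crux; everything afterward is a routine estimate.

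Next I would lower-bound each of the $\beta$ factors termwise. For $0 \le i \le \beta-1$ we have $N - i \le N$, so $\frac{n-i}{N-i} \ge \frac{n-i}{N}$, and since $n - i \ge n - (\beta-1)$ in this range, each factor is at least $\frac{n-(\beta-1)}{N}$. Taking the product over all $\beta$ factors then yields
$$\prod_{i=0}^{\beta-1}\frac{n-i}{N-i} \ge \lt(\frac{n-(\beta-1)}{N}\rt)^\beta = \lt(\frac{n-(\beta-1)}{|P \times D|}\rt)^\beta,$$
which is the desired bound, implicitly assuming $n \ge \beta$ so that the factors are positive and the claim is non-vacuous.

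The computation itself is routine; the step that needs care is justifying the sampling model. I would need to argue that the random sampling procedure effectively induces a uniform draw over configurations, so that the hypergeometric count applies, and to settle whether the draw is with or without replacement. If it is with replacement, the clean falling-factorial identity no longer holds exactly, but an analogous union-bound or coupon-collector-style estimate delivers a bound of the same order. Thus I expect the main obstacle to be packaging the sampling mechanism precisely rather than establishing any deep inequality.
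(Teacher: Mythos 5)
Your proposal is correct and follows essentially the same route as the paper: both write the probability as the hypergeometric ratio $\binom{N-\beta}{n-\beta}/\binom{N}{n}$, reduce it to the product $\prod_{i=0}^{\beta-1}\frac{n-i}{N-i}$, and bound each factor termwise by $\frac{n-(\beta-1)}{N}$. The paper likewise assumes sampling without replacement, so your added care about the sampling model (and the implicit $n \ge \beta$ requirement) is a minor refinement rather than a different argument.
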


\begin{proof}
    As we are sampling without replacement, we calculate the probability that the sample set of camera configurations contains the specific $\beta$ camera configurations that provide the optimal solution. Consider random variable $L$, which represents the number of optimal camera configurations chosen when $n$ camera configurations are chosen at random from a population of $|P \times D|$ configurations. $L$ can be represented by a hypergeometric distribution with parameters $N = |P \times D|$, $K = \beta$. So, the probability of finding the optimal solution is equivalent to $Pr(L = \beta)$, which is given as:
    \begin{align*}
        Pr(L = \beta) &= \frac{{|P \times D| - \beta \choose n - \beta}}{{|P \times D| \choose n}} & \\
        &= \frac{\lt(|P \times D| - \beta \rt)!}{\lt(|P \times D| \rt)!} \cdot \frac{n!}{\lt(n - \beta \rt)!}  & \\
        &= \frac{\prod_{i=0}^{\beta-1} (n-i)}{\prod_{i=0}^{\beta-1} \lt(|P \times D|-i\rt)}
    \end{align*}

    Now, $(n-\beta+1) \leq (n-i)$ for all $ i \in \{0,1, \cdots, \beta - 1\}$ and $|P \times D| \geq \lt(|P \times D| - i \rt)$ for all $i \in \{0,1, \cdots, \beta - 1\}$. Therefore,

\begin{align*}
    Pr(L = \beta) &= \frac{\prod_{i=0}^{\beta-1} (n-i)}{\prod_{i=0}^{\beta-1} \lt(|P \times D|-i\rt)} \geq \lt( \frac{n-(\beta - 1)}{|P \times D|} \rt)^\beta,
\end{align*}
which completes the proof.

\end{proof}

Typically, $n \gg \beta$ in most realistic scenarios. Thus, this would mean $Pr(L=\beta) \approx \lt( \frac{n}{|P \times D|} \rt)^\beta$.

Finally, we calculate the expected number of samples needed to achieve the optimal solution. We assume we keep sampling until all samples from the optimal camera network are included in the set of sampled configurations.\\

\begin{proposition}
    The expected number of samples required in the sample set such that it contains the set of optimal configurations is $\frac{\beta}{\beta + 1} \cdot \lt(|P \times D| + 1 \rt)$.
\end{proposition}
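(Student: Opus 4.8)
The plan is to recognize this as the expectation of the position of the last ``success'' when drawing without replacement, i.e., a coupon-collector-type quantity for a finite population. I would model the sequential sampling as a uniformly random permutation of all $N := |P \times D|$ camera configurations, among which exactly $\beta$ form the target optimal set (whose existence of size $\beta$ was established just before the proposition, since the coverage objective is bounded by $|V|$). Let $T$ denote the number of draws needed until every optimal configuration has appeared; equivalently, $T$ is the position of the \emph{last} optimal configuration in the random permutation, i.e.\ the maximum of the $\beta$ positions occupied by optimal configurations.

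The key reduction I would use is to write $T = N - G$, where $G$ is the number of non-optimal configurations appearing strictly after the last optimal configuration. This identity holds because every item past position $T$ is non-optimal, so $N - T = G$ exactly. The advantage is that $\mathbb{E}[G]$ is computable by symmetry alone: for any fixed non-optimal configuration $c$, consider the relative order of $c$ and the $\beta$ optimal configurations. Among these $\beta + 1$ items, $c$ is equally likely to occupy any of the $\beta + 1$ relative ranks, so $c$ lands after all optimal configurations with probability $\tfrac{1}{\beta+1}$. Introducing an indicator for this event for each of the $N - \beta$ non-optimal configurations and applying linearity of expectation yields $\mathbb{E}[G] = \tfrac{N - \beta}{\beta+1}$.

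I would then conclude directly that $\mathbb{E}[T] = N - \tfrac{N-\beta}{\beta+1} = \tfrac{N(\beta+1) - (N-\beta)}{\beta+1} = \tfrac{\beta(N+1)}{\beta+1}$, and substitute $N = |P \times D|$ to recover the claimed value $\tfrac{\beta}{\beta+1}\lt(|P \times D| + 1\rt)$.

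I do not anticipate a serious obstacle; the only delicate point is justifying the symmetry claim $Pr(c \text{ appears after all } \beta \text{ optimal items}) = \tfrac{1}{\beta+1}$ cleanly, which rests on the exchangeability of a uniformly random permutation. As a purely computational backup that sidesteps the symmetry argument, I would instead use the exact distribution of the maximum, $Pr(T = t) = \binom{t-1}{\beta-1}/\binom{N}{\beta}$ for $t = \beta, \dots, N$, apply the identity $t\binom{t-1}{\beta-1} = \beta\binom{t}{\beta}$, and collapse the resulting sum via the hockey-stick identity $\sum_{t=\beta}^{N}\binom{t}{\beta} = \binom{N+1}{\beta+1}$, giving $\mathbb{E}[T] = \beta\binom{N+1}{\beta+1}/\binom{N}{\beta} = \tfrac{\beta(N+1)}{\beta+1}$, which matches the first approach.
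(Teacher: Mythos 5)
Your proof is correct, and your primary argument takes a genuinely different route from the paper's. The paper works directly with the random variable $M$ (the number of draws until the last optimal configuration appears), computes its probability mass function explicitly as $Pr(M=m) = \frac{\beta (|P \times D|-\beta)!(m-1)!}{|P \times D|!(m-\beta)!}$, and then evaluates $\mathbb{E}[M] = \sum_m m\, Pr(M=m)$ by rewriting the summand in terms of binomial coefficients and collapsing the sum with the hockey-stick identity $\sum_{m=\beta}^{N}\binom{m}{\beta} = \binom{N+1}{\beta+1}$ (cited to Andrews et al.). Your main argument instead embeds the sampling process in a uniformly random permutation, writes $T = N - G$ with $G$ the count of non-optimal items after the last optimal one, and computes $\mathbb{E}[G] = \frac{N-\beta}{\beta+1}$ by exchangeability plus linearity of expectation. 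This sidesteps both the explicit pmf and any combinatorial summation identity, so it is more elementary in its prerequisites and arguably more transparent about \emph{why} the answer has the form $\frac{\beta}{\beta+1}(N+1)$: the $\beta$ optimal items and the typical non-optimal item split the permutation into $\beta+1$ exchangeable gaps. The paper's computation, by contrast, delivers the full distribution of $M$ as a byproduct, which could be reused for variance or tail bounds. Your backup computation (pmf $\binom{t-1}{\beta-1}/\binom{N}{\beta}$, the identity $t\binom{t-1}{\beta-1} = \beta\binom{t}{\beta}$, then hockey-stick) is essentially the paper's own proof in a cleaner parameterization, so you have in effect produced both the paper's argument and a distinct, slicker one. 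The only point worth making explicit in a final write-up is the equivalence between sequential sampling without replacement and a uniformly random permutation, which justifies the exchangeability step; this is standard and does not constitute a gap.
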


\begin{proof}
    Let us define a random variable $M$ which represents the number of sample configurations sampled such that its subset includes all $\beta$ optimal camera configurations. If this set contains all of them, then the optimization framework will select them. Thus, the expected number of samples needed equals $\mathbb{E}[M]$, where $M \in \{\beta, \beta+1, \cdots, |P \times D|\}$.

Now, as we stop sampling after getting the optimal network in the sample set, the last sample chosen has to be part of the optimal network. Therefore, $Pr(M=m)$ can be defined as the probability of choosing $\beta-1$ configurations from the optimal set in the first $m-1$ configurations, and then choosing the final optimal configuration in the $m^{\text{th}}$ configuration.
\begin{align*}
    Pr(M=m) &= \frac{{\beta \choose {\beta-1}} \cdot {|P \times D| - \beta \choose {m - \beta}}}{{|P \times D| \choose {m - 1}}} \cdot \frac{1}{|P \times D| - m + 1} \\
    & = \frac{\beta (|P \times D|-\beta)!(m-1)!}{|P \times D|!(m-\beta)!}
\end{align*}

Now, we can calculate the expectation as:
\begin{align*}
    \mathbb{E}[M] &= \sum_{m = \beta}^{|P \times D|} m Pr(M=m) \\
    &= \frac{\beta \cdot \lt(|P \times D| - \beta \rt)!}{|P \times D|!} \sum_{m = \beta}^{|P \times D|} \frac{m!}{(m - \beta)!} \\
    &= \frac{\beta \cdot \beta! \lt(|P \times D| - \beta \rt)!}{|P \times D|!} \sum_{m = \beta}^{|P \times D|} \binom{m}{\beta}
\end{align*}

Utilizing the result $\sum_{m=a}^{b} \binom{m}{a} = \binom{b+1}{a+1}$ \citep{andrews_special_1998}, we get:

\begin{align*}
    \mathbb{E}[M] &= \frac{\beta \cdot \beta! \lt(|P \times D| - \beta \rt)!}{|P \times D|!} \sum_{m = \beta}^{|P \times D|} \binom{m}{\beta} \\
    &= \frac{\beta \cdot \beta! \lt(|P \times D| - \beta \rt)!}{|P \times D|!} \cdot \frac{\lt(|P \times D| + 1 \rt)!}{\lt(\beta + 1 \rt)! \lt(|P \times D| - \beta \rt)!} \\
    &= \frac{\beta}{\beta + 1} \cdot \lt(|P \times D| + 1 \rt)
\end{align*}

Hence, the proposition is proven.

\end{proof}

As observed, the value of $\epsilon$ is very small, thus the probability of reaching an optimal solution within feasible sampling budgets is very low, whereas the expected number of configurations to be sampled is very high. However, it should be noted that this value of $\epsilon$ is a strict lower bound. This is based on the fact that we cannot gather any other data regarding the function's discrete smoothness, i.e., how changing the camera configuration slightly will affect the corresponding coverage. We surmise that changing a camera view slightly changes visibility slightly, however, this smoothness is hard to quantify. It is our conjecture that the true value of $\epsilon$, which is a quantification of the minimum change in input that will lead to a unit change in output, is much greater than the floating-point precision that we have categorized it as.

Further, it should be noted that random samples in the theoretical analysis correspond to exploratory samples in the E\&E framework, and to random samples in the TUS framework. Thus, these propositions ensure that our sampling strategies guarantee convergence as long as there are exploratory samples and we sample without replacement.

This raises the question of why our strategies prioritize non-random sampling, despite the fact that exploratory random sampling ensures eventual convergence to an optimal solution. As we provide empirical evidence for in subsequent sections, our frameworks consistently outperform purely random sampling strategies, particularly in terms of reaching high-quality solutions more quickly. The advantage of non-random sampling lies in its ability to utilize environmental information and iterative solution information, allowing the strategies to efficiently navigate the solution space. While random sampling guarantees eventual convergence, its practicality diminishes due to the extended time required to achieve optimal solutions. We plan to investigate these dynamics more comprehensively in future research work.

\subsection{Optimization framework}

Here, we further prove a favorable property exhibited by our optimization framework. Under our framework, we can prove that the solution after iteration $k+1$ will be at least as good as the solution after iteration $k$. This is because, even though the objective remains constant, we keep adding new variables. Let us assume that the set of sample camera configurations at iteration $k$ is $PD^k$. As the algorithm relies on adding new samples at every iteration, $|PD^{k+1}| > |PD^{k}|$. This allows us to prove the following proposition. \\

\begin{proposition}
    Suppose we obtain the camera network by following the optimization process and employing one of the proposed solution algorithms. Then, the coverage improves monotonically. Furthermore, if we choose a sufficiently large sample set such that \(PD^k \to PD\) and solve the IP to optimality, we are guaranteed to reach true optimality.
\end{proposition}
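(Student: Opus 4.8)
The plan is to establish the two claims separately: first, monotone improvement of the achieved coverage across iterations, and second, attainment of the true optimum once the candidate set exhausts the (finite) configuration space. Throughout, the key structural fact is that the adaptive sampling only ever \emph{appends} configurations, so the candidate sets are nested, $PD^{k} \subseteq PD^{k+1}$, with the programs $IP(PD^{k}, V, \beta, \{f_{pd}\})$ and $IP(PD^{k+1}, V, \beta, \{f_{pd}\})$ sharing the same objective, budget $\beta$, and voxel set $V$.

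For monotonicity, I would first show that every feasible point of the iteration-$k$ program lifts to a feasible point of the iteration-$(k+1)$ program with the same objective value, by \emph{zero-padding}: given an optimal $(x^{k}, y^{k})$ for $IP(PD^{k})$, set $x_{pd} = 0$ for each newly added configuration $(p,d) \in PD^{k+1} \setminus PD^{k}$ and leave $y^{k}$ unchanged. The budget constraint (2) and the locale constraint (4) are unaffected because the appended variables are zero; the coverage constraints (3) still hold because enlarging $PD'_v$ only appends nonnegative terms, so $y_v - \sum_{(p,d) \in PD'_v} x_{pd} \le 0$ is preserved. Hence the lifted point is feasible for $IP(PD^{k+1})$, and since we maximize the identical objective over a superset of (projected) feasible solutions, $\mathrm{OPT}\lt(IP(PD^{k+1})\rt) \ge \mathrm{OPT}\lt(IP(PD^{k})\rt)$. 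For the greedy variant, which is not exact, monotonicity instead follows from retaining the previous incumbent: the iteration-$k$ network remains feasible at iteration $k+1$ by the same lifting, so warm-starting from it and keeping the best-so-far solution guarantees the reported coverage never decreases.

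For convergence to true optimality, I would combine this monotonicity with two facts. By Lemma 1 the discretized configuration space $P \times D$ has finite cardinality of order $|P|/\epsilon^2$; since each iteration appends at least one previously unsampled configuration and we sample without replacement (so $|PD^{k+1}| > |PD^{k}|$), after finitely many iterations $PD^{k} = P \times D$, at which point $IP(PD^{k})$ \emph{is} the full problem and its optimal solution is exactly the global optimum. The achieved coverage is thus a nondecreasing sequence bounded above by $|V|$ that attains the global optimum in finite time; combined with Propositions 2 and 3, which guarantee that the optimal $\beta$-configuration set enters the sample with probability tending to $1$ and in expectation after $\frac{\beta}{\beta+1}\lt(|P \times D| + 1\rt)$ samples, this yields convergence to the true optimum under the adaptive scheme.

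The main obstacle is the feasibility-lifting step, and specifically verifying that appending columns never \emph{tightens} any constraint. The delicate case is the coverage constraint (3): one must argue that although $PD'_v$ grows — which lets $y_v$ potentially rise — this can only relax the constraint and enable additional coverage, never render the old solution infeasible. Care is also needed in pinning down the meaning of ``true optimality'': here it denotes optimality over the finite floating-point discretization of $P \times D$ supplied by Lemma 1, so the continuous direction set is already replaced by its representable values, and the finite-termination argument then delivers exactness rather than a mere limit.
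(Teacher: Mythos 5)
Your proposal is correct and follows essentially the same route as the paper's proof: lift the iteration-$k$ solution into the iteration-$(k+1)$ program by zero-padding the new variables, verify that constraints (2)--(4) remain satisfied (the coverage constraint only relaxes since $PD^k_v \subseteq PD^{k+1}_v$), and conclude monotonicity; then use finiteness of the discretized $P \times D$ together with sampling without replacement to get $PD^k \to P \times D$ in finitely many iterations, so that solving the resulting full master problem to optimality yields the true optimum. Your explicit treatment of the greedy/non-optimal-solve case via incumbent retention matches the paper's warm-starting remark, and your invocation of Lemma 1 simply makes precise what the paper phrases as ``finite within computer specifications.''
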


\begin{proof}
    Consider $IP(PD^k, V, \beta, \{f_{pd}\})$ at iteration $k$ as defined in Section \ref{section:3.2.1-model-formulation}. It can be expanded as:
\[
\begin{array}{llll}
    \max & \sum_{v \in V} y_v  && (1) \\
    \text{s.t.} &&& \\
    \text{Camera cost:} & \sum_{(p,d) \in PD^k} f_{pd} x_{pd} \le \beta && (2) \\
    \text{Voxel coverage:} & y_v - \sum_{(p,d) \in PD^k_v} x_{pd} \le 0 &\quad \forall v \in V & (3) \\
    \text{One camera per locale:} & \sum_{(p, d) \in PD^{k\text{adj}}_p} x_{pd} \le 1 &\quad \forall (p,d) \in PD^k & (4) \\
    \text{Integrality:} & x_{pd} \in \{0, 1\} &\quad \forall (p,d) \in PD^k & (5\text{a}) \\
    \text{Integrality:} & y_{v} \in \{0, 1\} &\quad \forall v \in V & (5\text{b})
\end{array}
\] \\

Let $x_{pd}^{k*}$, $y_v^{k*}$ represent the values obtained after solving the IP at iteration $k$. Note that it necessarily does not have to be solved to optimality. At iteration $k+1$, $PD^k \subset PD^{k+1}$, $PD_p^{\text{adj}^k} \subseteq PD_p^{\text{adj}^{k+1}} \quad \forall p \in P$ and $PD^k_v \subseteq PD^{k+1}_v$. This means all constraints are satisfied by setting the values of the IP in iteration $(k+1)$ based on the following rule: $x_{pd}^{k+1} = x_{pd}^{k*} \quad \forall (p,d) \in PD^{k}$ and $x_{pd}^{k+1} = 0 \quad \forall (p,d) \in PD^{k+1} \setminus PD^{k+1}$ and $y_v^{k+1} = y_v^{k*} \quad \forall v \in V$. Thus, the solution obtained in iteration $k$ is a lower bound on the optimal solution obtained in iteration $k+1$.

Further, as we are sampling without replacement and the set $P \times D$ is finite within computer specifications, $PD^k \xrightarrow[]{} |P \times D|$ in finite number of iterations. As that is the case, the corresponding $IP(|P \times D|, V, \beta, \{f_{pd}\})$ represents the full master problem in a column generation framework. This nature of the IP was briefly discussed in Section \ref{section:3.2.1-model-formulation}. That means if this IP is solved to optimality, we will have, within computer accuracy, the true optimal solution to the original problem.
\end{proof}

As briefly mentioned earlier, we use the solution from the previous iteration to warm-start the optimization algorithm in the next iteration. This ensures that even if we are not solving the IP optimally at every iteration, we ensure the monotonicity property of the algorithmic framework, i.e. coverage will either remain the same or improve at every iteration as the optimizer will not revert to a worse solution. Furthermore, warm-starting helps in pruning the branch-and-bound tree in traditional optimizers and helps speed up the process of finding the best solution network given the available set of configurations.

\section{Case study: assessing solution algorithms in a large and complex real-world setting} \label{section:6-case-study}

This section focuses on a case study, where the two adaptive sampling solution algorithms are implemented on a real-life 3D environment. The main theme of the case study is to perform a high-resource budget and low-resource budget coverage of the environment to gain insights into the solutions obtained. Further, it demonstrates the ability of our algorithms to find high-quality solutions to problems of real-world size and complexity, in scenarios that are better suited for them. We explain in Section \ref{section:7-algo-perform} the reasons for that. Furthermore, based on the model solutions, we are able to provide insights on how conventional wisdom for camera placement corresponds to and differs from the optimal solutions that our framework identifies.

The case study is conducted for `apartment\_0' in the REPLICA Dataset (see Figure \ref{fig5:case-study-environment}), provided by Meta \citep{straub_replica_2019}. It is a duplex apartment characterized by three bedrooms, three bathrooms, one dining room, two living rooms, one study room, one kitchen, and two staircases. 

There are several complex features of this environment that make it challenging. The apartment is segmented by multiple partitions. Each bathroom is partitioned separately from the bedrooms, and multiple walls divide the various rooms, with each bedroom facing a different direction. Additionally, several doorways and corridors obstruct complete visibility from one room to the next.

\begin{figure}[h]
    \centering
    \subfigure[]{%
        \includegraphics[width=0.7\textwidth]{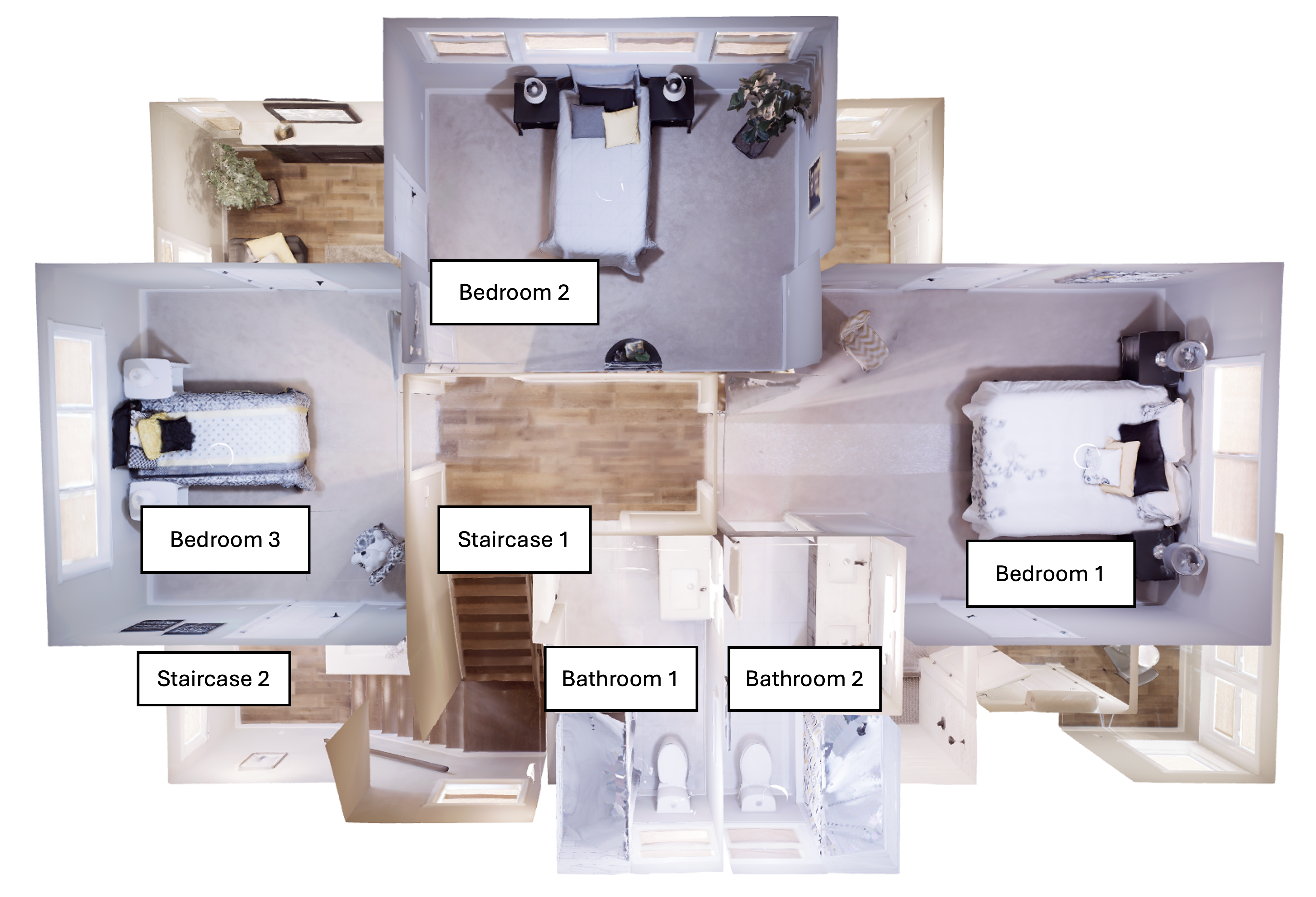}
        \label{fig5a:case-study-environment}%
    }\\[1em]
    
    \subfigure[]{%
        \includegraphics[width=0.7\textwidth]{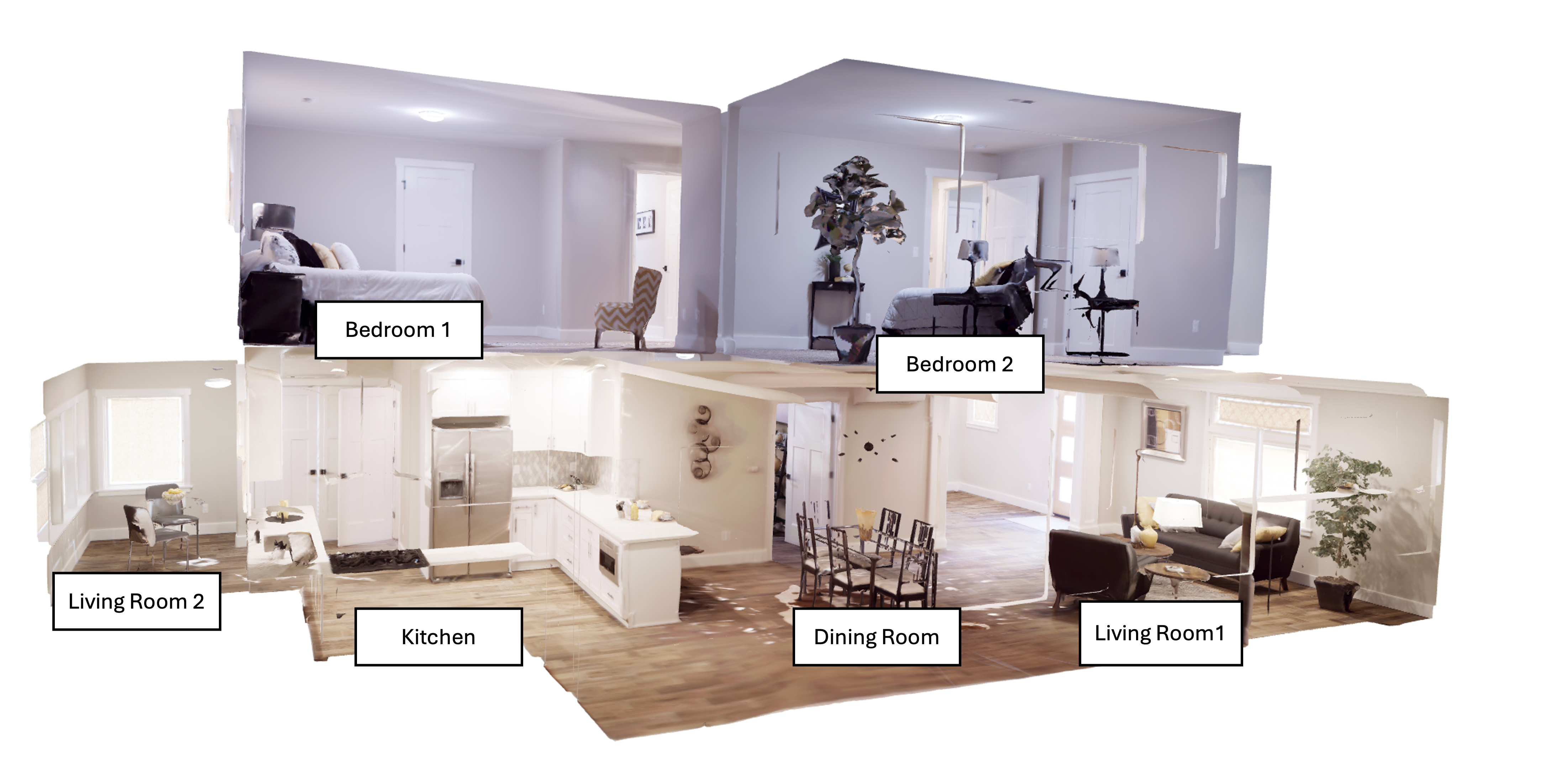}
        \label{fig5b:case-study-environment}%
    }\\[1em]
    
    \subfigure[]{%
        \includegraphics[width=0.7\textwidth]{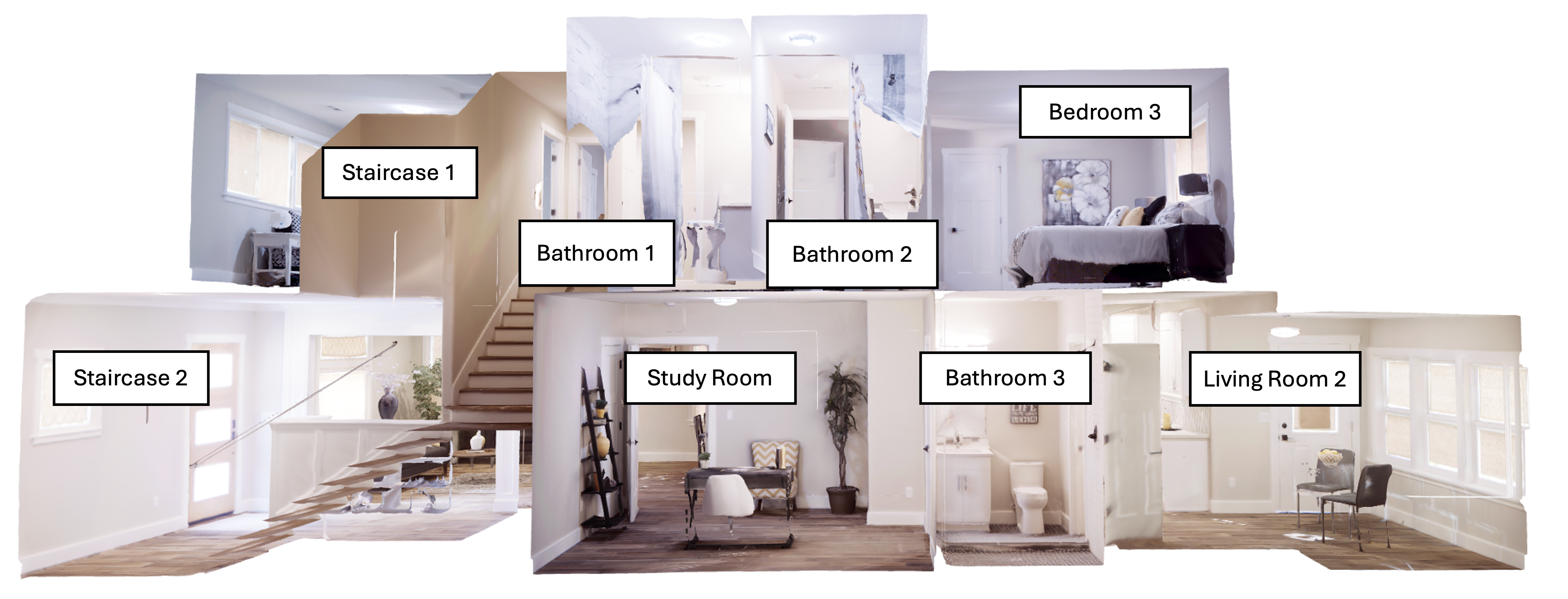}
        \label{fig5c:case-study-environment}%
    }
    \caption{Top (Part (a)), front (Part (b)), and back (Part (c)) views of the duplex apartment `apartment\_0'. The top view highlights the rooms on the top level, including bedrooms, bathrooms, and staircases (top). The front view shows the top-level bedrooms along with lower-level living spaces, including living rooms, the dining room, and the kitchen, with visibility impeded by diagonal walls (middle). The back view shows smaller lower-level spaces (study room, bathroom 3, living room 2) and their relation to top-level rooms (bottom)}
    \label{fig5:case-study-environment}
\end{figure}

\subsection{Pre-processing}

There are two types of pre-processing required. The 3D mesh object is highly granular with over $4.5 \times 10^6$ vertices and $9 \times 10^6$ triangles. All proceeding analysis has been conducted using Python.

The region is pre-processed using quadric decimation \citep{garland_surface_1997} to make the dimensions more manageable, reducing it to $5 \times 10^5$ vertices and $10^6$ triangles. This ensures maintaining the proper 3D shape and structure of the environment while easing the computational burden of visibility calculations. The primary issue lies in conducting the obstacle checking of the camera visibility. This process ensures that the number of ray-triangle intersections considered is manageable.

Furthermore, the free space grid is calculated such that every voxel is a size of 1 ft$^3$ for fine positioning of the camera. There are a total of 5,985 free space voxels. This is not equal to the volume of the apartment as a lot of the space is occupied by objects. We have implemented a standard pre-processing script that can create a voxelized grid structure for any environment.

\subsection{Scenario 1: high-resource budget performance of E\&E strategy}

Based on the algorithm performance results that we will cover in Section \ref{section:7-algo-perform}, we find that the E\&E strategy is a very efficient solution algorithm compared to the random sampling strategy, both in overall coverage and speed of achieving said coverage. The TUS strategy performance is only strong in certain contexts, such as open environments with a low budget. This is why we choose to perform the high-resource budget coverage maximization using the E\&E strategy, as TUS is not expected to perform well in this case. 

\subsubsection{Resource budget}

Given that there are three bedrooms and eight other rooms, we consider a camera resource budget of 6, or approximately 1 camera per two rooms. This ties in well with the other consideration of a high resource budget of 1 per $10^3$ free space voxels, and low resource budget of 1 per $2 \times 10^3$ free space voxels, as there are approximately 6,000 voxels of free space. It should be noted that the model complexity is not severely affected by the resource budget. It is more significantly affected by the number of free space voxels. However, the sampling algorithm is affected by the resource budget.

\subsubsection{Best configurations}

We present a visual description of the coverage achieved in Figure \ref{fig6:overall-coverage-high-resource-budget}. The uncovered voxels are clustered around the two bathrooms at the top, near the far edges of the bedrooms, and to some degree around the staircase.

\begin{figure}[h]
    \centering
    \subfigure[]{\includegraphics[width=0.9\textwidth]{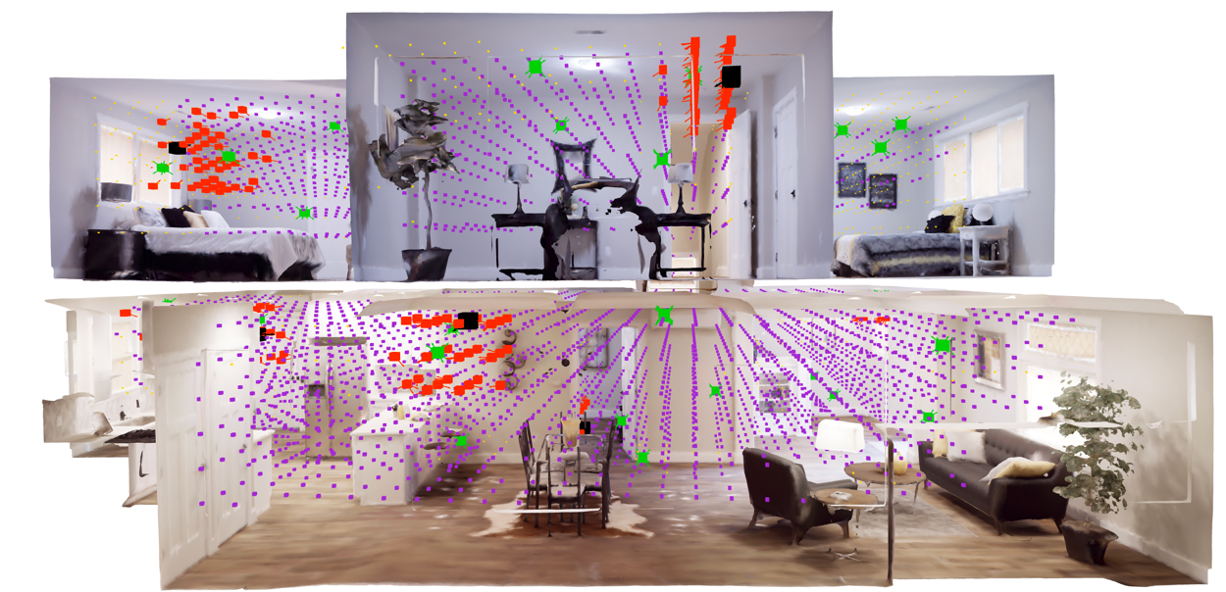} \label{fig6a}}
    
    \vspace{0.5em} 
    
    \subfigure[]{\includegraphics[width=0.9\textwidth]{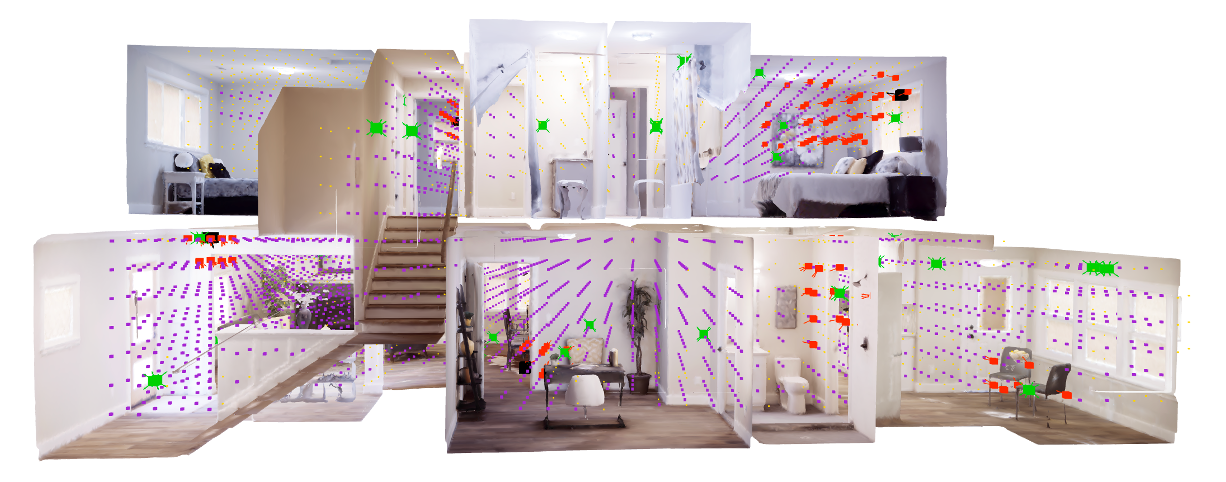} \label{fig6b}}
    
    \caption{This is a panel of two images, where the top image (Part (a)) gives a frontal view of the coverage achieved, and the bottom image (Part (b)) provides a back view (refer to Figure \ref{fig3b:legend-box} for legend). The uncovered regions are clustered around the two bathrooms at the top, near the far edges of the bedrooms, and to some degree around the staircase.}
    \label{fig6:overall-coverage-high-resource-budget}
\end{figure}

\begin{figure}[h]
    \centering
    \subfigure[]{\includegraphics[width=0.22\textwidth]{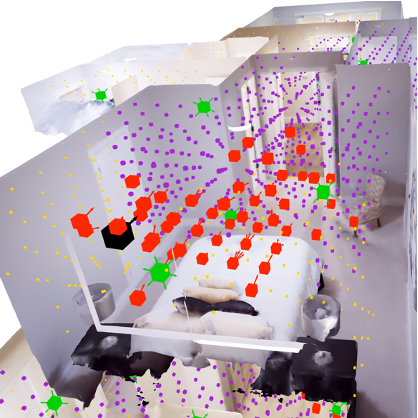} \label{fig7a}}
    \hspace{0.5em}
    \subfigure[]{\includegraphics[width=0.22\textwidth]{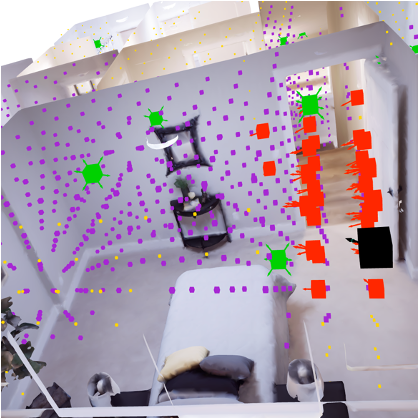} \label{fig7b}}
    \hspace{0.5em}
    \subfigure[]{\includegraphics[width=0.46\textwidth]{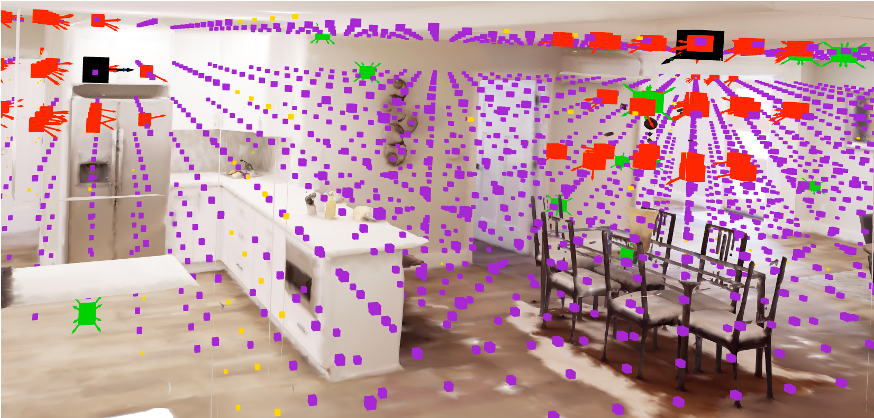} \label{fig7c}}
    
    \vspace{1em} 
    
    \subfigure[]{\includegraphics[width=0.22\textwidth]{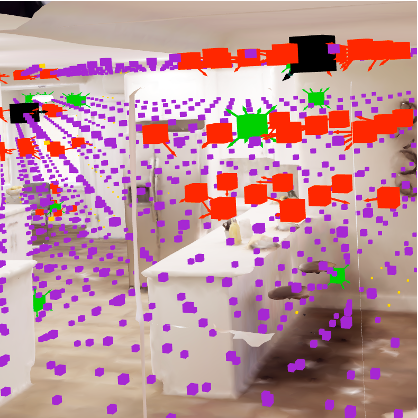} \label{fig7d}}
    \hspace{0.5em}
    \subfigure[]{\includegraphics[width=0.22\textwidth]{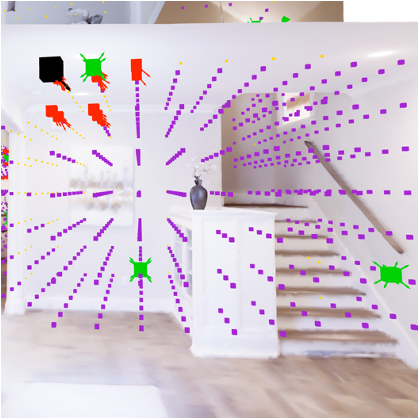} \label{fig7e}}
    \hspace{0.5em}
    \subfigure[]{\includegraphics[width=0.265\textwidth]{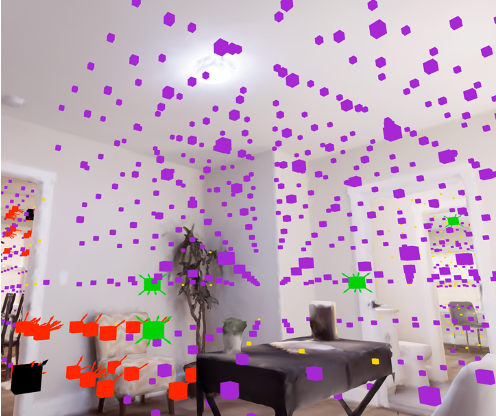} \label{fig7f}}
    \hspace{7em}
    \caption{This is a $2 \times 3$ panel of the six camera configurations as chosen by the E\&E algorithm (refer to Figure \ref{fig3b:legend-box} for legend). The individual camera configurations and their coverages are presented here. (Disambiguation: camera configuration c is the camera on the left in the third panel. Similarly, camera configuration d is the camera on the left in the fourth panel.)}
    \label{fig7:best-configurations-high-resource-budget}
\end{figure}

The optimal configurations and their coverage are described below. The configurations can be observed in Figure \ref{fig7:best-configurations-high-resource-budget}.

\textbf{Camera configuration a:} It is placed at the top in the master bedroom. However, it is purposefully not located at a corner point but rather along the middle of the edge, facing straight ahead, so that it has an unobstructed view to the bedroom in front as well. This is why the third bedroom does not have a camera located in it.

\textbf{Camera configuration b:} This camera is in the bedroom facing the staircase. It is angled such that it has a clear view of the whole room and can view as much of staircase 1 as possible.

\textbf{Camera configuration c:} There is one camera placed in the kitchen at the top, angling downwards. It is placed in such a way that it is able to view the entirety of the dining room and a large portion of the unobstructed living room 1 as well.

\textbf{Camera configuration d:} This camera is placed in the larger living room and it seems like it is facing camera configuration c, which makes it redundant. However, doing that allows it to cover one section of the living room which cannot be covered by camera configuration c, and allows it to view a significant part of the second living room.

\textbf{Camera configuration e:} This camera is placed at the top near the bottom of the second staircase, placed in such a way that allows it access to the adjoining living room through a doorway.

\textbf{Camera configuration f:} This is the only position which is mildly odd from a conventional perspective but implements the objective as it is supposed to. It is placed at the bottom of the study room, to capture the entire study room, and look out of the doorway to capture part of the living room.

\subsubsection{Comparison with conventional wisdom for camera placement} 

We placed no constraints on where the cameras could be placed within the free space, i.e., the camera positions were not forced to be at the tops of rooms or near the corners, and they could be placed on the floor as well. Our optimization framework helped us determine where to place the cameras without any preconceived notions.

The main uncovered spaces are the bathrooms, which is because they are small spaces, and this follows conventional intuition. There are small free spaces along the ends, which are uncovered. Due to minor pre-processing errors, a small fraction of the free space voxels are outside the apartment, and they do not need to be covered.

Five of the six cameras are placed along or near the top wall independently. Even though the exploratory camera configurations are sampled from the middle, the algorithm iteratively gravitates the chosen configurations towards the top positions.

One convention that the algorithm's solution to the case study breaks from compared to standard camera placements is that it is not necessary to place cameras in room corners or even along edges to maximize coverage, though they are mostly near or along the ceiling or the surface of a wall. The exploited and exploratory configurations clearly show that although some of the configurations considered were along corners, they were ultimately rejected in favor of alternatives that offered superior coverage.

Another feature of the model solution which defies conventional strategies is that two of the cameras essentially face each other. Camera configuration c and camera configuration d are facing each other partially but they cover some different regions.

An important insight that we gain is that when covering large partitioned spaces, such as the main bedrooms or the living room, the algorithm selects camera configurations that maximize coverage of the open area while directing peripheral vision toward corridors and distant spaces for additional coverage. The utility of a camera position increases significantly when it is strategically placed to offer some view into adjoining spaces, particularly doorways and corridors, enabling maximum coverage.

\subsubsection{Performance metrics}

The analysis was carried out using the best hyperparameters set for the E\&E strategy, as estimated empirically in Appendix \ref{app:hyper_tuning}. We set the sampling budget at 800 configurations, spread over 10 iterations, with an exploit fraction of 0.6. This corresponds to 4 exploratory positions per iteration, and 8 angles sampled per voxel per iteration.

The performance metrics for this run can be summarized as follows. The total processing time for visibility calculations is 365.70 seconds, and the total optimization time is 7.96 seconds. The total free space is 5,985 voxels, and the final coverage achieved is 4,427 voxels (74\% of the free space). The summary statistics for coverage of each camera are given in Table \ref{table1:camera_coverage_ee}.
\begin{table}[htb]
    \centering
    \caption{Camera configurations and corresponding coverage statistics for the E\&E algorithm. It provides the optimal solution for a resource budget of 6 cameras and details the network coverage provided, number of voxels covered, and overcoverage, i.e., voxels covered more than once.}
    
    \begin{tabular}{p{3.5cm}p{4cm}p{3cm}p{2.6cm}}
        \hline
        \raggedright \textbf{Camera Configuration} & \textbf{Camera Network Coverage Provided (\%)} & \raggedright \textbf{Number of Voxels Covered} & \raggedright \textbf{Voxels Covered More than Once} \\
        \hline
        Camera configuration a & 13.8\% & 610 & 25 \\
        
        Camera configuration b & 12.7\% & 561 & 45 \\
        
        Camera configuration c & 36.2\% & 1603 & 295 \\
        
        Camera configuration d & 22.8\% & 1008 & 291 \\
        
        Camera configuration e & 10.4\% & 459 & 24 \\
        
        Camera configuration f & 11.9\% & 526 & 0 \\
        \hline
    \end{tabular}
    
    \label{table1:camera_coverage_ee}
\end{table}

Intuitively, the optimal camera network should avoid having too much overcoverage (i.e., voxels that are covered by multiple cameras), since overcoverage suggests that cameras are functioning redundantly and might be better positioned or oriented elsewhere. We see that overcoverage is limited in the solution described in Table \ref{table1:camera_coverage_ee}, as it applies to only a small fraction of the total voxels covered by each camera configuration. Only configurations c and d have overcoverage of more than 10\% of their visible voxels. This is a result of them covering large spaces on the lower level and the fact that they partially face each other.

\subsection{Scenario 2: low-resource budget performance of TUS strategy}

We test the TUS strategy in a low-resource budget scenario using the same environment as the previous case study. We contrast this with a high-resource budget case study utilizing the E\&E strategy, which provides new insights into optimal camera placement. We find under the best selected network, the three cameras are placed to cover large open spaces like the living room and kitchen. Rigorous algorithmic results, as discussed in Section \ref{section:7-algo-perform}, show that while the TUS strategy performs well in open environments with limited resources, its effectiveness is not on par with the E\&E strategy. We discuss the detailed analysis, including performance metrics, camera configurations, and voxel coverage, in Appendix \ref{app:scenario-2}.

\section{Algorithm performance} \label{section:7-algo-perform}

In this section, we compare our algorithmic components with the benchmark algorithms to evaluate how well they perform. The adaptive sampling strategies are compared against the RS strategy, which was described in Section \ref{section:4.4.1-random-select}. The optimization framework of the modified maximum $k$-coverage IP model is compared against its greedy heuristic counterpart as described in Section \ref{section:4.4.2-greedy-heuristic}.

To benchmark these components, we have developed sanitized, custom environments. We compare the strategies' performance over multiple scenarios. First, we consider two sizes for the environments: large-sized spaces and medium-sized spaces. Second, there are two categories of each space considered, type 1 (more obstructed) and type 2 (less obstructed). Finally, we consider two different resource budgets: a high-budget scenario and a low-budget scenario for every space size and space type. A more thorough description of how we construct custom environments is provided in Appendix \ref{app:room_gen}. These environments allow us to test these models under different scenarios and hyperparameters. The best hyperparameters, based on the results obtained in Appendix \ref{app:hyper_tuning}, have then been used to test the strategies to compare final model performance.

To account for the inherent stochasticity in the algorithms, which affects model performance and solution time, we repeat each experiment over each specific environment and resource budget five times. This results in five trials with varying samples. This approach allows us to evaluate the variability in coverage and computational performance across different hyperparameter sets, thereby assessing the robustness of the algorithms.

For this performance comparison, all algorithms were implemented on a shared Ubuntu 22.04.5 LTS system with CUDA support, using Python.\footnote{The codebase will be publicly released following the manuscript's publication.} Computations were run on a machine equipped with an Intel Xeon Gold 6426Y processor, which has 64 CPU cores and two NUMA nodes. Each run was allocated 4 cores and 16 GB of RAM. The available GPU for computations was an NVIDIA L40S with driver version 550.120 and CUDA 12.4. Memory available on the system totaled 251 GB, supplemented by 15 GB of swap memory, with actual allocations varying based on usage demands.

\subsection{Comparison of adaptive sampling strategies with random search}

In the benchmark sampling strategy, none of the new samples is chosen using environmental information or based on previous iteration solutions. It is a useful benchmark to compare against because it allows us to compare how well the adaptive sampling strategies leverage environmental and coverage information provided to them. We will compare the sampling strategies based on solution time, coverage achieved, and efficiency, given the same environment, resource budget, and sampling budget.

\subsubsection{Final solution comparison}

In this section, we discuss how the final solutions vary amongst the three algorithms. Table \ref{table3:sampling_strategy_summary} presents a comparative analysis of the two adaptive sampling strategies, E\&E and TUS, across the different spaces and resource budgets. It reports key metrics, including the total number of voxels covered (which is our proxy for space coverage), pre-processing time (total time taken to conduct visibility calculations and sample configurations) and runtime (total time taken for optimization), and the range of coverage achieved, highlighting performance variations between high and low resource budgets. The table also includes improvement percentages, showcasing how E\&E and TUS fare relative to RS, providing a comprehensive view of how each strategy balances coverage efficiency and computational cost under different experimental conditions.

The E\&E sampling strategy, optimized with the best set of hyperparameters, consistently achieves higher minimum, maximum, and mean coverage over all repeat trials than the other two strategies, TUS and RS. For instance, in large spaces (type 2) with a low resource budget, the E\&E strategy achieves a 16.0\% improvement over the RS strategy. The range of improvements across all scenarios varies from 3.3\% to 16.0\%, which is significant. Furthermore, given that the number of visibility calculations (the time-consuming step) is the same in both algorithms, the pre-processing time is higher for the E\&E strategy compared to the RS strategy while providing much better coverage. This is expected because of the iterative nature of the sampling strategy.

The results for the TUS strategy indicate that this strategy performs better than the RS strategy in specific contexts: those with a low camera budget and fairly unobstructed environment. It outperforms the benchmark in the large space (type 2) and medium space (type 2) with the low resource budget by 9.2\% and 6.9\%, respectively. In other scenarios, it performs similar to the RS strategy ($\sim1\%$ in all cases except one), suggesting that the TUS strategy does not effectively leverage the environmental information and previous iteration's information in these cases. While comparing runtimes for the TUS strategy, we find that they are comparable to the runtimes obtained for the E\&E strategy, suggesting the E\&E sampling strategy is a better option overall.

In terms of pre-processing time and runtime, the RS strategy generally requires the least pre-processing time, such as in small spaces with a low resource budget where the RS strategy has a pre-processing time of 16.91s compared to 31.13s for E\&E. However, this comes at the cost of lower coverage, as mean coverage for the RS strategy in this scenario is 49.5\%, while E\&E reaches 54.6\% for an improvement of 10\%. E\&E's iterative nature results in longer pre-processing time and runtime but yields better overall coverage.

\begin{sidewaystable}
\centering
\tiny
\caption{Comparison of sampling strategies with different resource budgets and space types. The first four columns describe the 3D environment and the resource budget on which the algorithms have been tested. The fifth column lists the sampling strategies. The sixth column represents voxel coverage with different sampled sets. The rest of the columns provide mean summary statistics for the five different sample runs considered for each strategy, 3D environment, and resource budget.} 

\begin{tabular}{@{}p{1.2cm}p{1cm}p{1.3cm}p{1.5cm}p{1.0cm}p{1.9cm}p{1.3cm}p{1.1cm}p{1.12cm}p{1.1cm}p{1.2cm}p{1.55cm}@{}}

\toprule

\textbf{Space Size} & \textbf{Space Type} & \textbf{Free Space Voxels} & \textbf{Resource Budget Type} & \textbf{Sampling Strategy} & \textbf{Voxels Covered (All Trials)} & \textbf{Mean Pre-processing time (s)} & \textbf{Mean Runtime (s)} & \textbf{Min Coverage} & \textbf{Max Coverage} & \textbf{Mean Coverage} & \textbf{Mean Improvement over Benchmark} \\

\midrule
&  &  &  & RS & [4638, 4814, 4770, 4583, 4692] & 27.18 & 0.67 & 85.0\% & 89.3\% & 87.2\% & - \\
Large & 1 & 5391 & High & E\&E & [4868, 4722, 4857, 4855, 4983] & 42.39 & 3.76 & 87.6\% & 92.4\% & 90.1\% & \textbf{3.33\%} \\
 &  &  &  & TUS & [4575, 4327, 4643, 4575, 4643] & 55.21 & 3.39 & 80.0\% & 86.0\% & 84.4\% & -3.21\% \\
 \midrule
 &  &  &  & RS & [2498, 2736, 2681, 2532, 2705] & 27.14 & 0.87 & 46.3\% & 50.8\% & 48.8\% & - \\
Large & 1 & 5391 & Low & E\&E & [2689, 2888, 2807, 2704, 2899] & 44.97 & 4.80 & 49.9\% & 53.8\% & 51.9\% & \textbf{6.35\%} \\
 &  &  &  & TUS & [2540, 2482, 2513, 2755, 2616] & 54.28 & 3.56 & 46.0\% & 51.0\% & 48.0\% & -1.64\% \\
 \midrule
 &  &  &  & RS & [4695, 4898, 4840, 4666, 4777] & 27.19 & 0.73 & 86.6\% & 90.9\% & 88.6\% & - \\
Large & 2 & 5391 & High & E\&E & [5138, 5129, 5212, 5055, 5185] & 43.70 & 4.00 & 93.8\% & 96.7\% & 95.4\% & \textbf{7.67\%} \\
 &  &  &  & TUS & [4865, 4801, 4865, 4893, 4770] & 55.54 & 3.45 & 88.0\% & 91.0\% & 89.6\% & 1.13\% \\
\midrule
 &  &  &  & RS & [2597, 2912, 2976, 2758, 2877] & 26.42 & 0.93 & 48.2\% & 55.2\% & 52.4\% & - \\
Large & 2 & 5391 & Low & E\&E & [3293, 3310, 3230, 3307, 3258] & 45.03 & 7.32 & 59.9\% & 61.4\% & 60.8\% & \textbf{16.03\%} \\
 &  &  &  & TUS & [3080, 3000, 3111, 3140, 3068] & 54.07 & 3.62 & 56.0\% & 58.0\% & 57.2\% & 9.16\% \\
 \midrule
 &  &  &  & RS & [2530, 2419, 2412, 2546, 2466] & 16.68 & 0.44 & 88.4\% & 93.4\% & 90.7\% & - \\
Medium & 1 & 2727 & High & E\&E & [2549, 2582, 2582, 2641, 2572] & 28.81 & 6.46 & 93.5\% & 96.8\% & 94.8\% & \textbf{4.52\%} \\
 &  &  &  & TUS & [2467, 2289, 2516, 2449, 2527] & 28.25 & 2.40 & 84.0\% & 93.0\% & 89.8\% & -0.99\% \\
 \midrule
 &  &  &  & RS & [1331, 1378, 1358, 1401, 1282] & 16.91 & 0.46 & 47.0\% & 51.4\% & 49.5\% & - \\
Medium & 1 & 2727 & Low & E\&E & [1471, 1587, 1529, 1496, 1364] & 31.13 & 7.69 & 50.0\% & 58.2\% & 54.6\% & \textbf{10.30\%} \\
 &  &  &  & TUS & [1407, 1402, 1373, 1273, 1366] & 28.35 & 2.38 & 47.0\% & 52.0\% & 50.0\% & 1.01\% \\
 \midrule
 &  &  &  & RS & [2534, 2509, 2393, 2584, 2521] & 16.84 & 0.43 & 87.8\% & 94.8\% & 92.0\% & - \\
Medium & 2 & 2727 & High & E\&E & [2616, 2657, 2643, 2646, 2588] & 28.80 & 7.29 & 94.9\% & 97.4\% & 96.4\% & \textbf{4.78\%} \\
 &  &  &  & TUS & [2543, 2447, 2438, 2520, 2537] & 27.80 & 2.14 & 89.0\% & 93.0\% & 91.4\% & -0.65\% \\
  \midrule
 &  &  &  & RS & [1335, 1502, 1316, 1529, 1433] & 16.72 & 0.50 & 48.3\% & 56.1\% & 52.2\% & - \\
Medium & 2 & 2727 & Low & E\&E & [1365, 1668, 1615, 1676, 1584] & 31.43 & 9.21 & 50.1\% & 61.5\% & 58.0\% & \textbf{11.11\%} \\
 &  &  &  & TUS & [1622, 1517, 1468, 1412, 1582] & 28.45 & 2.58 & 52.0\% & 59.0\% & 55.8\% & 6.90\% \\
 \midrule
\end{tabular}

\label{table3:sampling_strategy_summary}
\end{sidewaystable}

\subsubsection{Efficiency in early improvement}

In several optimization scenarios, the efficiency of an algorithm is not solely determined by the final solution but also by how quickly it approaches competitive results. In this context, we examine the performance of the E\&E and TUS strategy in relation to the RS benchmark. Specifically, we focus on their ability to surpass the best solution provided by the benchmark given a limited computational budget. While it does not necessarily indicate convergence to the true optimal solution, it highlights the strategies' capacities to achieve superior solutions with reduced computational effort. This analysis provides valuable insights into the early performance of the algorithms and their practical applicability in settings with low sampling budgets.

Figure \ref{fig10:early-improvements-efficiency} displays the results for early improvements in efficiency. These results have been plotted over all four environments and for both high and low resource budget scenarios as well, benchmarked against the RS strategy. The TUS strategy has been plotted only for low budget and type 2 scenarios, because those are the only scenarios under which overall coverage with TUS is better than under the RS strategy, so coverage efficiency can be described under those conditions only. Further, it should be noted that the RS benchmark is not iterative and utilizes all of its sampling budget at once, so its results appear as horizontal (orange) lines.

\begin{figure}[H]
    \centering \includegraphics[width=0.9\textwidth]{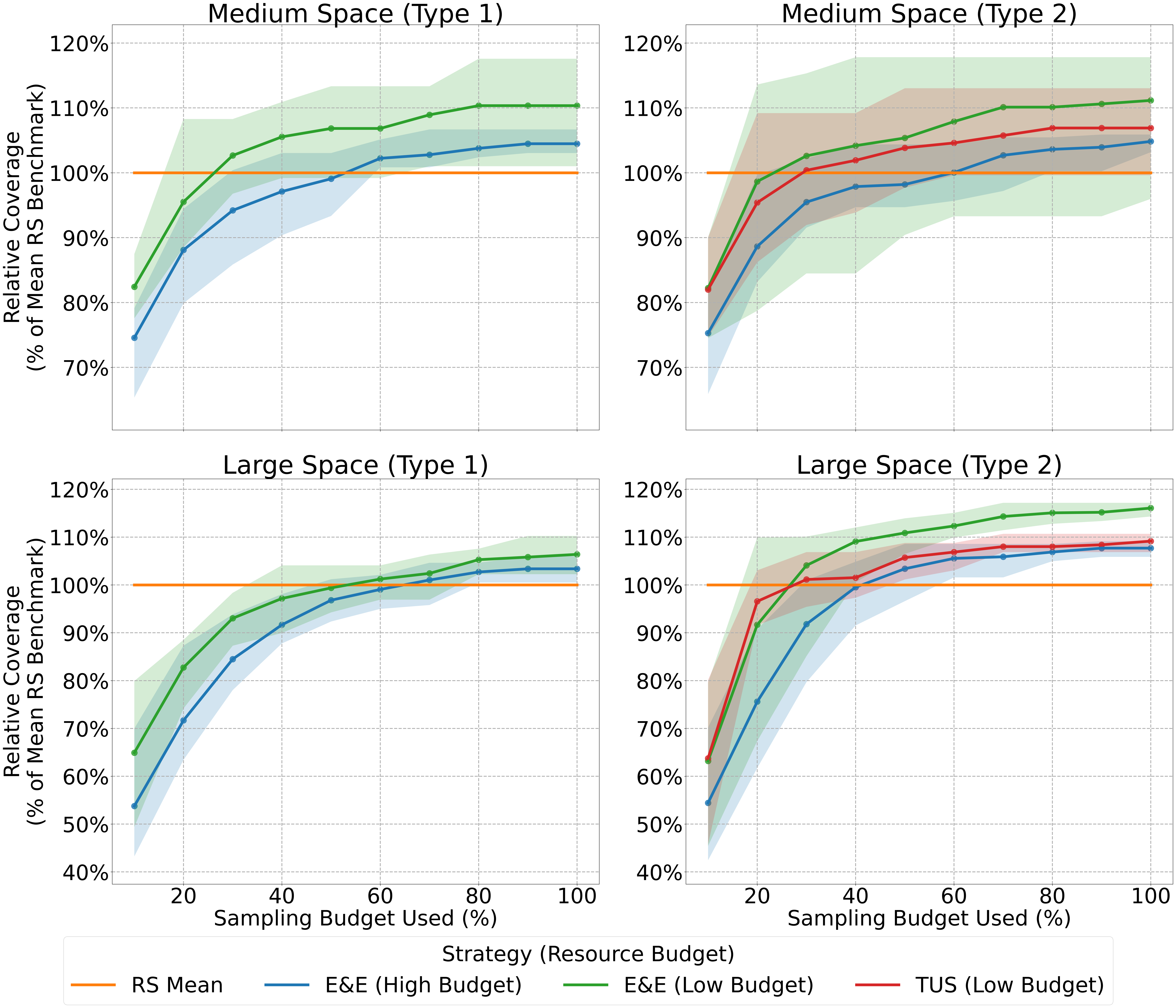}  \caption{This is a $2 \times 2$ panel of the four different environments, with the graphs displaying improvements over the RS benchmark. The mean RS benchmark has been normalized to 100\% for low and high resource budgets. Line plots of the strategies and the conditions under which they perform better than the RS benchmark have been plotted here. It should be noted that the RS benchmark is not iterative and utilizes all of its sampling budget at once. The x-axis represent the percentage of the sampling budget that has been exhausted, whereas the y-axis provides the improvement over the mean RS benchmark achieved. It should be noted that the strategies with the high resource budget are being benchmarked against the RS strategy with the high resource budget, and similarly for strategies with the low resource budget. Each scenario uses five seeds, with shaded areas indicating the ranges of results over the five trials.
    }
    \label{fig10:early-improvements-efficiency} 
\end{figure}

Two key findings emerge from the results depicted in Figure \ref{fig10:early-improvements-efficiency}. 1) The final mean coverage of the E\&E and TUS strategies is significantly better than that of the RS benchmark. 2) Under all environments and resource budgets displayed, our adaptive strategies overtake the mean RS benchmark within 30-70\% of the sampling budget utilized by the RS benchmark. In other words, by using the E\&E or TUS adaptive sampling approach, it is possible to achieve the same coverage as the RS benchmark using only about half the total number of camera configurations sampled.

In the medium space (type 1), even the minimum coverage achieved by the E\&E strategy in both scenarios including high and low resource budgets beats the RS benchmark within 50\% of the original sampling budget expended. For the medium space (type 2), there is one particular trial under a low resource budget that increases the size of the shaded area for the E\&E strategy. Otherwise, the mean coverage of the E\&E strategy is much higher. This outlier trial can be observed in Table \ref{table3:sampling_strategy_summary}.

For the large space (type 1), the E\&E strategy overtakes the mean RS benchmark after expending 70\% of the sampling budget utilized by the RS benchmark. However, the coverage improvement is lower than the one observed for the large space (type 2). In the large space (type 2), the E\&E strategy performs much better than the RS benchmark, with a 16\% improvement under low resource budget conditions overtaking the benchmark strategy in 30\% of the sampling budget, and 8\% under high resource budget conditions overtaking the benchmark strategy in 50\% of the sampling budget. The TUS strategy also provides a 9\% improvement over the RS benchmark under low budget conditions, overtaking it in 40\% of the sampling budget.

\subsection{Performance of the greedy heuristic}

The greedy heuristic method functionally performs as well as the IP model. Both methods were tested on 8 different problems using 5 different seeds, i.e. a total of 40 instances. This includes running the algorithm for two different resource budgets for 4 different indoor spaces. 

The greedy heuristic found a marginally worse solution than the IP in 14 of the 40 instances ($\sim1\%$). This is conjectured to be attributed to the problem structure, which aims to cover as many voxels as possible only once. Since the free space points are not redundantly covered by multiple camera configurations, there is minimal risk of overlapping configurations conflicting, reducing the chances of the greedy heuristic yielding suboptimal results. Our conjecture is backed by the fact that most of the solutions with differences arise in cases where the resource budget is high. When the resource budget is high, the free space points are more likely to be covered by redundant configurations and thus the greedy heuristic likely performs a little worse.

There are two major steps in the algorithm that impact the time spent optimizing the camera placement: pre-processing visibility calculations and solving the instance (solution time). Typically, the visibility calculations consume the most time and are comparable in all algorithms as it is dependent on the number of sample camera configurations, which we control for. Table \ref{table4:greedy-vs-ip} provides a comparison between the two models. For large environments, the mean pre-processing time is 27.47 seconds for the greedy heuristic and 26.98 seconds for the IP, while the mean model run time is 0.60 seconds for the heuristic and 0.80 seconds for the IP. In smaller environments, the mean pre-processing times are nearly identical, with 16.77 seconds for the heuristic and 16.79 seconds for the IP, while the mean model run times are 0.57 seconds for the heuristic and 0.46 seconds for the IP.

Given these comparable runtimes, we continue utilizing the IP solution whenever feasible, acknowledging that the greedy heuristic provides a valuable alternative when the model becomes too large for the optimization library to solve due to the presence of a high number of integer variables. For significantly larger problems where the number of integer variables exceeds the solver's capacity -- since it is an NP-hard problem -- we can effectively use the greedy heuristic as a reliable proxy for the IP solution.

Alternatively, we can enhance our algorithmic framework by warm-starting the solution with the heuristic and then refining it with IP to obtain a provably optimal solution.

\begin{sidewaystable}
    \centering
    \tiny
\caption{Comparison of different optimization models with different resource budgets and space types. The first 4 columns describe the 3D environment and the resource budget on which the algorithms have been tested. The fifth column lists the optimization strategy, Greedy or IP. The sixth column represents voxel coverage with different sampled sets. The rest of the columns provide mean summary statistics for the five different sample runs considered for each model, 3D environment, and resource budget.}
    
    \begin{tabular}{@{}p{1.2cm}p{1cm}p{1.3cm}p{1.5cm}p{1.2cm}p{1.9cm}p{1.3cm}p{1.1cm}p{1.12cm}p{1.1cm}p{1.2cm}p{1.55cm}@{}}

    \toprule

\textbf{Space Size} & \textbf{Space Type} & \textbf{Free Space Voxels} & \textbf{Resource Budget Type} & \textbf{Optimization Model Type} & \textbf{Voxels Covered (All Trials)} & \textbf{Mean Pre-processing time (s)} & \textbf{Mean Runtime (s)} & \textbf{Min Coverage} & \textbf{Max Coverage} & \textbf{Mean Coverage} & \textbf{Mean Improvement over Benchmark} \\

    \midrule

        Large & 1 & 5391 & High & Greedy & [4631, 4774, 4731, 4583, 4692] & 27.48 & 0.61 & 85.0\% & 88.6\% & 86.9\% & - \\
        & & &  & IP & [4638, 4814, 4770, 4583, 4692] & 27.18 & 0.67 & 85.0\% & 89.3\% & 87.2\% & 0.37\% \\
    \midrule    
        Large & 1 & 5391 & Low &  Greedy & [2498, 2736, 2681, 2532, 2705] & 27.88 & 0.57 & 46.3\% & 50.8\% & 48.8\% & - \\
        & & &  & IP & [2498, 2736, 2681, 2532, 2705] & 27.14 & 0.87 & 46.3\% & 50.8\% & 48.8\% & 0.00\% \\
        \midrule
        Large & 2 & 5391 & High & Greedy & [4695, 4873, 4840, 4609, 4777] & 26.92 & 0.62 & 85.5\% & 90.4\% & 88.3\% & - \\
        & & &  & IP & [4695, 4898, 4840, 4666, 4777] & 27.19 & 0.73 & 86.6\% & 90.9\% & 88.6\% & 0.34\% \\
    \midrule    
        Large & 2 & 5391 & Low & Greedy & [2597, 2912, 2976, 2746, 2877] & 27.61 & 0.61 & 48.2\% & 55.2\% & 52.3\% & - \\
        & & &  & IP & [2597, 2912, 2976, 2758, 2877] & 26.42 & 0.93 & 48.2\% & 55.2\% & 52.4\% & 0.08\% \\
        \midrule
        Medium & 1 & 2727 & High & Greedy & [2525, 2382, 2409, 2481, 2416] & 16.54 & 0.57 & 87.3\% & 92.6\% & 89.6\% & - \\
        & & &  & IP & [2530, 2419, 2412, 2546, 2466] & 16.68 & 0.44 & 88.4\% & 93.4\% & 90.7\% & 1.29\% \\
    \midrule    
        Medium & 1 & 2727 & Low & Greedy & [1331, 1378, 1358, 1401, 1282] & 16.5 & 0.55 & 47.0\% & 51.4\% & 49.5\% & - \\
        & & &  & IP & [1331, 1378, 1358, 1401, 1282] & 16.91 & 0.46 & 47.0\% & 51.4\% & 49.5\% & 0.00\% \\
        \midrule
        Medium & 2 & 2727 & High & Greedy & [2524, 2460, 2354, 2584, 2521] & 16.72 & 0.58 & 86.3\% & 94.8\% & 91.3\% & - \\
        & & &  & IP & [2534, 2509, 2393, 2584, 2521] & 16.84 & 0.43 & 87.8\% & 94.8\% & 92.0\% & 0.78\% \\
    \midrule    
        Medium & 2 & 2727 & Low & Greedy & [1335, 1502, 1316, 1529, 1433] & 17.33 & 0.57 & 48.3\% & 56.1\% & 52.2\% & - \\
        & & &  & IP & [1335, 1502, 1316, 1529, 1433] & 16.72 & 0.50 & 48.3\% & 56.1\% & 52.2\% & 0.00\% \\
        \midrule
    \end{tabular}

    \label{table4:greedy-vs-ip}
\end{sidewaystable}

\section{Conclusions} \label{section:8-conclusions}

In this study, we created a consistent, flexible, and efficient framework that works verifiably well to solve the optimal camera placement problem.

We developed and employed a novel flood-filling algorithm to efficiently determine which free-space voxels are visible within the viewing frustum of a given camera configuration. Only voxels near visible ones are added to the visibility stack, enabling us to efficiently handle visibility, which is primarily a connected set of voxels. This approach significantly reduces computations by omitting voxels outside the frustum or occluded by obstacles, bypassing costly obstruction checks for non-visible regions. The algorithm thus ensures computational efficiency while maintaining accuracy in visibility determination. We introduced two novel adaptive sampling strategies for optimizing camera placement: the Explore and Exploit (E\&E) strategy and the Target Uncovered Spaces (TUS) strategy. These strategies were designed to maximize the coverage of free space in complex indoor environments, utilizing environmental information in an iterative approach, while adhering to constraints such as limited camera budgets, partial visibility due to obstructions, and space restrictions. By combining random sampling with informed sampling based on environmental information and the previous iteration's solution, we aimed to balance the need for broad exploration of possible solutions with focused improvements. Both of these strategies are highly flexible with a large set of hyperparameters. We tested the models on custom and sanitized environments to find the best hyperparameters, which were then used to solve the case study problem and test algorithm performance against benchmarks.

The E\&E approach demonstrated robust performance across a range of scenarios, consistently outperforming the random baseline in terms of coverage while maintaining a manageable computational time. Its ability to leverage both exploratory and exploitative samples allowed it to adapt effectively to diverse room layouts and resource budgets, leading to improved coverage results. It provides an overall improvement of anywhere from 3.3\% to 16.0\% over the RS benchmark, based on the specific scenario and the resource budget. In contrast, the TUS strategy showed strengths in environments with open spaces and limited resource budgets, making it suitable for scenarios where maximizing the visibility of sparse areas is critical with coverage improvements of 6.9\% to 9.1\% under those specific conditions. Furthermore, we found that the strategies can be used to outperform the RS benchmark with a limited sampling budget, exhibiting their efficiency. Specifically, with the E\&E and TUS adaptive sampling strategies, similar coverage to that achieved by the random benchmark can be realized using only 30-70\% of the total sampling budget.

Under our framework, we also tested the modified maximum $k$-coverage model that we used for optimization against a greedy heuristic that is solvable in pseudopolynomial time. We found that the heuristic performs very well and could be utilized as a reliable proxy for the IP model. It allows us to augment the framework, such that we can utilize the greedy heuristic to warm-start the IP model or as a proxy for the IP solution, in cases where solving the IP model is computationally impractical.

We conducted a theoretical analysis to get insights into the algorithmic framework, offering bounds on the likelihood of finding an optimal solution over successive iterations and the expected number of samples required to find a solution, based on budget constraints. Furthermore, our optimization framework ensures monotonicity and constantly improving coverage with successive iterations.

We analyzed a case study based on a real-world indoor surveillance application to obtain insights into optimal camera placement and compare the model's solution with conventional camera placement wisdom. Overall, we found that placing cameras at the corners or along edges is not necessary and can lead to worse results, as compared to keeping them along the faces of the walls, and cameras typically do cover more space when placed near the ceiling than the floor. Further, we found that under low resource budgets, the means to good coverage is through central location of cameras along corridors and doorways, ignoring areas in the corner as they lead to worse overall coverage. Under high resource budgets, we find that a good way to improve coverage is for the cameras in the solution network to target large open spaces or rooms, while keeping their peripheral vision directed at corridors and doorways to maximize coverage from any camera. 

Our research points to several valuable directions for future work. 1) Termination criteria: it is still an open question on when to stop searching for a better solution. We work under the presumption of a fixed sampling budget and exhaust it to find the best solution, but gaining insights on termination criteria would be useful. 2) Our framework is applicable beyond indoor environments, and we would like to test it in diverse conditions to establish whether it continues to perform well in other contexts. 3) We conducted our analysis with all free space voxels having equal reward for coverage and each camera configuration having equal cost, but that is not necessarily the case in practical situations. Our framework is general enough that we can modify it for these different conditions, and we consider expanding into looking at how our solution strategies work under those situations.

We further aim to improve the selected camera network using an approximately continuous local optimization technique. There are some models that have implemented approaches along similar lines. In \cite{smith_aerial_2018}, once the approximate position for the camera network is calculated, the Nelder-Mead method is used to improve the configuration of the camera network further based on a 3D image reconstruction heuristical objective. In \cite{sun_learning_2021}, a neural network is utilized in a similar manner where the network is trained to calculate coverage based on the camera network configuration, and then gradient descent steps are taken backpropagating to the input to increase coverage. Under our proposed method, we intend to increase the granularity of the discrete space model and then use DFO-based methods to take steps from the discrete model's optimal solution to help improve the solution to reach near a true local optimum.

\section*{Declarations}

\subsection{Competing interests}

The authors declare that they have no competing interests, either financial or non-financial, related to the work submitted for publication.

\subsection{Funding}

This work was supported by the Office of the Vice President for Research, Scholarship and Creative Endeavors at The University of Texas at Austin.

\newpage

\bibliographystyle{sn-mathphys-ay-modified}
\bibliography{references}

\newpage

\begin{appendices}

\section{Mathematical program notation} \label{appA:ip-notation}

\subsection{Index}

\begin{tabular}{ll}
$p$ & Camera position index (3D vector) \\
$d$ & Direction index (3D vector) \\
$(p,d)$ & Camera configuration index \\
       & (comprises of two vectors: position vector \\
       & and direction vector. The tail of the direction \\ & vector is the camera position and the head is \\ 
       & the point the camera is focused on. \\
$v$ & Free space voxel index (comprises of one position vector)
\end{tabular}

\subsection{Sets}

\begin{tabular}{ll}
$V$ & Set of all free space voxels \\
$P$ & Set of all possible camera positions \\
$PD$ & Set of all possible camera configurations \\
     & (This is a theoretical construct as this is a continuous space) \\
$PD'$ & Set of sampled camera configurations \\
& (This is an actual input in the mathematical program) \\
$V_{pd}$ & Set of all free space voxels visible under camera configuration $(p,d)$ \\
$PD'_v$ & Subset of all camera configurations which can view free space voxel $v$ \\
$PD^{\text{adj}}_p$ & Set of all camera configurations which are in the immediate neighborhood \\
& of camera located at position $p$. As cameras have finite space, there can only \\
& be one camera per neighborhood.\\
\end{tabular}

\subsection{Parameters}

\begin{tabular}{ll}
$f_{pd}$ & Fixed cost of deploying camera at $(p,d)$ (equals 1 if resource budget refers to count of cameras) \\
$\beta$ & Maximum permissible cost ($\beta \in Z$ if it is an upper bound on the cardinality \\ & 
of the camera network) \\
\end{tabular}

\subsection{Decision variables}

\begin{tabular}{ll}
$x_{pd}$ & Binary variable, which is 1 iff the camera at configuration $(p,d)$ is part of the selected \\ 
& solution at that iteration \\
$y_v$ & Binary variable, 1 iff the selected camera network covers voxel $v$ \\
\end{tabular}

\section{Visibility function algorithm} \label{appB:vis-notation}

\subsection{Complete notation}

\subsubsection{Inputs}

\begin{tabular}{ll}
$F_m$ & Faces of the 3D environment mesh object \\
$r^V$ & Set of position vectors of free space voxels\\
$r^c_p$ & Eye of the camera, i.e., position vector of the camera position $p$ \\
$r^c_{w1}$ & Position vector of the horizontal field of view extremity \#1. \\
& Significant only when used in collaboration with $r^c_p$ \\
$r^c_{w2}$ & Position vector of the horizontal field of view extremity \#2. \\
$r^c_{h1}$ & Position vector of the vertical field of view extremity \#1. \\
$r^c_{h2}$ & Position vector of the vertical field of view extremity \#2. \\
$r^v$ & Position vector of voxel index $v$.
\end{tabular}

\subsubsection{Intermediates}

\begin{tabular}{ll}
$r^{V_{\text{visited}}}$ & Set of position vectors of voxels visited during the flood fill procedure \\
$r^{V_{\text{stack}}}$ & Set of position vectors of voxels in the stack which still need to be checked for being in FOV \\
$r^{\tilde{V}}$ & Set of position vectors of connected voxels which are within the FOV \\
$\hat{d}_{wn}$ & Vector normal to the horizontal field of view plane \\
$\hat{d}_{hn}$ & Vector normal to the vertical field of view plane \\
$\hat{d}^c_v$ & Direction vector from camera eye to voxel \\
$\hat{d}^c_{vw}$ & Component of direction vector from camera eye to voxel which is coplanar \\
& to the horizontal field of view plane \\
$\hat{d}^c_{vh}$ & Component of direction vector from camera eye to voxel which is coplanar \\
& to the vertical field of view plane \\
$b_w$ & Logical operator which is 1 if voxel in horizontal field of view \\
$b_h$ & Logical operator which is 1 if voxel in vertical field of view \\
$r^c_{Fv}$ & Position vector of the ray, with terminus $r^c_p$ and direction $\hat{d}^c_v$ \\
$b_s$ & Logical operator which is 1 if voxel can be viewed from position vector $r^c_p$ \\
$e_i$ & Unit vector with $i^{th}$ index set to 1.
\end{tabular}

\textbf{Note:} The visibility function Calculate-Camera-View outputs the set $V_{pd}$: the set of free space voxels visible from camera position $p$ and direction $d$. The position $p$ is an explicit argument expressed by $r^c_p$. The direction $d$ is a derived argument by various means. One of them is: $d = \frac{(r^c_{w1} + r^c_{w2})/2 - r^c_p}{||(r^c_{w1} + r^c_{w2})/2 - r^c_p||_2}$.

\subsubsection{Pseudocode}

\begin{algorithm}[H]
\caption{Visibility calculation pseudocode}
\label{alg:visibility-calc}
\begin{algorithmic}
\Procedure{Calculate-Camera-View}{$F_{m}, r^{V}, r^{c}_{e}, r^{c}_{w1}, r^{c}_{w2}, r^{c}_{h1}, r^{c}_{h2}$}
    \State $\hat{d}_{wn} \gets \frac{(r^{c}_{w1} - r^{c}_{e}) \times (r^{c}_{w2} - r^c_p)}{||(r^c_{w1} - r^c_p) \times (r^c_{w2} - r^c_p)||_{2}}$
    \State $\hat{d}_{hn} \gets \frac{(r^c_{h1} - r^c_p) \times (r^c_{h2} - r^c_p)}{||(r^c_{h1} - r^c_p) \times (r^c_{h2} - r^c_p)||_{2}}$
    \State $r^{V_{\text{visited}}} \gets \{r^c_p\}$
    \State $r^{\tilde{V}} \gets \emptyset$
    \State $r^{V_{\text{stack}}} = [ r^c_p + e_1, r^c_p - e_1, r^c_p + e_2, r^c_p - e_2, r^c_p + e_3, r^c_p - e_3 ]$
    \While{$r^{V_{\text{stack}}} \neq \emptyset$}
        \Comment{Iteratively checking whether voxels in FOV}
        \State $r^v \gets \text{pop}(r^{V_{\text{stack}}})$
        \If{$r^v \notin r^{V_{\text{visited}}} \text{ and } r^v \in r^V$}
            \State $r^{V_{\text{visited}}} \gets r^{V_{\text{visited}}} \cup \{r^v\}$
            \State $\hat{d}^c_v \gets r^v - r^c_p$
            \State $\hat{d}^c_{vw} \gets \hat{d}^c_v - \lt(\hat{d}^c_v \cdot \hat{d}_{wn}\rt) \hat{d}_{wn}$
            \State $\hat{d}^c_{vh} \gets \hat{d}^c_v - \lt(\hat{d}^c_v \cdot \hat{d}_{hn}\rt) \hat{d}_{hn}$
            \If{$\hat{d}^c_{vw}$ is in between non-reflex angle formed by $r^c_{w1}, r^c_p, r^c_{w2}$}
                \State $b_w \gets 1$
            \Else
                \State $b_w \gets 0$
            \EndIf
            \If{$\hat{d}^c_{vh}$ is in between non-reflex angle formed by $r^c_{h1}, r^c_p, r^c_{h2}$}
                \State $b_h \gets 1$
            \Else
                \State $b_h \gets 0$
            \EndIf
            \If{$b_w \cdot b_h = 1$}
                \State $r^{\tilde{V}} \gets r^{\tilde{V}} \cup \{r^v\}$
                \State $r^{V_{\text{stack}}} \gets r^{V_{\text{stack}}} \cup [ r^v + e_1, r^v - e_1, r^v + e_2, r^v - e_2, r^v + e_3, r^v - e_3 ]$
            \EndIf 
        \EndIf
    \EndWhile
    \State $V_{pd} \gets \emptyset$
    \For{$r^v \in r^{\tilde{V}}$}
        \Comment{For loop checking condition for being no obstructions}
        \State $r^c_{Fv} \gets$ Ray-Intersection($r^c_p, r^v, F_m$)
        \If{$\min(\|r^c_{Fv} - r^c_p\|_2, \text{DOF}) \geq \|\hat{d}^c_v\|$}
            \State $b_s \gets 1$
        \Else
            \State $b_s \gets 0$
        \EndIf
        \If{$b_s = 1$}
            \State $V_{pd} \gets V_{pd} \cup \{v\}$
            \Comment{$V_{pd}$ gets voxel index $v$ corresponding to position vector $r^v$}
        \EndIf
    \EndFor
    \State \Return $V_{pd}$
\EndProcedure
\end{algorithmic}
\end{algorithm}

\section{Sampling strategy E\&E notation} \label{appC:ee-notation}

\subsection{Inputs}

\begin{tabular}{ll}
$N_{\text{pos}}$ & Total number of positions to be sampled \\
$N_{\text{dir-pos}}$ & Total number of directions to be sampled per sample position \\
$f_{\text{exploit}}$ & Exploitation fraction \\
$PD_{\text{iter}}$ & Set of solution configurations \\
$P$ & Set of possible camera positions \\
$\theta_{\text{jitter}}$ & Angle perturbation allowed \\
$v_{\text{jitter}}$ & Voxel displacement tolerance \\

\end{tabular}

\subsection{Intermediates and output}

\begin{tabular}{ll}
$N_{\text{tot}}$  & Number of total configurations to be sampled \\
$\lceil \cdot \rfloor$ & Integer rounding function \\
$N_{\text{explore}}$ & Number of total configurations to be randomly sampled for the exploration phase \\
$N_{\text{pos-explore}}$ & Number of positions to be randomly sampled for the exploration phase \\
$PD_{\text{explore}}$ & Set of camera configurations in exploration set, sampled randomly \\
$N_{\text{sol}}$  & Number of configurations present in previous optimal solution \\
$N_{\text{exploit}}$ & Number of total configurations to be randomly sampled for the exploration phase \\
$N_{\text{config-sol}}$ & Total number of camera configurations that are sampled per \\ & configuration in the previous iteration's selected camera network  \\
$z_0$ & The $(0,0,1)'$ vector \\
$\mathcal{D}_z$ & Mapping of ordinals to random 3D direction vectors sampled that are \\
& a maximum angle of $\theta_{\text{jitter}}$ degrees away from the unit normal vector \\
& along z-axis. [i.e., the $z_0$ vector]\\
$(p,d)$ & Individual camera configuration composed of direction vector $d$ and voxel position $v$ \\
$d_z$ & Individual direction from $\mathcal{D}_z$ \\
$(p_{\text{new}}, d_{\text{new}})$ & The new configuration which will be part of the exploitation set \\
$v_{\text{perturb}}$ & Random voxel positions sampled that are a maximum Manhattan distance of \\ & $v_{\text{jitter}}$ away from $v$. \\
$PD_{\text{exploit}}$ & Set of camera configurations in exploitation set, sampled near about to the \\
& optimal solutions from the previous iteration  \\
$PD_{\text{E\&E}}$ & Final set of camera configurations updated in the new set \\
\end{tabular}

\section{Sampling strategy TUS notation} \label{appD:tus-notation}

\subsection{Inputs}

\begin{tabular}{ll}
$N_{\text{pos}}$ & Total number of positions to be sampled \\
$N_{\text{dir-pos}}$ & Total number of directions to be sampled per sample position \\
$f_{\text{unc}}$ & Uncovered search fraction \\
$P$ & Set of possible camera positions \\
$V^{\text{super}}_{\text{center}}$ & Set of positions of the supervoxels  \\
$\mathcal{G}_{\text{unc}}$ & Mapping from supervoxel index to the number of uncovered, smaller \\
& voxels within the supervoxel\\
\end{tabular}

\subsection{Intermediates and output}

\begin{tabular}{ll}
    $N_{\text{tot}}$  & Number of total configurations to be sampled \\
    $N_{\text{random}}$ & Number of total configurations to be sampled for the random search phase \\
    $N_{\text{pos-random}}$ & Number of positions to be sampled for the random search phase \\
    $PD_{\text{random}}$ & Set of camera configurations in random search set \\
    $N_{\text{targeted}}$ & Number of configurations considered in the uncovered sample set \\
    $C_{\text{normalize}}$ & Total number of uncovered voxels \\
    $v^{\text{super}}_{\text{center}}$ & Position of particular supervoxel \\
    $Pr(v)$ & Probability of supervoxel $v$ being chosen as the center on which the camera \\ 
    & will be focused on \\
    $PD_{\text{unc}}$ & Set of camera configurations focusing on targeting uncovered spaces \\
    $PD_{\text{TUS}}$ & Final set of camera configurations updated in the new set \\
\end{tabular}

\section{Theoretical analysis: Lemma 1} \label{appE:lemma1}

\textit{Given a single-precision floating-point representation, with \( \epsilon \) as the smallest distinguishable increment, and \( |P| \) as the cardinality of the position set, the number of possible camera configurations is on the order of \( \mathcal{O} \left( \frac{|P|}{\epsilon^2} \right) \).}

\begin{proof}
    Firstly, we calculate the number of unique directions possible within floating-point precision. Assume the set of direction vectors is \( D \), with its cardinality denoted as \( |D| \).

    There are a total of \( \left( \frac{1}{\epsilon} \right) \) floating-point values representable between \( 0 \) and \( 1 \) by definition. Thus, there are a total of \( \left( \frac{2}{\epsilon} \right) \) distinct values in the interval \( [-1, 1] \). 

    Within the volume in the interval \( [-1, 1]^3 \), there are a total of \( \left( \frac{2}{\epsilon} \right)^3 \) unique floating-point values possible in \( \mathbb{R}^3 \). This represents the cardinality of the set of all position vectors within \( [-1, 1]^3 \), which, equivalently, represents the set of all direction vectors within this interval when originating from the origin. These values are uniformly distributed across the volume.

    Now, we aim to calculate the cardinality of the subset of vectors with unit norm. This subset corresponds to the number of direction vectors that approximately lie on the surface of the unit sphere.
    \begin{align*}
        |D| &\approx \frac{\text{Volume of hollow unit      sphere}}{\text{Volume of }[-1, 1]^3} \cdot \lt( \text{Number of vectors from origin within } [-1, 1]^3 \rt) \\
            &= \frac{\text{Volume of hollow unit sphere}}{\text{Volume of }[-1, 1]^3} \cdot \lt(\frac{2}{\epsilon}\rt)^3 \\
            &= \frac{4 \pi / 3 \lt(1^3 - (1 - \epsilon)^3 \rt)}{8} \lt(\frac{8}{\epsilon^3}\rt) \\
            & \leq \frac{4 \pi}{\epsilon^2} \\
            & = \mathcal{O}\lt(\frac{1}{\epsilon^2} \rt)\\
    \end{align*}
    Now, the set of direction vectors $D$ is independent of the position vector being chosen. Therefore, the set of camera configurations can be represented as the Cartesian product of the position set $P$ and direction set $D$:
    \begin{align*}
        |P \times D| &= |P| \cdot |D| 
        = \mathcal{O}\lt(\frac{|P|}{\epsilon^2}\rt),
    \end{align*}
    which completes the proof.
\end{proof}

\renewcommand{\thefigure}{F\arabic{figure}}

\renewcommand{\thealgorithm}{F\arabic{algorithm}}
\renewcommand{\thetable}{F\arabic{table}}
\setcounter{algorithm}{0}

\renewcommand{\theproposition}{S\arabic{proposition}}

\renewcommand{\thelemma}{S\arabic{lemma}}

\section{Supplementary Materials} \label{app:supplementary}

\subsection{Greedy heuristic algorithm} \label{app:greedy_heuristic}

\begin{algorithm}[H]
\caption{Selection procedure for $PD^*$}
\begin{algorithmic}
    \Procedure{Greedy-Heuristic}{$V, PD'_v, \beta, PD'$}
        \State $V_{pd} \gets \emptyset$
        \For{$v \in V$}
        \Comment{$1^{st}$ for loop}
            \State $V_{pd} \gets V_{pd} \cup \{v\} \quad \forall (p, d) \in PD'_v$
            \Comment{nested sub-loop}
        \EndFor
        
        \State $F \gets 0$
        \State $PD^* \gets \emptyset$
        
        \While{$F \leq \beta$}
            \Comment{$2^{nd}$ for loop}
            \State $(\tilde{p}, \tilde{d}) \gets \arg \max_{(p,d) \in PD'} \frac{|V_{pd}|}{f_{pd}}$
            \Comment{(2a)}
            \State $PD' \gets PD' \setminus PD^{\text{adj}}_{\tilde{p}}$
            \Comment{(2b)}
            \For{$(p, d) \in PD'$}
                \Comment{(2c)}
                \State $V_{pd} \gets V_{pd} \setminus V_{\tilde{p} \tilde{d}}$
            \EndFor
            
            \State $PD^* \gets PD^* \cup \{(\tilde{p}, \tilde{d})\}$
            \State $F \gets F + f_{\tilde{p}\tilde{d}}$
        \EndWhile
        
        \State \Return $PD^*$
    \EndProcedure
\end{algorithmic}
\end{algorithm}

\setcounter{theorem}{0}
\begin{suppproposition}
    The greedy heuristic algorithm has a complexity of $\mathcal{O}(\beta \cdot |PD'| \cdot |V|)$.
\end{suppproposition}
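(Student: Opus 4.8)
The plan is to bound the running time of each of the two principal loops in the pseudocode separately and then take the dominant term. The algorithm consists of (i) an initialization for-loop that builds the forward map $V_{pd}$ from the inverse map $PD'_v$, and (ii) the greedy selection while-loop. First I would analyze the initialization loop: for each voxel $v \in V$, we insert $v$ into $V_{pd}$ for every configuration $(p,d) \in PD'_v$. Since $|PD'_v| \le |PD'|$ for every $v$, the total number of insertions over all voxels is at most $|V| \cdot |PD'|$, giving a cost of $\mathcal{O}(|V| \cdot |PD'|)$ for this stage.

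Next I would bound the while-loop, which is where the dominant cost lives. The key observation is that the loop executes $\mathcal{O}(\beta)$ times: each selected configuration contributes cost $f_{\tilde p \tilde d} \ge 1$ to the accumulator $F$, and the loop halts once $F$ exceeds $\beta$, so at most $\beta + 1$ iterations can occur. Within a single iteration, step (2a) computes the $\arg\max$ of $|V_{pd}|/f_{pd}$ over $(p,d) \in PD'$, costing $\mathcal{O}(|PD'|)$ provided the cardinalities $|V_{pd}|$ are maintained; step (2b) deletes the neighborhood $PD^{\text{adj}}_{\tilde p}$ from $PD'$ in $\mathcal{O}(|PD'|)$; and step (2c) updates every remaining configuration's visible set via the difference $V_{pd} \setminus V_{\tilde p \tilde d}$. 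Representing each visible set by a boolean membership array indexed by voxel, each such set difference costs $\mathcal{O}(|V|)$, so step (2c) costs $\mathcal{O}(|PD'| \cdot |V|)$ and dominates the per-iteration work. Multiplying by the $\mathcal{O}(\beta)$ iterations yields $\mathcal{O}(\beta \cdot |PD'| \cdot |V|)$ for the while-loop.

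Finally, I would combine the two bounds: since $\beta \ge 1$, the initialization cost $\mathcal{O}(|V| \cdot |PD'|)$ is absorbed into the while-loop cost, leaving an overall complexity of $\mathcal{O}(\beta \cdot |PD'| \cdot |V|)$, as claimed. The main obstacle is not any single calculation but justifying the loop and per-iteration counts cleanly. In particular, the claim that the while-loop runs only $\mathcal{O}(\beta)$ times relies on the assumption that every configuration cost satisfies $f_{\tilde p \tilde d} \ge 1$ (equivalently, integer costs); this is precisely what makes the bound \emph{pseudopolynomial}, since it scales with the numeric value of $\beta$ rather than its encoding length. A secondary point requiring care is the choice of data structures in steps (2a)--(2c): a naive representation of the visible sets would inflate either the $\arg\max$ or the set-difference step and break the stated bound, so the analysis must fix the boolean-array representation under which each set operation is linear in $|V|$.
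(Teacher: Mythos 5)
Your proposal is correct and follows essentially the same route as the paper: bound the initialization loop by $\mathcal{O}(|V|\cdot|PD'|)$ via $|PD'_v|\le|PD'|$, bound the while-loop by $\mathcal{O}(\beta)$ iterations (using the unit minimum-cost assumption) times a per-iteration cost dominated by step (2c) at $\mathcal{O}(|PD'|\cdot|V|)$, and absorb the former into the latter. Your added remarks on the boolean-array representation of the visible sets and on why the bound is pseudopolynomial are sound refinements that the paper leaves implicit, but they do not change the argument.
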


\begin{proof}
    The $1^{\text{st}}$ \texttt{for} loop along with the nested sub-loop has a time complexity of $\mathcal{O}(|V| \cdot |PD'|)$, as $|PD'| \geq |PD'_v|$.

    We can assume, without loss of generality, that the minimum cost of a camera configuration is 1. This provides an outer loop complexity of $\mathcal{O}(\beta)$ for the 2nd \texttt{for} loop. Loop (2b) and (2a) have a complexity of $\mathcal{O}(|PD'|)$ each, whereas loop $(2c)$ has a nested sub-loop requiring iterations over voxels in addition to the configurations, giving it an inner loop complexity of $\mathcal{O}(|PD'|\cdot|V|)$.

    Thus, the overall complexity is $\mathcal{O}(\beta\cdot|PD'|\cdot|V|) + \mathcal{O}(|PD'| \cdot |V|)$ = $\mathcal{O}(\beta\cdot|PD'|\cdot|V|)$.
\end{proof}

\subsection{Minor functions} \label{section:F-minor-functions}

This subsection defines the pseudocode for the three minor functions present in the adaptive sampling algorithms.

\subsubsection{Vector rotation (E\&E strategy)}

\begin{algorithm}[H]
\caption{Vector rotation pseudocode}
\begin{algorithmic}[1]
\Procedure{Rotate-Along}{$\mathbf{v}_\text{from}, \mathbf{v}_\text{to}, \mathbf{v}$}
    \State Normalize $\mathbf{v}_\text{from} \gets \frac{\mathbf{v}_\text{from}}{\|\mathbf{v}_\text{from}\|}$
    \State Normalize $\mathbf{v}_\text{to} \gets \frac{\mathbf{v}_\text{to}}{\|\mathbf{v}_\text{to}\|}$
    
    \State Compute rotation axis $\mathbf{n} \gets \mathbf{v}_\text{from} \times \mathbf{v}_\text{to}$
    \State Compute rotation angle $\theta \gets \arccos\left(\mathbf{v}_\text{from} \cdot \mathbf{v}_\text{to}\right)$
    
    \State Set $c \gets \cos(\theta)$, $s \gets \sin(\theta)$, and $t \gets 1 - c$
    
    \State Let $\mathbf{n} = (n_x, n_y, n_z)$ be the components of $\mathbf{n}$
    
    \State Construct rotation matrix:
    \[
    R \gets \begin{pmatrix}
        t n_x^2 + c & t n_x n_y - s n_z & t n_x n_z + s n_y \\
        t n_x n_y + s n_z & t n_y^2 + c & t n_y n_z - s n_x \\
        t n_x n_z - s n_y & t n_y n_z + s n_x & t n_z^2 + c
    \end{pmatrix}
    \]
    
    \State Apply the rotation: $\mathbf{v}_{\text{rot}} \gets R \cdot \mathbf{v}^T$
    
    \State \textbf{return} $\mathbf{v}_{\text{rot}}^T$
\EndProcedure
\end{algorithmic}
\end{algorithm}

\subsubsection{Spherical cap sampling (E\&E strategy)}

\textbf{Note:} $\mathcal{U}_{\text{cont}}\lt(a, b\rt)$ refers to the continuous uniform distribution parameterized by lower bound $a$ and upper bound $b$.

\begin{algorithm}[H]
\caption{Spherical cap sampling pseudocode}
\begin{algorithmic}[1]
\Procedure{Sample-Spherical-Cap}{$\alpha, N$}
    \State $z_{\min} \gets \cos(\alpha)$
    \State $d_z \gets \emptyset$

    \For{$i \in \{1, 2, \cdots, N\}$}
        \State $z \sim \mathcal{U}_{\text{cont}} \lt(0, 1 - z_{\min} \rt) + z_{\min}$
        \State $r \gets \sqrt{1 - z^2}$
        \State $\theta \sim \mathcal{U}_{\text{cont}}\lt(0, 2 \pi \rt)$
        \State $x \gets r \cos \lt(\theta \rt)$
        \State $y \gets r \sin \lt(\theta \rt)$
        \State $d_z[i] \gets (x, y, z)$
    \EndFor
    
    \State \textbf{return} $d_z$
\EndProcedure
\end{algorithmic}
\end{algorithm}

\subsubsection{Random configuration sampling} \label{app:random_config}

\textbf{Notation:} $MVN(\mathbf{0}, \mathbf{I}_3)$ refers to the probability density function of a Trivariate standard normal distribution.

\begin{algorithm}[H]
\caption{Random configuration sampling pseudocode}
\label{alg:random_sample}
\begin{algorithmic}[1]
\Procedure{Sample-Random-Configurations}{$N_{\text{pos}}, P, N_{\text{dir-pos}}$}
    \State $PD_{\text{random}} \gets \emptyset$
    
    \For{$n_{\text{pos}} \in \{1, 2, \cdots, N_{\text{pos}}\}$}
        \State $p \sim P$
        \For{$n_{\text{dir}} \in \{1, 2, \cdots, N_{\text{dir-pos}}\}$}
            \State $d \sim MVN(\mathbf{0}, \mathbf{I}_3)$
            \State $d \gets d/||d||_2$
            \State Resample and renormalize if $d \in \{ (0, 1, 0)^T, (0, -1, 0)^T \}$
            \State $PD_{\text{random}} \gets PD_{\text{random}} \cup \{(p, d)\}$
        \EndFor
    \EndFor    
    \State \textbf{return} $PD_{\text{random}}$
\EndProcedure
\end{algorithmic}
\end{algorithm}

\subsection{Scenario 2: low-resource budget performance of TUS strategy} \label{app:scenario-2}

We utilize the same environment as in the 
high-resource case study to conduct a test application of the TUS strategy. Based on the results that we will present in Section 7 of the manuscript, we find that the TUS strategy performs well in contexts with open environments and low budgets. Although there are several obstructions in the case study environment, there are some open spaces as well, such as the living room and dining room space. So, we use the TUS strategy to maximize coverage of the indoor space with a lower resource budget of three cameras.

We contrast the high-resource budget case study with this one and discuss new insights that we obtain on optimal camera placement. The lower resource budget of three cameras translates to roughly one camera per 2,000 free space voxels.

\subsubsection{Best configurations}

We present a visual description of the coverage achieved in Figure \ref{fig8:overall-coverage-low-resource-budget}. The uncovered voxels are clustered around the two bathrooms at the top, near the far edges of the bedrooms, and to some extent around the staircase. The top level is mostly uncovered, and the algorithm primarily focuses on getting access to corridors, as it allows coverage of multiple spaces. In this case, the dining and living room are well covered. All major corridors, except for the staircases, are well covered. However, within the rooms, the focus is on accessing the central spaces.

\begin{figure}[H]
    \centering
    \subfigure[]{\includegraphics[width=0.9\textwidth]{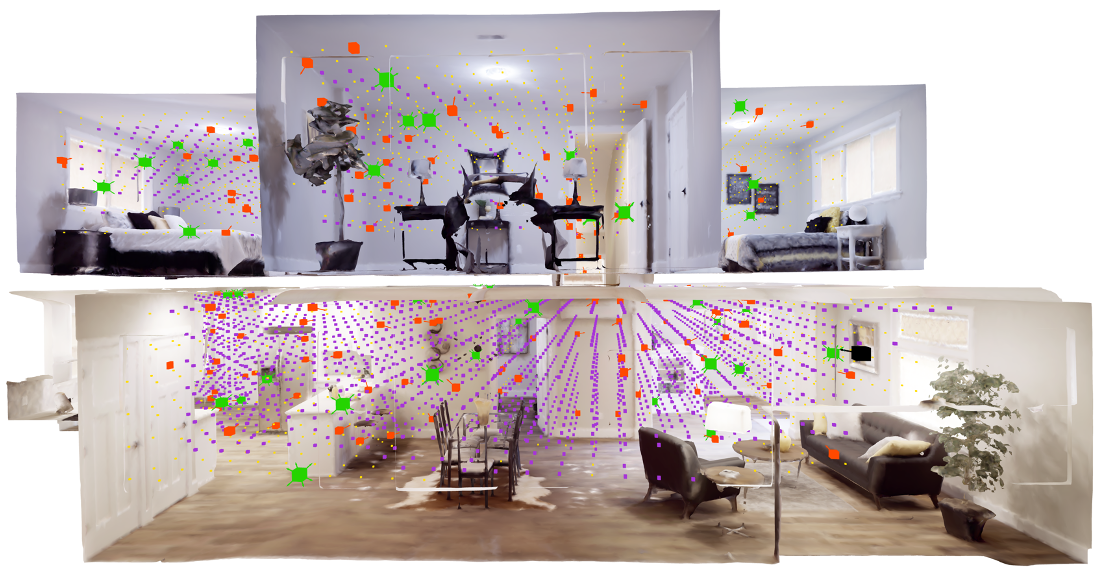} \label{fig8a}}
    
    \vspace{0.5em} 
    
    \subfigure[]{\includegraphics[width=0.9\textwidth]{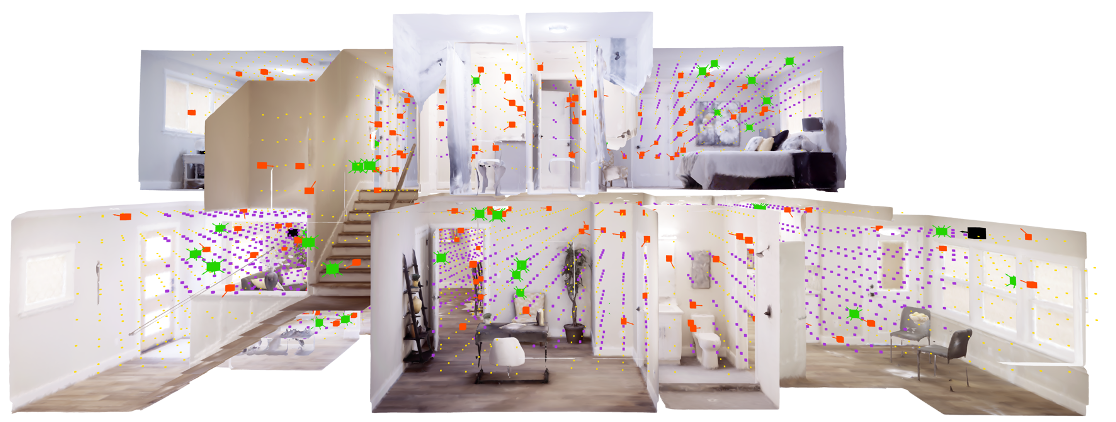} \label{fig8b}}
    
    \caption{This is a panel of two images, where the top image (Part (a)) gives a frontal view of the coverage achieved, and the bottom image (Part (b)) provides a back view (refer to Figure 3(b) for legend).}
    \label{fig8:overall-coverage-low-resource-budget}
\end{figure}

\begin{figure}[H]
    \centering
    \subfigure[]{\includegraphics[width=0.3\textwidth]{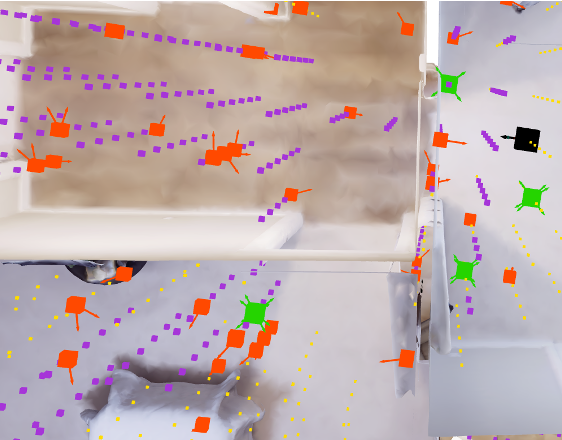} \label{fig9a}}
    \hspace{0.5em}
    \subfigure[]{\includegraphics[width=0.3\textwidth]{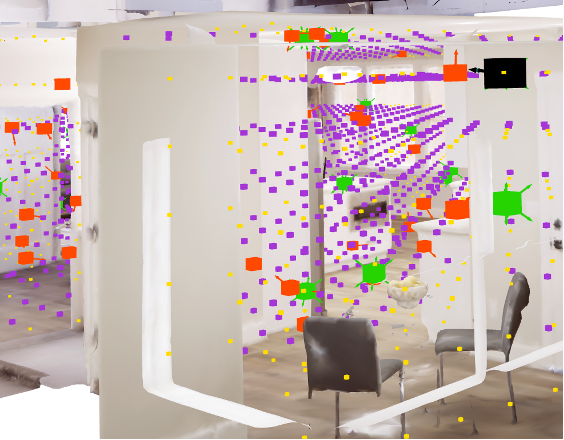} \label{fig9b}}
    \hspace{0.5em}
    \subfigure[]{\includegraphics[width=0.3\textwidth]{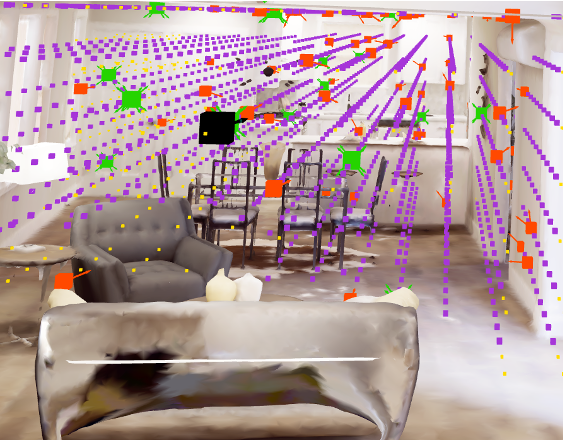} \label{fig9c}}
    
    \caption{This is a $3 \times 1$ panel of the three camera positions in the top position as chosen by the TUS strategy (refer to Figure 3(b) for legend).}
    \label{fig9:best-configurations-low-resource-budget}
\end{figure}

The optimal positions, given the sampling configuration set, and their coverage are described below. The camera configurations can be seen in Figure \ref{fig9:best-configurations-low-resource-budget}.

\textbf{Camera configuration a:}  This camera is placed in the living room at the top, angling downwards. It is placed in such a way that it is able to view large portions of living room 1 and a large part of the kitchen as well.

\textbf{Camera configuration b:} It is placed at the top in living room 2. It is located along the middle of the edge, angled in a way so that it has an unobstructed view to the corridors in the back and the front, which helps it capture parts of the study room at the back, and the kitchen and living room in the front.

\textbf{Camera configuration c:} This camera placement is not an obvious one from a conventional perspective. It is placed in bedroom 3 at the bottom, to get an unobstructed view of bedroom 1 in front along with some coverage of bedroom 2 as well. This leaves the staircases uncovered.

\subsubsection{Comparison with conventional wisdom for camera placement}

The main uncovered spaces in this case are the bathrooms, the staircases, and spaces along the edges of each room. The major spaces covered are the living room and kitchen, which occupy large spaces and allow substantial coverage even with the lower camera budget. 

This algorithm shows that under a smaller resource budget, it is important to prioritize covering large open spaces and to place the cameras that do that in positions where they can also cover corridors. In this case, two of the three cameras are placed along or near the top wall independently. Even though some of the exploratory camera configurations are sampled from the middle, the algorithm iteratively gravitates towards the top positions.

\subsubsection{Performance metrics}

The analysis was carried out using the best hyperparameters set for the TUS strategy, as estimated empirically in Appendix \ref{app:hyper_tuning}. We set the sampling budget at 800 configurations, spread over 10 iterations, with an uncovered search fraction of 0.4. This corresponds to 6 exploratory positions per iteration, and 8 angles sampled per voxel per iteration.

The performance metrics for this run are summarized as follows. The total processing time for visibility calculations is 343.61 seconds, and the total optimization time is 8.61 seconds. The total free space is 5,985 voxels, and the final coverage achieved is 3,151 voxels (53\% of the total free space). The summary statistics for coverage of each camera are given in Table \ref{table2:camera_coverage_su}.

\begin{table}[ht]
    \centering
    \caption{Camera configurations and corresponding voxel coverage statistics for the TUS algorithm. It provides the optimal solution for a resource budget of 3 cameras and details the network coverage provided, number of voxels covered, and overcoverage, i.e., voxels covered more than once.}
    \begin{tabular}{p{3.5cm}p{4cm}p{3cm}p{2.6cm}}
        \hline
        \raggedright \textbf{Camera Configuration} & \textbf{Camera Network Coverage Provided (\%)} & \raggedright \textbf{Number of Voxels Covered} & \raggedright \textbf{Voxels Covered More than Once} \\
        \hline
        Camera configuration a & 19.3\% & 609 & 0 \\
        
        Camera configuration b & 34.9\% & 1101 & 472 \\
        
        Camera configuration c & 60.7\% & 1913 & 472 \\
        \hline
    \end{tabular}

    \label{table2:camera_coverage_su}
\end{table}

In this case, the overcoverage fraction is high for camera configuration b. However, the number of unique voxels it observes is 629, which is still higher than the number of voxels that camera configuration a covers. With few cameras it is important to prioritize monitoring the largest open spaces, even if there is some redundancy in camera coverage.

\subsection{Hyperparameter tuning} \label{app:hyper_tuning}

In this section, we run the two adaptive sampling model strategies under sanitized, custom environments and develop useful insights about them. These insights allow us to evaluate which algorithm performs better and under what conditions. We analyze the efficacy of the algorithms and tune the hyperparameters of each algorithm to use it for our final case study, where we 
implement the model on much larger, real-life scenarios.

For this purpose, we construct a room generator, which based on our inputs, creates custom rooms with walls acting as obstructions.

\subsubsection{Room scenario generator} \label{app:room_gen} 

The room scenario generator develops a cuboid room and takes the following inputs to design a custom room:

\begin{tabular}{ll}
\texttt{length} & Room length (length of room along x-axis) \\
\texttt{height} & Room height (height of room along y-axis) \\
\texttt{breadth} & Room breadth (breadth of room along z-axis) \\
\texttt{num\_walls} & Number of walls that obstruct the room, installed at equidistant \\
                    & intervals, by default \\
\texttt{y\_wall\_edge\_ratio} & Ratio of height of wall and the height of the room \\
\texttt{z\_wall\_edge\_ratio} & Ratio of breadth of wall and the breadth of the room \\
\texttt{wall\_width} & Thickness of each wall \\
\texttt{random\_range} & The maximum amount of perturbation in the placement of each \\
                    & wall can take along the $X$-axis (from 0 to 1)  \\
\texttt{seed} & Pseudorandom number seed for replicating results\\
\texttt{wall\_orient} & Refers to the wall orientation and can take one of two options: \\
              & $\lt\{ \texttt{`alternate'}, \texttt{`same-side'} \rt\}$ \\
              & \texttt{`alternate'}: Wall partitions are at alternate sides of the room \\
              & \texttt{`same-side'}: Wall partitions are at the same side of the room \\
\end{tabular}
\newline

We solve the optimal camera placement problem for two different types of rooms: medium-sized rooms and large-sized rooms. We utilize two medium sized-rooms (each of dimension $40 \times 10 \times 10$, one with \texttt{`alternate'} wall partitions and one with \texttt{`same-side'} wall partitions) and two-large sized rooms (each of dimension $80 \times 10 \times 10$, similarly with \texttt{`alternate'} wall partitions and one with \texttt{`same-side'} wall partitions). We choose a resource budget of 4 cameras for the two medium-sized rooms and a resource budget of 8 cameras for the large-sized rooms.

Furthermore, as there is an inherent stochasticity in the choice of initial camera positions which dictates the model performance and solution time, we conduct experiments over two sets: 1) 5 runs with the same initial starting points, i.e., 5 sampling strategy runs, where the initial purely random phase iteration provides the same exact values, but after the first iteration, the next set of informed and random configurations provide different samples due to inherent stochasticity; and 2) 5 runs with the different initial starting points, i.e., 5 sampling strategy runs, where the initial random phase iteration itself generates different samples, and thus, the subsequent samples are bound to be different. This allows us to check robustness results to observe specific sets of hyperparameters and provide an upper and lower bound over the different runs.

For all runs, we consider a horizontal FOV of $90^\circ$, and an aspect ratio of 4:3, corresponding to a vertical FOV of $73^\circ$. As we are considering relatively smaller sized-indoor environments and not large-scale outdoor environments, we are not restricting the depth of field.

\subsubsection{Sensitivity analysis: E\&E strategy}

In this subsection, we evaluate the performance of the E\&E strategy for optimal camera placement. We systematically test this strategy across various configurations and perform a sensitivity analysis on the hyperparameters to determine the best settings for the model. This strategy balances the exploration of completely new camera configurations with the exploitation of the current solution and improving the solution locally.

\textbf{Exploitation fraction:} The exploitation fraction is one of the most important hyperparameters because it decides the critical balancing act that the model must perform in sampling camera configurations which are exploratory, i.e., randomly chosen from the pseudorandom number generator, or chosen approximately near the configurations from the incumbent camera network. A value of 1 for the exploitation fraction corresponds to all camera configurations being sampled near the incumbent camera network, whereas a value of 0 corresponds to all camera configurations being sampled randomly. To determine the optimal value for this hyperparameter, we conducted a series of experiments varying the voxel perturbation allowance from $\left\{0, 0.2, 0.4, 0.6, 0.8, 1\right\}$ and analyzed the algorithm's performance across different environments.

\begin{figure}[h]
    \centering
    \subfigure[]{\includegraphics[width=0.48\textwidth]{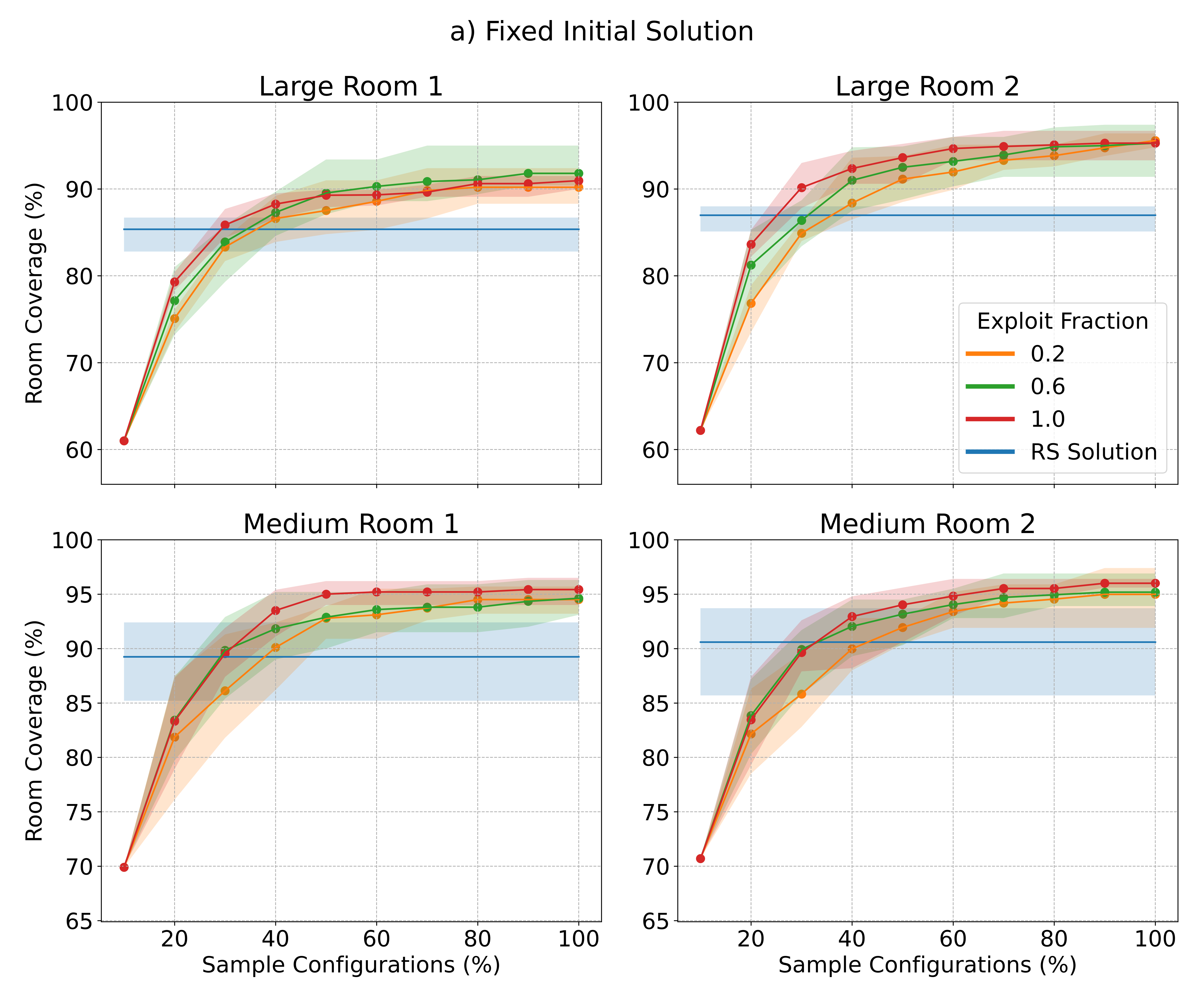} \label{figG1a}}
    \subfigure[]{\includegraphics[width=0.48\textwidth]{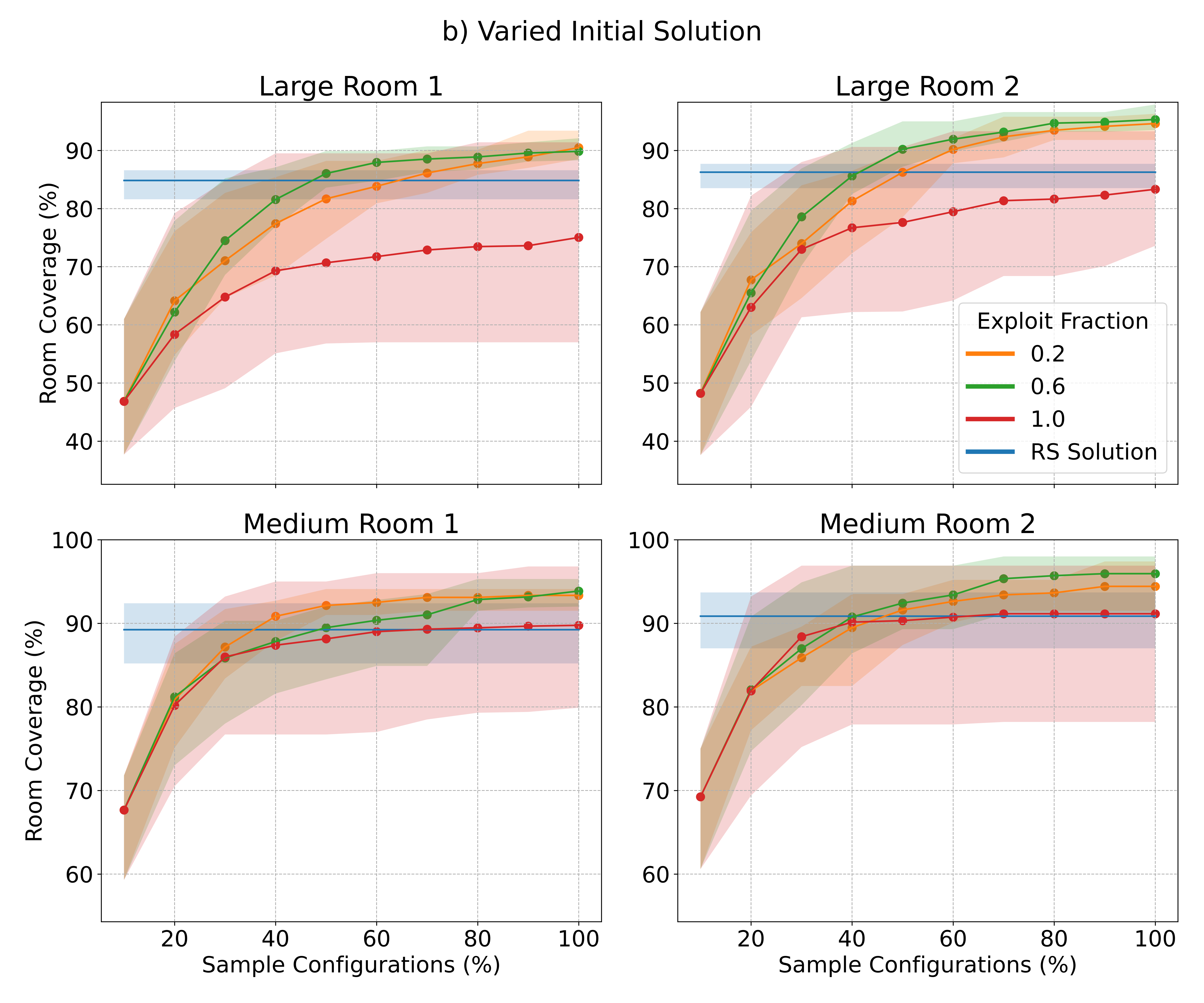} \label{figG1b}}
    \caption{Two $2 \times 2$ grids show the performance of an algorithm using the E\&E strategy with various exploit fractions (green, orange, red) versus the RS benchmark strategy (blue). Part (a) uses the same initial camera configurations, while part (b) starts with different initial solutions. The x-axis represent the percentage of the sampling budget that has been exhausted, whereas the y-axis provides the percentage of free space voxels covered. Each scenario uses five seeds, with shaded areas indicating the range of results. The E\&E strategy outperforms the RS benchmark, especially in larger rooms, where better solutions are found after sampling 30-40\% of configurations. Lower exploit fractions yield more robust results when starting from random initial solutions}
    \label{figG1:exploit-fraction-sensitivity}
\end{figure}

Figure \ref{figG1:exploit-fraction-sensitivity} displays two grids of $2 \times 2$ side-by-side, illustrating how, over iterations, the algorithm employing the E\&E strategy with various exploit fractions (represented by the colors green, orange, and red) surpasses the RS benchmark, depicted in blue. Due to inherent variability in each setting, we used five different seeds for each run, creating an envelope of the same color to represent the minimum and maximum values, highlighted around each solid line. Figure \yk{F}\ref{figG1a} represents a setting where the initial iteration provided the same optimal camera configurations as the starting point, while Figure \yk{F}\ref{figG1b} shows results obtained with different solutions in the initial iteration.

With a fixed starting point, it is evident that the E\&E strategy significantly outperforms RS benchmark. In Figure \yk{F}\ref{figG1a}, having any exploit fraction greater than 0 allows the algorithm to discover much better solutions on average. In larger rooms, as can be seen in \yk{F}\ref{figG1a}, the algorithm achieves a better solution than the RS benchmark mean after sampling 30-40\% of the configurations. In smaller rooms, while the RS benchmark is more competitive, the algorithm still finds better solutions after sampling 50-60\% of the configurations.

Furthermore, \yk{F}\ref{figG1a} suggests that the most effective exploit fraction to use is 1.0, indicating a pure exploitation strategy that samples only local solutions nearby. A close second is an exploit fraction of 0.6. However, when starting from a random initial solution, as shown in \yk{F}\ref{figG1b}, lower exploit fractions yield more robust and higher coverage, with an exploit fraction of 1.0 performing poorly due to high variance in the solutions. For this reason, we choose to use an exploit fraction of 0.6.

\textbf{Voxel perturbation allowance:} The voxel perturbation allowance is a critical hyperparameter in our E\&E strategy, directly impacting the flexibility and precision of camera placements in the exploit phase of the strategy. This parameter defines the range within which a camera's position can be adjusted, allowing for fine-tuning to improve visibility and coverage. To determine a value for this hyperparameter that generally performs well, we conducted a series of experiments varying the voxel perturbation allowance from $\left\{0, 1, 2, 3, 4\right\}$ and analyzed the algorithm's performance across different environments.

\begin{figure}[h!]
    \centering
    \includegraphics[width=0.8\textwidth]{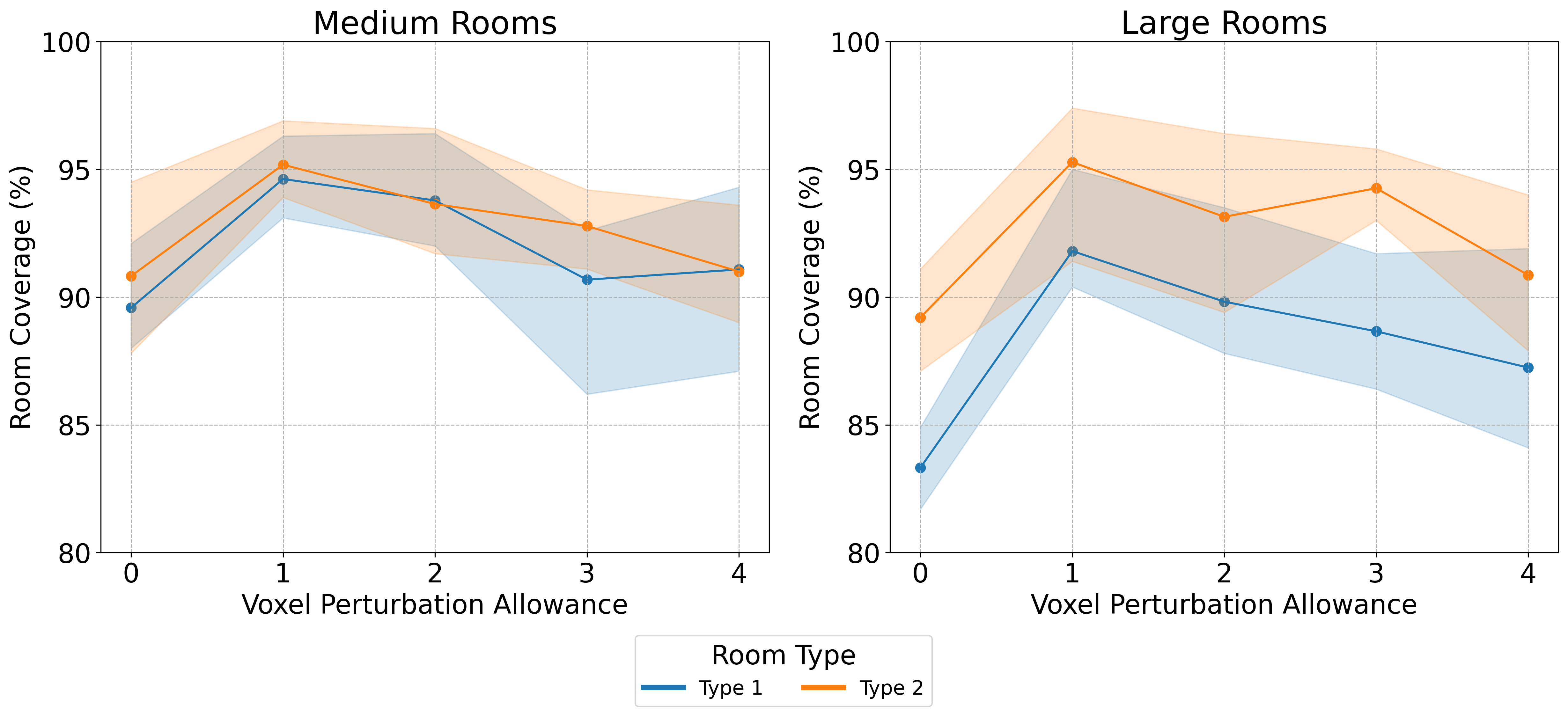}
    \caption{Sensitivity analysis of voxel perturbation allowance (VPA) on room coverage. The left graph shows mean coverage for medium-sized rooms, and the right for large-sized rooms. A VPA of 1 voxel yields the highest mean coverage and the lowest range across multiple seeds and runs, making it a good choice for the algorithm}
    \label{figG2:ee-vpa-sensitivity}
\end{figure}

Figure \ref{figG2:ee-vpa-sensitivity} shows the sensitivity analysis of the voxel perturbation allowance (VPA) on room coverage, using line plots where the x-axis represents the different VPA values tested, and the y-axis indicates the corresponding room coverage achieved. The left graph shows the mean coverage obtained by the E\&E strategy for medium-sized rooms across \texttt{alternate} (Type 1) and \texttt{same-side} (Type 2) room types, while the right graph presents the same for large-sized rooms.

The envelope around each plotted line of the same color represents the range of coverage outcomes generated by the algorithm when run with various camera configurations while controlling for the initial starting iteration. Firstly, the coverage trend demonstrates that both room types benefit from some degree of angle perturbation. This improvement is consistent across both room sizes, indicating that a moderate adjustment in camera orientation enhances the algorithm's ability to exploit camera configurations based on previous iterates' optimal configurations effectively. The analysis reveals that a VPA of 1 voxel yields the highest mean coverage for both medium and large rooms across the full algorithm run. Additionally, a VPA of 1 voxel also exhibits the lowest range over multiple seeds and runs, indicating more consistent performance. This VPA value, equivalent to approximately 10\% of the room's breadth, will be adopted as the hyperparameter for the final version of our algorithm.

\textbf{Angle perturbation allowance:} The angle perturbation allowance defines the range within which a camera's orientation can be adjusted, allowing for fine-tuning to improve visibility and coverage. A series of experiments were conducted, varying the voxel perturbation allowance from $\lt\{0^\circ, 15^\circ, 30^\circ, 45^\circ\rt\}$ and the algorithm's performance was analyzed across different environments.

The Figure \ref{figG3:ee-apa-sensitivity} shows the sensitivity analysis of the angle perturbation allowance (APA) on room coverage, using line plots where the x-axis represents the different APA values tested, and the y-axis indicates the corresponding room coverage achieved. The left graph shows the mean coverage obtained by the E\&E strategy for medium-sized rooms across two different room types, while the right graph presents the results for large-sized rooms. The envelope around each plotted line of the same color represents the range of coverage outcomes generated by the algorithm when run with various camera configurations while controlling for the initial starting iteration.

\begin{figure}[h!]
    \centering
    \includegraphics[width=0.8\textwidth]{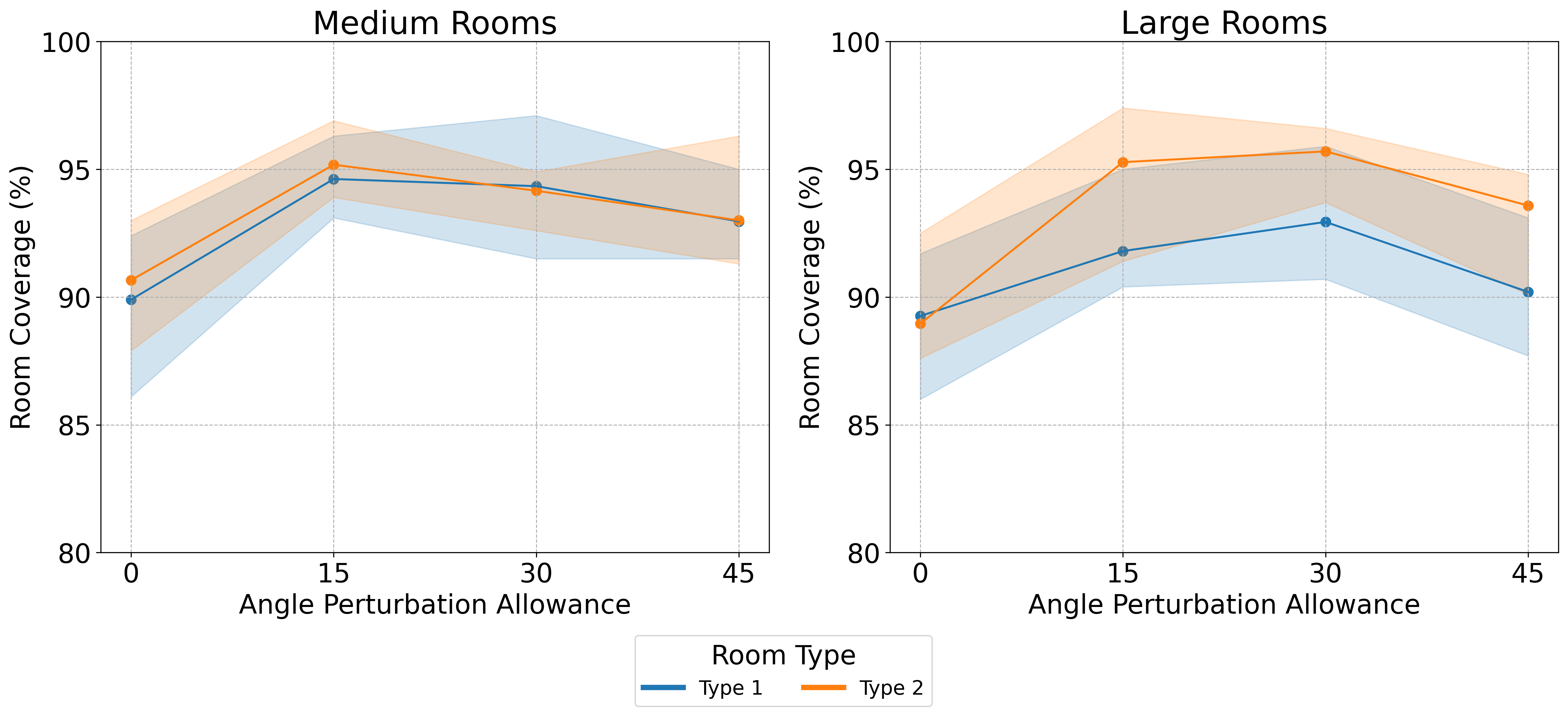}
    \caption{Sensitivity analysis of voxel perturbation allowance (VPA) on room coverage. The sensitivity analysis shows that an APA of $0^\circ$ performs poorly, while higher APAs, like $45^\circ$, result in random search behavior. Optimal values lie between $15^\circ$ and $30^\circ$. For large rooms, $30^\circ$ yields the highest mean coverage (94.32\%), and for medium rooms, $15^\circ$ is optimal (94.5\%). Based on these findings, $30^\circ$ is chosen as the preferred APA}
    \label{figG3:ee-apa-sensitivity}
\end{figure}

Firstly, it is clear that an APA of $0^\circ$ is not favorable, as it performs worse than all other angles. Also, at high APA of $45^\circ$, the model appears to lose focus on local solutions that could improve the overall outcome, resulting in a more random search behavior. So, a good hyperparameter choice is between $15^\circ$ and $30^\circ$. 

Specifically, the graph reveals that an APA of $30^\circ$ yields the highest mean coverage for the large room at 94.32\% mean coverage, compared to 93.54\% mean coverage with $15^\circ$ APA; while $15^\circ$ is optimal for the medium-sized rooms, achieving 94.5\% mean coverage compared to 94.25\% mean coverage with $30^\circ$ APA across the full algorithm run. The observed coverage ranges over different solutions are also comparable: an average range of 3.1\% at $15^\circ$ versus a range of 3.95\% at $30^\circ$ for the two medium-sized rooms; an average range of 5.3\% at $15^\circ$ versus a range of 4.05\% at $30^\circ$ for the two large-sized rooms. Given there are no significant differences, we choose a hyperparameter value of $30^\circ$, as it seems to be marginally better for the large-sized rooms.

\textbf{Iteration counts:} The iteration count is a critical hyperparameter in the E\&E strategy, determining the number of times the algorithm refines its camera placement configurations. In this sensitivity analysis, we explore the impact of varying iteration counts while maintaining a constant total number of sampled configurations. Specifically, we examine three different iteration counts: 5, 10, and 20, with each configuration maintaining a total of 8,000 sampled camera positions. Consequently, the number of samples per iteration is adjusted accordingly to 1600, 800, and 400, respectively.

\begin{figure}[h!]
    \centering
    \includegraphics[width=0.9\textwidth]{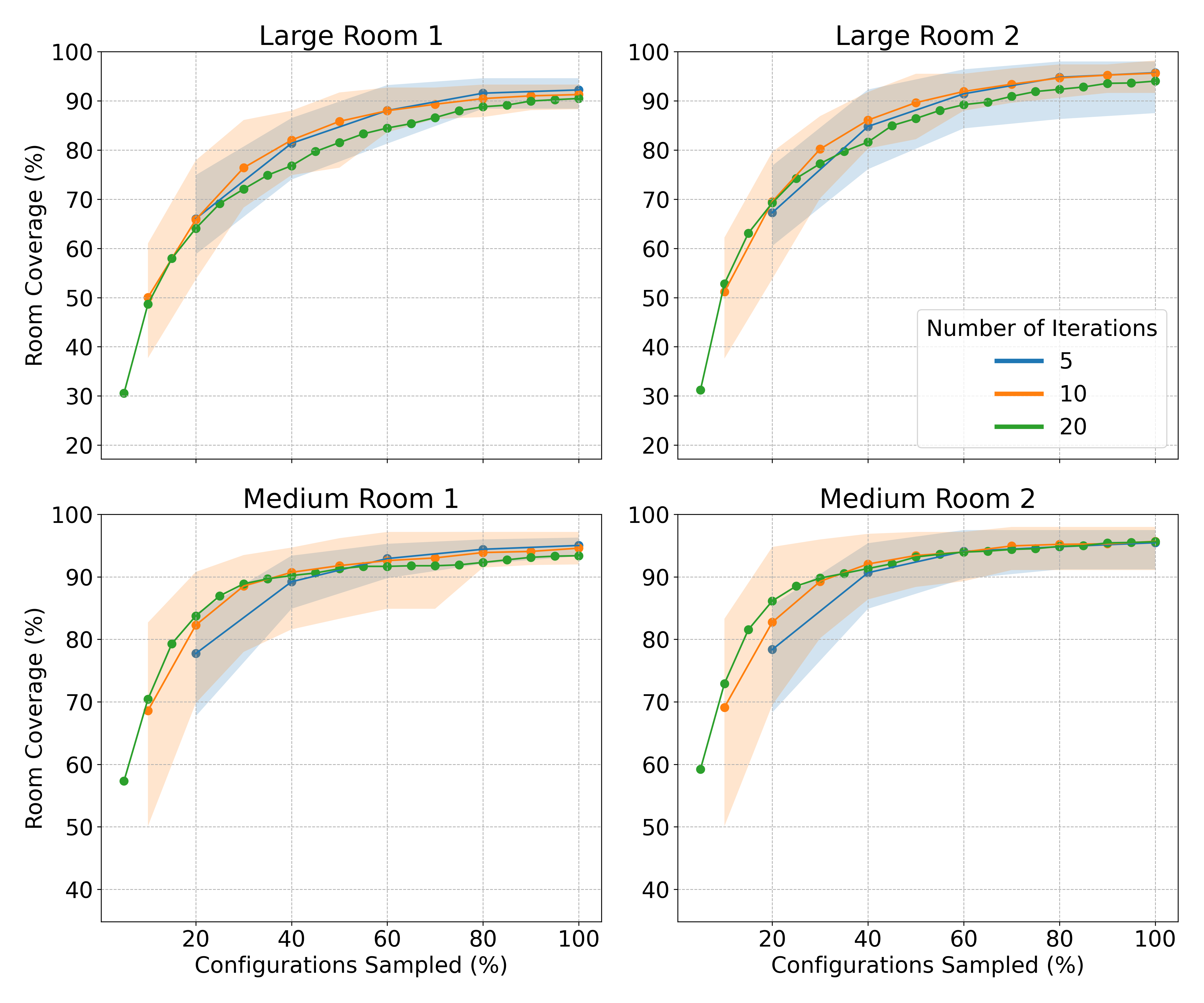}
    \caption{Room coverage as a function of configurations sampled, comparing different numbers of iterations. Shaded areas indicate coverage variability for 5 and 10 iterations. The 5-iteration run achieves the highest mean coverage for three out of four room types, but the 10-iteration run is preferred due to its tighter coverage range in large rooms, despite a near-tie in mean coverage}
    \label{figG4:ee-number-iterations-sensitivity}
\end{figure}

Figure \ref{figG4:ee-number-iterations-sensitivity} presents room coverage (y-axis) as a function of the total configurations sampled (x-axis). Different line plots correspond to varying numbers of total iterations, with the total number of camera configurations sampled held constant across runs. The shaded envelope around each line plot indicates the range of coverage outcomes generated by the algorithm when executed with different camera configurations. Notably, the envelope is constructed only for the 5-iteration and 10-iteration scenarios, as the 20-iteration run exhibits high variability and consistently lower mean coverage. While the mean coverage line for the 20-iteration run is included for completeness, it is not considered in selecting the optimal hyperparameter.

The mean coverage is highest with 5 iterations for three out of the four room types: 95.03\% for medium room type 1, 95.48\% for medium room type 2, and 95.71\% for large room type 2. For large room type 1, the mean coverage with 5 iterations is 92.21\%, placing it in second position. The mean coverage achieved with 10 iterations is very close to that of 5 iterations: 94.60\% for medium room type 1, 95.69\% for medium room type 2, 91.27\% for large room type 1, and 95.62\% for large room type 2. The difference between the two iteration counts is marginal, indicating a near tie in performance.

Despite the near-equal performance in terms of mean coverage, 10 iterations are chosen as the optimal setting. This decision is primarily based on the focus of the study on larger problems, where tighter coverage ranges are crucial. For large room type 1, the coverage range is slightly better for the 10-iteration runs at 5.0\% compared to the 5-iteration run at 6.2\%. However, for large room type 2, the coverage range is significantly tighter with 10 iterations (6.6\%) compared to 5 iterations (10.5\%). This reduction in variability makes 10 iterations the preferred choice for larger room scenarios.

\subsubsection{Sensitivity analysis: TUS strategy}

In this subsection, we evaluate the performance of the Target Uncovered Spaces strategy for optimal camera placement. This strategy aims to identify and target spaces that remain uncovered by the current camera configurations, thereby optimizing coverage and minimizing blind spots. We repeat the process used for the E\&E sampling strategy across various configurations and perform a sensitivity analysis on the hyperparameters to identify the optimal settings for the model.

\textbf{Uncovered search fraction:} The uncovered search fraction is a critical hyperparameter in the TUS strategy, as it determines the proportion of camera configurations specifically aimed at covering free spaces not yet covered by the optimally placed cameras from the previous iteration. The remaining camera configurations are generated randomly. A fraction value of 0 implies that all camera configurations are sampled randomly, with no targeted effort to cover previously uncovered spaces. We find that having a strict visibility requirement is detrimental as the coverage is worse than that of randomly sampled configurations. 

Our analysis indicates that this strategy is particularly effective when the resource budget is low and there are larger open spaces within the environment. To maximize its effectiveness, it is essential to deactivate the strict visibility requirement. This finding is supported by the analysis presented in Figure \ref{figG5:uncovered-search-fraction-sensitivity}, where the TUS strategy performs optimally under these conditions.

\begin{figure}[h]
    \centering
    \subfigure[]{\includegraphics[width=0.48\textwidth]{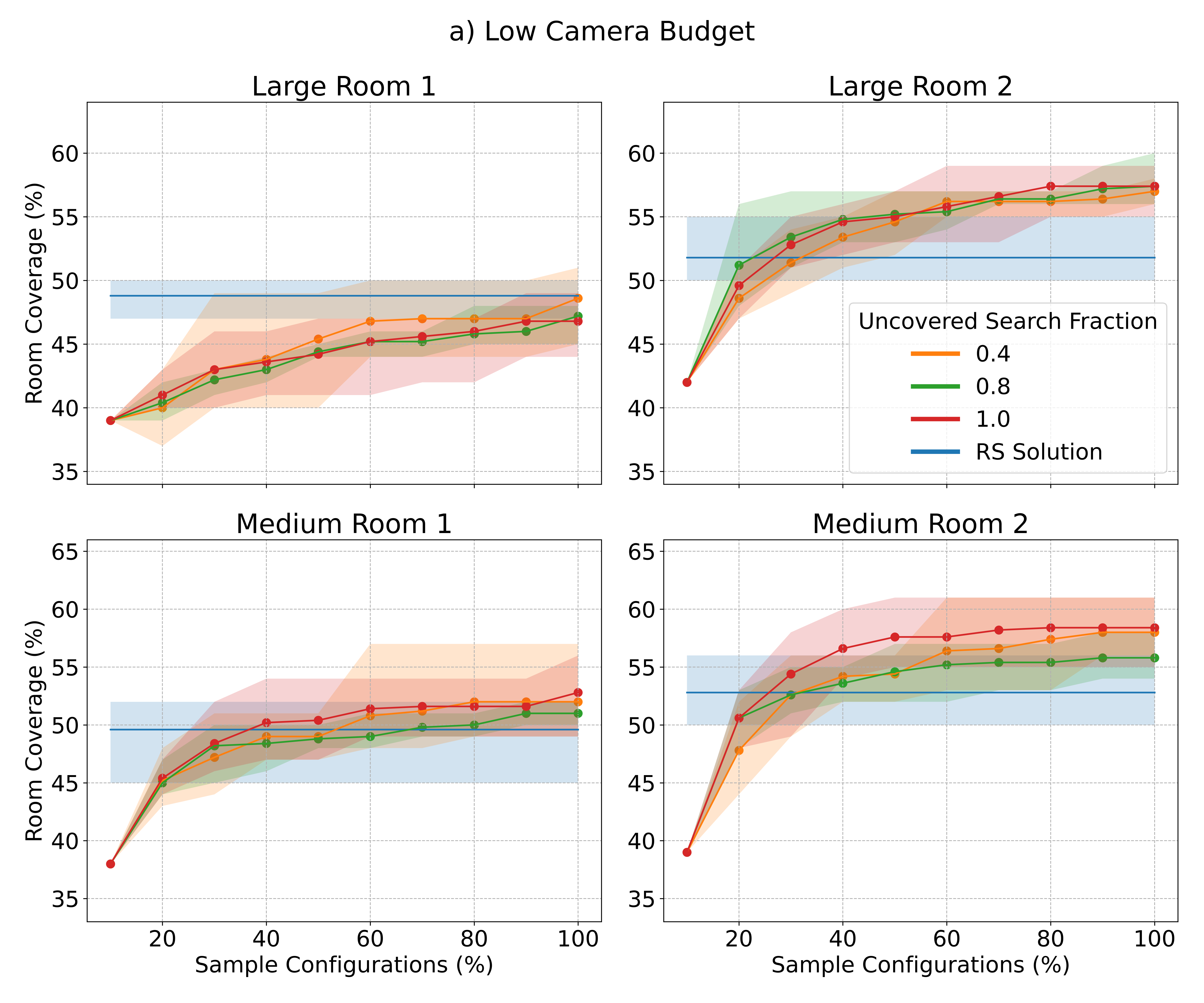} \label{figG5a}}
    \subfigure[]{\includegraphics[width=0.48\textwidth]{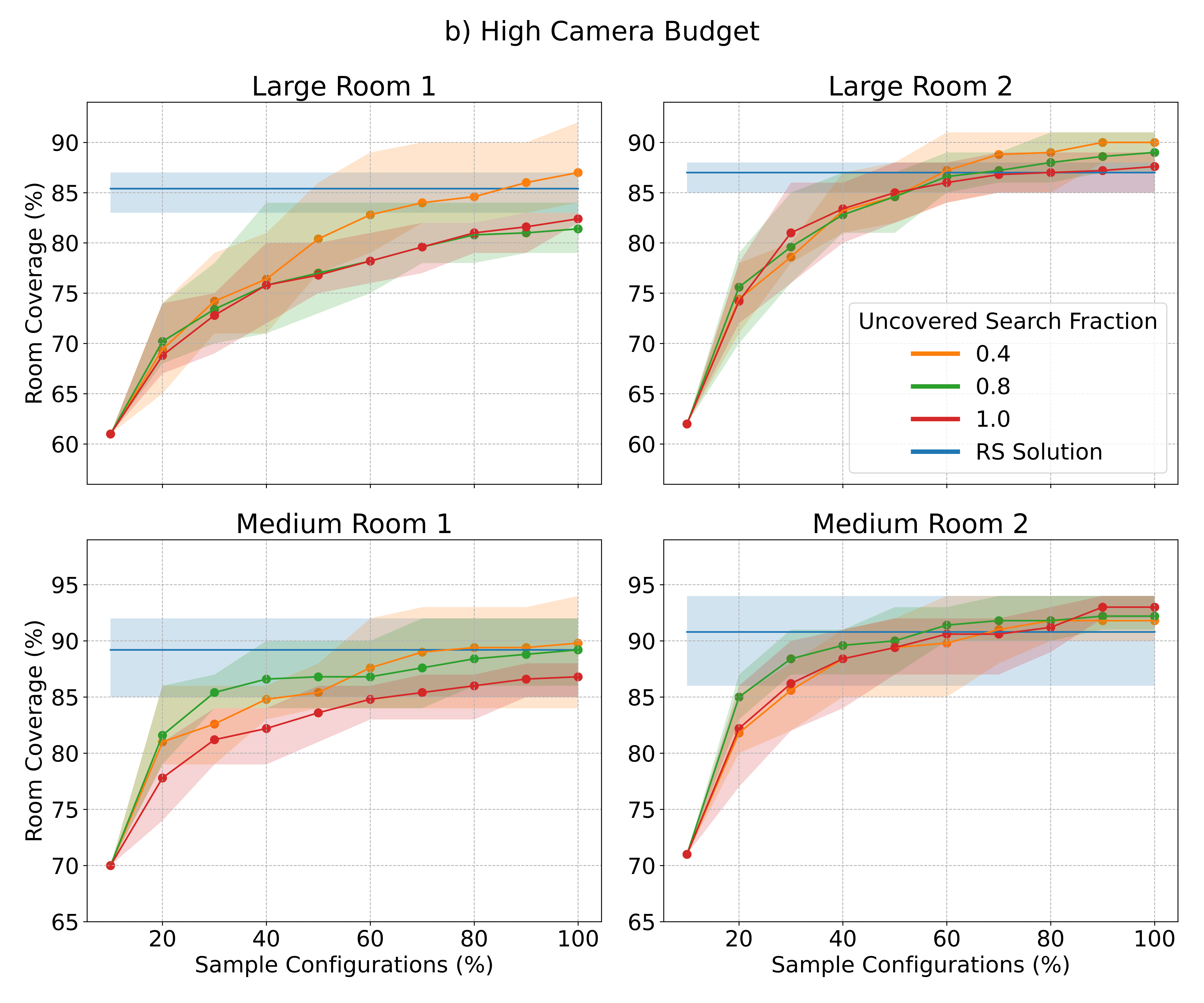} \label{figG5b}}
    \caption{This figure demonstrates that the TUS strategy is most effective with a low resource budget and larger open spaces, particularly when strict visibility requirements are deactivated. Part (a) shows the algorithm excelling with 50\% of the original resource budget in Large Room 2 and Medium Room 2, where unobstructed corridors are present. In contrast, in Large Room 1 and Medium Room 1, where views are more frequently obstructed, the algorithm performs similarly to the RS benchmark solution. Part (b) compares the algorithm's performance to the RS benchmark, showing no significant difference}
    \label{figG5:uncovered-search-fraction-sensitivity}
\end{figure}

Figure \yk{F}\ref{figG5a} illustrates that the algorithm excels when applied with a resource budget that is 50\% of what was previously allocated. This is especially evident in Large Room 2 and Medium Room 2, both characterized by walls that obstruct the same section of the room, leaving the other side unobstructed as a corridor. Conversely, in environments like Large Room 1 and Medium Room 1, where walls are positioned on alternate sides, obstructing views more frequently and resulting in no unobstructed common corridor, the algorithm performs comparably to the Mixed Integer Programming solution but does not significantly outperform it. This is when compared to Figure \yk{F}\ref{figG5b} where it is seen to be performing only as well as a random sampling algorithm. An uncovered search fraction of 0.4 performs better, marginally in some cases, than the RS benchmark solution in all cases, which is why we select it as the optimal hyperparameter value.

\textbf{Supervoxel size:} The algorithm relies on the concept of supervoxels to identify and target uncovered regions within an environment. Supervoxels are aggregations of smaller voxels, and their size plays a crucial role in determining the efficiency and effectiveness of the camera placement algorithm. In this sensitivity analysis, we evaluate the impact of different supervoxel sizes on the performance of the TUS strategy. Specifically, we examine four supervoxel sizes: 1, 3, 5, and 7, to determine the optimal granularity for balancing coverage accuracy and computational efficiency.

Based on the sensitivity analysis conducted, there does not seem to be a large difference between different supervoxel sizes (See Figure \ref{figG6:tus-supervoxel-sensitivity-fixed-low-budget}). The maximum difference in mean coverage is for medium-sized indoor space type 1, where the difference in the mean coverages between supervoxel sizes 3 (56\%)and 7 (59\%) is 3\%. The mean coverage is highest for sizes 7 (54.25\%) and sizes 5 (53.90\%), and the range between maximum and minimum coverage is lowest for sizes 5 (5.25\%) and size 7 (5.50\%). We normatively prescribe a supervoxel size of 5 (50\% of breadth).

\begin{figure}[h!]
    \centering
    \includegraphics[width=0.9\textwidth]{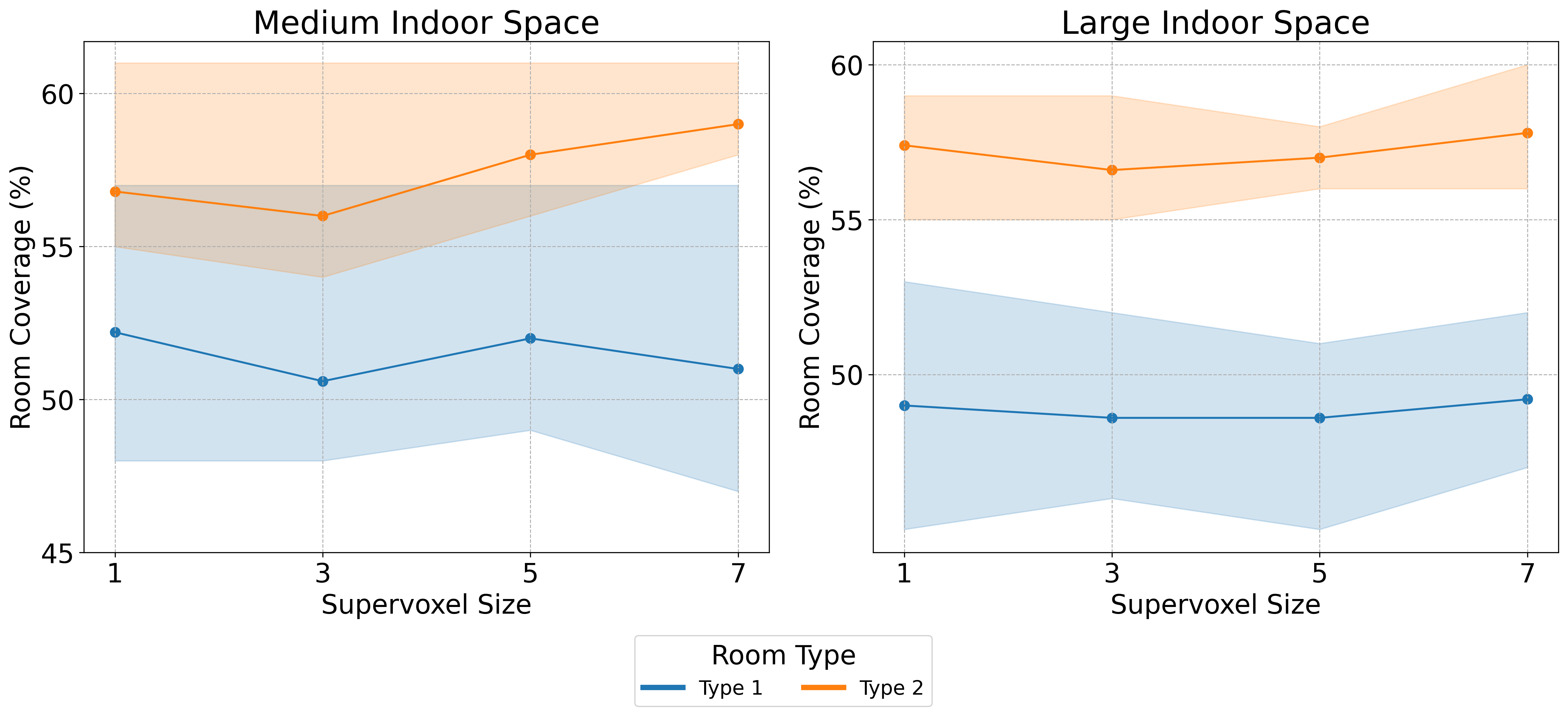}
    \caption{Sensitivity analysis of different supervoxel sizes (1, 3, 5, and 7) on the TUS strategy. The analysis shows minimal variation in mean coverage across sizes, with the largest difference being 3\% in medium-sized indoor space type 1. Supervoxel size 5 offers a good balance, with second to highest mean coverage (53.90\%) and the smallest range in coverage (5.25\%)}
    \label{figG6:tus-supervoxel-sensitivity-fixed-low-budget}
\end{figure}

\end{appendices}

\end{document}